\documentclass[12pt,reqno]{amsart}%
\usepackage[a4paper, margin=2.9cm]{geometry}
\usepackage{amsmath}
\usepackage{amsfonts}
\usepackage{amssymb}
\usepackage{graphicx}
\usepackage{xcolor}
\usepackage{multirow}%
\setcounter{MaxMatrixCols}{30}
\providecommand{\U}[1]{\protect\rule{.1in}{.1in}}
\newtheorem{theorem}{Theorem}
\newtheorem*{theorem*}{Theorem}

\newtheorem{corollary}[theorem]{Corollary}

\newtheorem{definition}[theorem]{Definition}

\newtheorem{lemma}[theorem]{Lemma}

\newtheorem{remark}[theorem]{Remark}

\allowdisplaybreaks
\begin{document}
\title[Euler--MacLaurin summation formula and Bernoulli polynomials]{Euler--MacLaurin summation formula on polytopes and expansions in multivariate
Bernoulli polynomials}
\author[L. Brandolini]{L. Brandolini$^1$}
\address{\phantom{i}$^1$Dipartimento di Ingegneria Gestionale, dell'Informazione e
della Produzione, Universit{\`a} degli Studi di Bergamo, Viale G. Marconi 5, 24044,
Dalmine BG, Italy}
\email{luca.brandolini@unibg.it}
\email{biancamaria.gariboldi@unibg.it}
\email{giacomo.gigante@unibg.it}
\email{alessandro.monguzzi@unibg.it}
\author[L. Colzani]{L. Colzani$^2$}
\address{\phantom{1}$^2$Dipartimento di Matematica e Applicazioni, Universit\`a degli
Studi di Milano Bicocca, Via R. Cozzi 55,  20125, Milano, Italy}
\email{leonardo.colzani@unimib.it}
\author[B. Gariboldi]{B. Gariboldi$^{1,2}$}
\author[G. Gigante]{G. Gigante$^1$}
\author[A. Monguzzi]{A. Monguzzi$^1$}
\thanks{\emph{Math Subject Classification 2020}: 11B68, 65B15, 42B05}
\thanks{The authors are members of Gruppo Nazionale per l'Analisi Matematica, la
Probabilit\`a e le loro Applicazioni (GNAMPA) of Istituto Nazionale di
Alta Matematica (INdAM)}
\thanks{The fifth author was partially supported by the Hellenic Foundation for Research and Innovation (H.F.R.I.) under the ``2nd Call for H.F.R.I. Research Projects to support Faculty Members \& Researchers'' (Project Number: 4662).}
\date{}
\keywords{Euler--MacLaurin summation formula, Bernoulli polynomials}

\begin{abstract}
We provide a multidimensional weighted Euler--MacLaurin summation formula on
polytopes and a multidimensional generalization of a result due to L. J.
Mordell on the series expansion in Bernoulli polynomials. These results are
consequences of a more general series expansion; namely, if $\chi
_{\tau\mathcal{P}}$ denotes the characteristic function of a dilated integer
convex polytope $\mathcal{P}$ and $q$ is a function with suitable
regularity, we prove that the periodization of $q\chi_{\tau\mathcal{P}}$
admits an expansion in terms of multivariate Bernoulli polynomials. These
multivariate polynomials are related to the Lerch Zeta function. In order to
prove our results we need to carefully study the asymptotic expansion of
$\widehat{q\chi_{\tau\mathcal{P}}}$, the Fourier transform of $q\chi
_{\tau\mathcal{P}}$.


\end{abstract}
\maketitle

\section{Introduction\label{intro}}

Our goal is to generalize to higher dimensions a result due to L.
J. Mordell and to deduce from this generalization a multidimensional weighted version of the classical Euler--MacLaurin formula and an associated quadrature rule. We first recall these results in the one dimensional setting. In order to do so we introduce the classical Bernoulli polynomials; there are
two possible normalizations that differ by a $n!$ factor and we use the following one.

\begin{definition}
The periodized Bernoulli polynomials $\{B_{n}\}_{n\in\mathbb{N }}$ are the
periodic functions that in the interval $(0,1)$ are defined recursively by the conditions
\begin{align*}
B_{0}(x)  &  =1,\qquad \dfrac{d}{dx}B_{n+1}(x)   =B_{n}(x),\qquad \int_{0}^{1}B_{n+1}(x)dx    =0.
\end{align*}
The value of these periodized functions when $x$ is an integer is given by%
\[
B_{n}(x) =\lim\limits_{\varepsilon\rightarrow0+}\frac{B_{n}(x+\varepsilon)
+B_{n}(x-\varepsilon) }{2}.
\]

\end{definition}

Mordell's theorem reads as follows.

\begin{theorem}
[Mordell 1966 \cite{Mor2, Mor}]\label{Thm Mordell-intro} Let $a<b$ and set
\[
\omega_{\lbrack a,b]}(x)=%
\begin{cases}
0 & x<a\text{ or }x>b\text{,}\\
1 & a<x<b\text{,}\\
1/2 & x=a\text{ or }x=b\text{.}%
\end{cases}
\]
(i) If $q\in C^{w+1}(\mathbb{R})$ then, for every $x\in\mathbb{R}$,
\[
\begin{split}
 \sum_{n=-\infty}^{+\infty}\omega_{\lbrack a,b]}(x+n)q(x+n)&\!=\!\!\int_{a}%
^{b}q(y)dy+\sum_{j=0}^{+w}\Big(  \dfrac{d^{j}q}{dx^{j}}(b)B_{j+1}%
(x-b)-\dfrac{d^{j}q}{dx^{j}}(a)B_{j+1}(x-a)\Big)\\
 &-\int_{a}^{b}\dfrac{d^{w+1}q}{dy^{w+1}%
 }(y)B_{w+1}(x-y)dy.
 \end{split}
\]
(ii) If $q\in C^{\infty}(\mathbb{R})$ and%
\[
\lim_{w\rightarrow+\infty}\left(  \frac{1}{2\pi}\right)  ^{w}\int_{a}%
^{b}\left\vert \dfrac{d^{w+1}q}{dy^{w+1}}(y)\right\vert dy=0,
\]
then, for every $x\in\mathbb{R}$,
\[
\sum_{n=-\infty}^{+\infty}\omega_{\lbrack a,b]}(x+n)q(x+n)\!=\!\int_{a}%
^{b}q(y)dy+\sum_{j=0}^{+\infty}\Big(  \dfrac{d^{j}q}{dx^{j}}(b)B_{j+1}%
(x-b)-\dfrac{d^{j}q}{dx^{j}}(a)B_{j+1}(x-a)\Big)  .
\]

\end{theorem}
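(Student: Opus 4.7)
My plan is to prove (i) by Fourier-analysing the $1$-periodic function
\[
F(x)=\sum_{n\in\mathbb{Z}}\omega_{[a,b]}(x+n)q(x+n),
\]
which is the periodization of the compactly supported function $f(y)=\omega_{[a,b]}(y)q(y)$. Its $m$-th Fourier coefficient is $\widehat{f}(m)=\int_a^b q(y)e^{-2\pi i m y}\,dy$, which for $m=0$ equals $\int_a^b q$. For $m\neq 0$, integrating by parts $w+1$ times (permissible since $q\in C^{w+1}$) yields
\[
\widehat{f}(m)=\sum_{j=0}^{w}\frac{q^{(j)}(a)e^{-2\pi i m a}-q^{(j)}(b)e^{-2\pi i m b}}{(2\pi i m)^{j+1}}+\frac{1}{(2\pi i m)^{w+1}}\int_a^b q^{(w+1)}(y)e^{-2\pi i m y}\,dy.
\]
I would then invoke the Fourier expansion $B_n(t)=-\sum_{m\neq 0}(2\pi i m)^{-n}e^{2\pi i m t}$ for $n\geq 1$, which is immediate from the recursive definition of $B_n$ in the statement. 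Summing $\widehat{f}(m)e^{2\pi i m x}$ over $m$ and using Fubini in the remainder term rearranges this into precisely the right-hand side of (i): the boundary sum comes out of the integrated terms and the integral of $q^{(w+1)}(y)B_{w+1}(x-y)$ comes out of interchanging the sum over $m$ with the $y$-integral.

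The delicate point is the pointwise convergence of this Fourier series to $F(x)$. For $w\geq 1$ the coefficients decay like $|m|^{-2}$, so the series converges absolutely and uniformly and the identity holds everywhere. For $w=0$ the coefficients are only $O(|m|^{-1})$; here I would invoke that $f$ has bounded variation and apply the Dirichlet--Jordan criterion, which yields pointwise convergence to the half-sum of one-sided limits at every point. That value coincides with $F(x)$ precisely because the paper's conventions $\omega_{[a,b]}(a)=\omega_{[a,b]}(b)=1/2$ and the averaging of periodized Bernoulli polynomials at integer discontinuities enforce the correct jump normalization. Justifying this convergence (and the accompanying Fubini step in the remainder term when $w=0$) is the main technical hurdle of (i); everything else is organized bookkeeping.

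Part (ii) then follows by letting $w\to\infty$ in (i). The remainder integral is bounded by
\[
\|B_{w+1}\|_\infty\int_a^b |q^{(w+1)}(y)|\,dy\leq \frac{2\zeta(w+1)}{(2\pi)^{w+1}}\int_a^b|q^{(w+1)}(y)|\,dy,
\]
where the sup-norm estimate on $B_{w+1}$ is immediate from its Fourier expansion together with $\sum_{m\neq 0}|m|^{-(w+1)}=2\zeta(w+1)$. Since $\zeta(w+1)\to 1$ and the standing hypothesis forces $(2\pi)^{-w}\int_a^b|q^{(w+1)}|\to 0$, the remainder vanishes and the infinite series in (ii) converges to $F(x)$.
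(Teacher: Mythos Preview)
Your overall plan --- Fourier-analyze the periodization, integrate $\widehat{f}(m)$ by parts, and identify the pieces as Fourier series of Bernoulli polynomials --- is exactly the paper's route, and your treatment of (ii) is correct. But there is a genuine error in your convergence analysis for (i). You assert that for $w\geq 1$ the Fourier coefficients $\widehat{f}(m)$ decay like $|m|^{-2}$ and the series converges absolutely. This is false: the function $f=\omega_{[a,b]}q$ has jump discontinuities at $a$ and $b$ whenever $q(a)\neq 0$ or $q(b)\neq 0$, so $\widehat{f}(m)$ is only $O(|m|^{-1})$, regardless of how smooth $q$ is. Your own integration-by-parts formula already displays this: the $j=0$ boundary term $(q(a)e^{-2\pi ima}-q(b)e^{-2\pi imb})/(2\pi im)$ is exactly of order $|m|^{-1}$ and does not disappear as $w$ increases --- indeed $\widehat{f}(m)$ does not depend on $w$ at all. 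Thus the series is only conditionally convergent for every $w$, the Dirichlet--Jordan argument is needed throughout, and the splitting of the series into its boundary pieces and the remainder must be justified via symmetric partial sums rather than by absolute convergence.

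The paper sidesteps all of this by mollification: it writes the left side as $\lim_{\varepsilon\to 0+}\sum_{k}\widehat{\varphi}(\varepsilon k)\widehat{Q}(k)e^{2\pi ikx}$ with a smooth radial bump $\varphi$, so that for each fixed $\varepsilon>0$ every sum is absolutely convergent and may be rearranged freely. The $\varepsilon\to 0$ limit on the left yields the weighted periodization (by the lemma identifying $\lim_{\varepsilon\to 0}\varphi_\varepsilon\ast Q$ with $\omega_{[a,b]}q$), and on the right the individual pieces converge to the regularized Bernoulli polynomials by their very definition. This device handles every $w$ and every point of discontinuity in one stroke, whereas your route, though salvageable, requires separate care at each stage.
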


Mordell's original result is essentially $(ii)$ above with $a=0$ and
$b=1$. Observe that the assumption on the growth of the derivatives of $q(x)$
implies that this function can be analytically extended to the entire complex
plane. The example $q( x) =\cos(2\pi x)$, which is $1$-periodic and has
expansion zero, shows that this assumption is sharp. Variants of this theorem
seem to be prior to Mordell's work (see e.g. \cite{BoasBuck}).

An immediate application of Theorem \ref{Thm Mordell-intro} is the classical
Euler-MacLaurin summation formula. Indeed, from $(i)$, when $a,b\in\mathbb{Z}$
and $x=0$, since $B_{j+1}(0)=0$ for even values of $j$, we obtain, for $q\in
C^{w+1}\left(  \mathbb{R}\right)  $,%
\begin{align*}
&  \bigg\vert \frac{1}{2}q(a)+q(a+1)+\ldots+q(b-1)+\frac{1}{2}q(b) \\
&  \qquad\qquad-  \int_{a}^{b}q(y)dy-\sum_{j=1}^{\left\lfloor \left(
w+1\right)  /2\right\rfloor }\left(  \dfrac{d^{2j-1}q}{dx^{2j-1}}%
(b)-\dfrac{d^{2j-1}q}{dx^{2j-1}}(a)\right)  B_{2j}(0)\bigg\vert \\
&  \leq\frac{\pi}{6}\left(  \frac{1}{2\pi}\right)  ^{w}\int_{a}^{b}\left\vert
\dfrac{d^{w+1}q}{dy^{w+1}}(y)\right\vert dy.
\end{align*}

It is well known that the above Euler-MacLaurin formula provides a quadrature
rule. Indeed, setting $a=0$, $b=N\in\mathbb Z^+$ and $q(x)=f\left(
x/N\right)  /N$ with $f\in C^{w+1}(\mathbb{R})$, one obtains%
\begin{align*}
\int_{0}^{1}f(y)dy  &  =\frac{1}{N}\left(  \frac{1}{2}f(0)+f\left(  \frac
{1}{N}\right)  +\ldots+f\left(  \frac{N-1}{N}\right)  +\frac{1}{2}f(1)\right)
\\
&\quad  +\sum_{j=1}^{\left\lfloor w/2\right\rfloor }\frac{1}{N^{2j}}\left(
\dfrac{d^{2j-1}f}{dx^{2j-1}}(0)-\dfrac{d^{2j-1}f}{dx^{2j-1}}(1)\right)
B_{2j}(0)+O(N^{-w-1}).
\end{align*}

Notice that only even powers of $N$ appear in the remainder terms.

To state our results in the multidimensional setting we need to introduce a number of definitions.

\begin{definition}
Let $\mathcal{P}$ be a measurable subset in $\mathbb{R}^{d}$. For every
$x\in\mathbb{R}^{d}$ the normalized solid angle at $x$ is given by
\[
\omega_{\mathcal{P}}(x)=\lim_{\varepsilon\rightarrow0+}\frac{1}{\left\vert
\left\{  |y|\leqslant1\right\}  \right\vert }\int_{|y|\leqslant1}%
\chi_{\mathcal{P}}(x-\varepsilon y)dy.
\]
Assuming that the above limit exists for every $x\in\mathbb{R}^{d}$, then, for every
continuous function $f\left(  x\right)  $ and for every positive integer $N$,
we set%
\[
S_{N}(f,\mathcal{P})=N^{-d}\sum_{n\in\mathbb{Z}^{d}}\omega_{\mathcal{P}%
}(N^{-1}n)f(N^{-1}n).
\]

\end{definition}

When $\mathcal{P}$ is a convex polytope (the convex hull of a finite number of
points) the weight $\omega_{\mathcal{P}}\left(  x\right)  $ is well defined for
every $x\in\mathbb{R}^{d}$. When $d=3$ the value of $\omega_{\mathcal{P}}(x)$
can be computed explicitly from the coordinates of the vertices of the
polytope using standard formulas of spherical trigonometry. See e.g.
\cite{Eriksson}. When $d>3$ see \cite{Aomoto}, \cite{BeckRobinsSam} and
\cite{Ribando}. \smallskip

These weights $\omega_{\mathcal{P}}(x)$ and weighted sums $S_{N}%
(q,\mathcal{P})$ are not new in the literature; for example MacDonald showed
that if $\mathcal{P}$ is a convex integer polytope (that is a convex polytope
with integer vertices) and $\tau$ is an integer dilation, then
\[
\sum_{n\in\mathbb{Z}^{d}}\omega_{\tau\mathcal{P}}(n)=({\mathrm{vol}%
}\mathcal{P})\tau^{d}+a_{d-2}\tau^{d-2}+\ldots+%
\begin{cases}
a_{1}\tau & \text{if $d$ is odd,}\\
a_{2}\tau^{2} & \text{if $d$ is even.}%
\end{cases}
\]
See e.g. \cite{BR} and \cite{DLR}.

An important property of these weights is that they are additive with respect
to $\mathcal{P}$. More precisely, if $\mathcal{P}_{1}$ and $\mathcal{P}_{2}$
have disjoint interior then%
\[
\omega_{\mathcal{P}_{1}\cup\mathcal{P}_{2}}(x)=\omega_{\mathcal{P}_{1}%
}(x)+\omega_{\mathcal{P}_{2}}(x).
\]
This implies that also the weighted Riemann sums $S_{N}(f,\mathcal{P})$ are
additive,
\[
S_{N}(f,\mathcal{P}_{1}\cup\mathcal{P}_{2})=S_{N}(f,\mathcal{P}_{1}%
)+S_{N}(f,\mathcal{P}_{2}).
\]
On the contrary, a different choice of weights may not guarantee the additivity.

\begin{definition}
\label{def:Bernoulli poly} For every multi-index of non-negative integers $J=(
j_{1},j_{2},...,j_{d}) $ and every $x=( x_{1},x_{2},\ldots,x_{d}) $ in
$\mathbb{R}^{d}$, define the multivariate Bernoulli polynomials
\[
B_{J}( x) =
\begin{cases}
B_{j_{1}}( x_{1}) B_{j_{2}}( x_{2}) \cdots B_{j_{d}}( x_{d}) &
\text{\textit{if }}0\leq x_{k}<1\text{\textit{,}}\\
0 & \text{\textit{otherwise.}}%
\end{cases}
\]
Moreover, for $L\in GL( d,\mathbb{Z}) $, define%
\[
B_{J,L}( x) =\vert L\vert^{-1}B_{J}\left(  ( L^{-1}) ^{t}x\right)  .
\]
Finally, define the periodized Bernoulli polynomials%
\[
\mathfrak{B}_{J,L}( x) =\sum_{n\in\mathbb{Z}^{d}}B_{J,L}( x+n) .
\]
At the points of discontinuity we assume the periodized Bernoulli
polynomials to be regularized so that%
\[
\mathfrak{B}_{J,L}( x) =\lim_{\varepsilon\to0+}\frac{1}{\left\vert \left\{
\vert y\vert\leqslant1\right\}  \right\vert }\int_{\vert y\vert\leqslant
1}\mathfrak{B}_{J,L}( x-\varepsilon y) dy.
\]

\end{definition}

We refer the reader to Section \ref{section-B} (Appendix B) for more comments on the construction of the periodized multivariate Bernoulli polynomials and their
connections with the Lerch Zeta functions.

\medskip

The next definitions are more technical and will be needed to describe the
asymptotic behavior along different directions of the Fourier transform of
functions supported on a simplex.

\begin{definition}
\label{bases-defn} For every dimension $d\geqslant1$, $\mathcal{F}^{(d)}$ is a
collection of $2^{d-1}$ bases of $\mathbb{R}^{d}$,%
\[
\mathcal{F}^{\left(  d\right)  }=\left\{  \mathcal{B}_{1}^{(d)},\ldots
,\mathcal{B}_{2^{d-1}}^{(d)}\right\}  .
\]
Each basis $\mathcal{B}_{j}^{(d)}$ consists of the vectors $b_{j,k}^{(d)}$,%
\[
\mathcal{B}_{j}^{(d)}=\left\{  b_{j,1}^{(d)},\dots,b_{j,d}^{(d)}\right\}  .
\]
The vectors $b_{j,k}^{(d)}$ are defined recursively as follows. If $d=1$, set
$b_{1,1}^{(1)}=1$. If $d=2$, set
\[
b_{1,1}^{(2)}=(1,0),~~b_{1,2}^{(2)}=(0,1)
\]
and
\[
b_{2,1}^{(2)}=(1,-1),~~b_{2,2}^{(2)}=(0,1).
\]
More generally, for $d\geqslant2$,%
\[
\mathcal{F}^{\left(  d\right)  }=\mathcal{F}_{1}^{\left(  d\right)  }%
\cup\mathcal{F}_{2}^{\left(  d\right)  }%
\]
where%
\[
\mathcal{F}_{1}^{\left(  d\right)  }=\left\{  \mathcal{B}_{1}^{(d)}%
,\ldots,\mathcal{B}_{2^{d-2}}^{(d)}\right\}  ,
\]%
\[
\mathcal{F}_{2}^{\left(  d\right)  }=\left\{  \mathcal{B}_{2^{d-2}+1}%
^{(d)},\ldots,\mathcal{B}_{2^{d-1}}^{(d)}\right\}
\]
and for $1\leqslant j\leqslant2^{d-2}$ we set%
\begin{align*}
b_{j,k}^{(d)}  &  =\left(  b_{j,k}^{(d-1)},0\right)  ,~~k=1,\ldots,d-1,\\
b_{j,d}^{(d)}  &  =(0,\ldots,0,1),
\end{align*}
and for $2^{d-2}+1\leqslant j\leqslant2^{d-1}$ we set%
\begin{align*}
b_{j,k}^{(d)}  &  =\left(  b_{j-2^{d-2},k}^{(d-1)},-b_{j-2^{d-2},k}%
^{(d-1)}\cdot\boldsymbol{1}_{d-1}\right)  ,~~k=1,\ldots,d-1,\\
b_{j,d}^{(d)}  &  =(0,\ldots,0,1),
\end{align*}
where $\boldsymbol{1}_{d-1}=\left(  1,\ldots,1\right)  \in\mathbb{R}^{d-1}$.
We will also associate to every basis $\mathcal{B\in F}^{\left(  d\right)  }$
with $d>1$ a $(d-1)$-dimensional multi-index $(v_{2},\ldots,v_{d})\in\left\{
1,2\right\}  ^{d-1}$ in the following way: $v_{d}=\ell$ if and only if
$\mathcal{B}\in\mathcal{F}_{\ell}^{(d)}$ and, if $d>2$, $(v_{2},\ldots
,v_{d-1})$ is the multi-index associated with the $(d-1)$-dimensional basis
$\mathcal{B}^{\prime}$ used to define $\mathcal{B}$ recursively. Observe that
there is a one to one correspondence between the bases in $\mathcal{F}_{\ell
}^{(d)}$ and the multi-indices in $\left\{  1,2\right\}  ^{d-1}$. Therefore,
given a multi-index $V=(v_{1},v_{2},\ldots,v_{d})\in\left\{  1,2\right\}
^{d}$, we will denote also by $\mathcal{B}_{V}$ the basis corresponding to the
vector $(v_{2},\ldots,v_{d})$.
\end{definition}

The role of $v_{1}$, the first component of the vector $V$, will be made clear
in what follows.

\begin{definition}
\label{lambda-defn} For every multi-index $V=(v_{1},\ldots,v_{d})\in\left\{
1,2\right\}  ^{d}$ we define the vectors $\lambda_{V}\in\mathbb{R}^{d}$
recursively as follows. For $d=1$%
\begin{align*}
\mathcal{\lambda}_{1}  &  =0,\\
\mathcal{\lambda}_{2}  &  =1.
\end{align*}
If $d=2$,%
\begin{align*}
\mathcal{\lambda}_{(1,1)}  &  =(0,0),\\
\mathcal{\lambda}_{(2,1)}  &  =(1,0),
\end{align*}
and%
\begin{align*}
\mathcal{\lambda}_{(1,2)}  &  =(0,1),\\
\mathcal{\lambda}_{(2,2)}  &  =(1,0).
\end{align*}
In general, for all $d\geqslant2$, if $v_{d}=1$ we set%
\[
\lambda_{(v_{1},v_{2},\ldots,v_{d})}=\left(  \lambda_{(v_{1},v_{2}%
,\ldots,v_{d-1})},0\right)  ,
\]
if $v_{d}=2$ we set%
\[
\lambda_{(v_{1},v_{2},\ldots,v_{d})}=\left(  \lambda_{(v_{1},v_{2}%
,\ldots,v_{d-1})},1-\lambda_{(v_{1},v_{2},\ldots,v_{d-1})}\cdot\mathbf{1}%
_{d-1}\right)  .
\]

\end{definition}

\medskip

\begin{center}
\begin{table}[h]%
\begin{tabular}
[c]{||c|r|r|l|l||}\hline
\rule{0pt}{3ex} $d$ & $V$ & $(v_{2},\ldots,v_{d})$ & $\mathcal{F}_{d}$ &
$\lambda_{V} $\\[0.5ex]\hline\hline
\rule{0pt}{4ex} 1 & 1 & / & $\mathcal{B}^{(1)}_{1}=\{1\}$ & $0$\\
\phantom{s} & 2 & \phantom{s} & \phantom{s} & $1$\\\hline\hline
\rule{0pt}{4ex} 2 & $(1,1)$ & $1$ & $\mathcal{B}^{(2)}_{1}=\{(1,0),(0,1)\}$ &
$(0,0)$\\
\phantom{s} & (2,1) &  & \phantom{s} & $(1,0)$\\\hline
\rule{0pt}{4ex} \phantom{s} & $(1,2)$ & $2$ & $\mathcal{B}^{(2)}%
_{2}=\{(1,-1),(0,1)\}$ & $(0,1)$\\
\phantom{s} & $(2,2)$ &  & \phantom{s} & $(1,0)$\\[1ex]\hline\hline
\rule{0pt}{4ex} 3 & $(1,1,1)$ & $(1,1)$ & $\mathcal{B}^{(3)}_{1}%
=\{(1,0,0),(0,1,0),(0,0,1)\}$ & $(0,0,0)$\\
\phantom{s} & $(2,1,1)$ &  & \phantom{s} & $(1,0,0)$\\
\phantom{s} & $(1,2,1)$ & $(2,1)$ & $\mathcal{B}^{(3)}_{2}%
=\{(1,-1,0),(0,1,0),(0,0,1)\}$ & $(0,1,0)$\\
\phantom{s} & $(2,2,1)$ &  & \phantom{s} & $(1,0,0)$\\\hline
\rule{0pt}{4ex} \phantom{s} & $(1,1,2)$ & $(1,2)$ & $\mathcal{B}^{(3)}%
_{3}=\{(1,0,-1),(0,1,-1),(0,0,1)\}$ & $(0,0,1)$\\
\phantom{s} & $(2,1,2)$ &  & \phantom{s} & $(1,0,0)$\\
\phantom{s} & $(1,2,2)$ & $(2,2)$ & $\mathcal{B}^{(3)}_{4}%
=\{(1,-1,0),(0,1,-1),(0,0,1)\}$ & $(0,1,0)$\\
\phantom{s} & $(2,2,2)$ &  & \phantom{s} & $(1,0,0)$\\[1ex]\hline
\end{tabular}
\medskip\caption{The various bases and multi-indices of Definitions
\ref{bases-defn} and \ref{lambda-defn}, for the dimensions $d=1,2,3$.}%
\label{ciao}%
\end{table}
\end{center}

We state our main result. Let $S_{d}\subseteq\mathbb{R}^{d}$ be the standard
simplex given by
\[
S_{d}=\left\{  x\in\mathbb{R}^{d}:x_{j}\geqslant0,\quad\sum_{j=1}^{d}%
x_{j}\leqslant1\right\}  .
\]

\begin{theorem}
\label{main-thm}Let $\mathcal{P}$ be a simplex in $\mathbb{R}^{d}$ with
vertices $\mathbf{0},\mathbf{m}_{1},\ldots,\mathbf{m}_{d}\in\mathbb{Z}^{d}$,
and let $M\in GL(d,\mathbb{Z})$ be the $d\times d$ matrix with columns
$\mathbf{m}_{1},\mathbf{m}_{2},\ldots,\mathbf{m}_{d}$, which maps the standard
simplex onto $\mathcal{P}$. Let $q\in C^{w+1}(\mathbb{R}^{d})$ with
$w\in\mathbb{N}$ and for $\tau>0$, let $q_{\tau,M}(x)=q(\tau Mx)$. Then, for
every $x\in\mathbb{R}^{d}$ and for every $\tau>0$,%
\begin{align*}
&  \sum_{n\in\mathbb{Z}^{d}}\omega_{\tau\mathcal{P}}(x+n)q(x+n)\\
=  &  \det(M)\sum_{V\in\{1,2\}^{d}}\sum_{I\in\{0,1\}^{d}}\sum_{|J|\leqslant
w,J\sqsubseteq I}\,\tau^{d-|I|-|J|}\left\langle \mu(V,I,J),q_{\tau
,M}\right\rangle \,\mathfrak{B}_{J+I,(MD_{V})^{t}}(x-\tau M\lambda_{V})\\
&  +\mathcal{R}_{w}(x).
\end{align*}
Here $\mu(V,I,J)$ are certain integro-differential functionals that will be
introduced in Definition \ref{Def mu}, $D_{V}=\left[  b_{1}|b_{2}|\cdots
|b_{d}\right]  $ where $\{b_{1},b_{2},\ldots,b_{d}\}$ is the basis
$\mathcal{B}_{V}$ and $J\sqsubseteq I$ means that $j_{k}=0$ if $i_{k}=0$.
Moreover, for every $\delta>0$ and every $\tau_{0}>0$ there exists a constant
$c$ depending on $\delta$ and $\tau_{0}$ but independent of $q$, $M$ and $w$,
such that for every $\tau>\tau_{0}$
\[
\left\vert \mathcal{R}_{w}(x)\right\vert \leqslant c\det(M)\tau^{d-w-1}%
(2^{d-2}\pi^{-1}+\delta)^{w+1}\sup_{w-d+2\leqslant|\alpha|\leqslant w+1}%
\sup_{x\in S_{d}}\left\vert \frac{\partial^{\alpha}q_{\tau, M}}{\partial
x^{\alpha}}(x)\right\vert .
\]

\end{theorem}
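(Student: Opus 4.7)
The plan is to compute the right-hand side as the Fourier series of the left-hand side, viewed as a $\mathbb{Z}^d$-periodic function of $x$. By Poisson summation,
\[
\sum_{n\in\mathbb{Z}^{d}}\omega_{\tau\mathcal{P}}(x+n)q(x+n)=\sum_{k\in\mathbb{Z}^{d}}\widehat{\omega_{\tau\mathcal{P}}\,q}(k)\,e^{2\pi i k\cdot x},
\]
where the normalized solid angle weights at $\partial(\tau\mathcal{P})$ are precisely what makes this pointwise identity valid across jumps. The theorem thereby reduces to producing an asymptotic expansion of $\widehat{\omega_{\tau\mathcal{P}}\,q}(\xi)$ with a short list of explicit leading terms and a quantitatively controlled remainder; summing these leading terms over $k$ then recovers the periodized multivariate Bernoulli polynomials $\mathfrak{B}_{J+I,(MD_V)^{t}}(x-\tau M\lambda_V)$.

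The first step is to pull back to the standard simplex via $y=\tau M z$, giving
\[
\widehat{\omega_{\tau\mathcal{P}}\,q}(\xi)=\tau^{d}\det(M)\,\widehat{\omega_{S_{d}}\,q_{\tau,M}}(\tau M^{t}\xi),
\]
so that the core estimate is the asymptotic expansion of the Fourier transform of $\omega_{S_{d}}\,g$ for smooth $g$. I would proceed by repeated integration by parts on $S_d$. A single integration by parts in an axis direction produces boundary contributions on two faces: a coordinate face $\{x_j=0\}$, canonically isomorphic to $S_{d-1}$; and the slanted face $\{\sum x_j=1\}$, isomorphic to $S_{d-1}$ via $x_d=1-x_1-\cdots-x_{d-1}$. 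These two branches correspond exactly to the subfamilies $\mathcal{F}_{1}^{(d)}$ and $\mathcal{F}_{2}^{(d)}$ of Definition \ref{bases-defn}: the $-\boldsymbol{1}_{d-1}$ correction in $b_{j,d}^{(d)}$ encodes the Jacobian of the slanted-face substitution, and the matrix $D_V$ is the composition of these Jacobians along the sequence of face choices encoded in $V$. After the recursion terminates, the indicator is reduced to a point evaluation at one of the vertices $\lambda_V$; the multi-index $I\in\{0,1\}^d$ records whether each recursive level was followed to the end, $J\sqsubseteq I$ records which derivative of $q_{\tau,M}$ survives at that vertex, and the integro-differential functional $\mu(V,I,J)$ (to be defined) collects the resulting restrictions and integrations.

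Summing the Fourier symbols over $k\in\mathbb{Z}^d$ then matches each vertex contribution to the Fourier series of the corresponding $\mathfrak{B}_{J+I,(MD_V)^{t}}(x-\tau M\lambda_V)$: the $(MD_V)^{t}$-twist is exactly what appears when the $k$-th integration-by-parts symbol carries a denominator depending on $(MD_V)^{-t}k$, which is precisely the Lerch-type Fourier expansion of $\mathfrak{B}$ worked out in Appendix B. The remainder $\mathcal{R}_w$ collects terms where $w+1$ integrations by parts have been performed; each such step costs a factor $(2\pi)^{-1}$, the two-way recursive branching produces at most $2^{d-1}$ terminal bases (with $2^{d-2}$ controlling the worst-case face-branching at the last level), and ensuring absolute convergence of the Fourier series of the remainder over $\mathbb{Z}^d$ forces several integrations by parts in each direction, which is the source of the range $w-d+2\leqslant|\alpha|\leqslant w+1$ in the supremum. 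The main obstacle will be the combinatorial bookkeeping of this recursion: verifying that the triples $(V,I,J)$ with $J\sqsubseteq I$ and $|J|\leqslant w$ parametrize the leading terms with neither overcounting nor gaps (noting in particular that one geometric vertex of $\tau\mathcal{P}$ can arise as $\lambda_V$ for several $V$, corresponding to distinct integration-by-parts routes to that vertex), and then extracting the sharp constant $2^{d-2}\pi^{-1}$ in the remainder bound uniformly in $x$, $\tau$, $M$ and $w$.
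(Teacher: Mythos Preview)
Your overall strategy---Poisson summation followed by an asymptotic expansion of $\widehat{q\chi_{\tau\mathcal{P}}}$ obtained by iterated one-dimensional integration by parts on $S_d$, then recognizing the resulting lattice sums as the Fourier expansions of the $\mathfrak{B}_{J+I,(MD_V)^t}$---is exactly the paper's. Your reading of $V$, $D_V$, $\lambda_V$ and the two-way branching $\mathcal{F}_1^{(d)}\cup\mathcal{F}_2^{(d)}$ is correct.

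There is, however, a real gap in how you interpret $I$ and in how the remainder is controlled. The expansion of $\widehat{g\chi_{S_d}}(\xi)$ is \emph{not} uniform in $\xi$: when $b_k\cdot\xi=0$ for some basis vector $b_k\in\mathcal{B}_V$, the corresponding integration by parts is unavailable, and the operator at that step is an \emph{integral} (the $T_h^{1,0,0}$ of Definition~\ref{Def mu}), not a boundary restriction. Hence $I$ is not a free index recording ``whether each recursive level was followed to the end''; it is forced by the direction of $\xi$, namely $I=I_{V,\theta}$ where $\theta\in\Theta_d$ labels the cone $\mathcal{Q}(\theta)$ containing $\xi$ (Lemma~\ref{Lemma trasf simplesso standard}). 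In particular $\mu(V,I,J)$ is a distribution supported on a \emph{face} of $S_d$ of dimension $d-|I|$, not always at a vertex. The paper therefore first partitions $\mathbb{Z}^d$ into the cones $\mathcal{Q}_M(M\theta)$, expands on each cone, and then uses the combinatorial identity $\bigcup_{\theta:\,I_{V,\theta}=I}\bigl(\mathbb{Z}^d\cap\mathcal{Q}_M(M\theta)\bigr)=\Delta\bigl(I,(MD_V)^t\bigr)$ (Lemma~\ref{Lemma unione}) to reassemble the pieces into exactly the Fourier support of $\mathfrak{B}_{J+I,(MD_V)^t}$. Without this cone decomposition your leading terms would carry denominators $(2\pi i b_k\cdot\xi)^{j_k+1}$ that vanish on positive-codimension sublattices, and the matching with the Bernoulli polynomials would not close.

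The same issue undermines your remainder argument. The paper does \emph{not} force absolute summability of $\mathcal{R}_{\theta,w}(q_{\tau,M},\tau M^t n)$ over $\mathbb{Z}^d$ by extra integrations by parts in every direction; that would cost an extra factor growing with $d$ and would not yield the constant $2^{d-2}\pi^{-1}+\delta$ uniformly in $w$. Instead it shows, cone by cone and by induction on $d$, that $\{\chi_{\mathcal{Q}(\theta)}(n)\mathcal{R}_{\theta,w}(g,\tau n)\}_n$ are the Fourier coefficients of a \emph{bounded} function on $\mathbb{T}^d$ (Lemma~\ref{Lemma trasf simplesso standard}(iii)), using that restriction of Fourier support to a subgroup is a contraction on $L^\infty$ (Lemma~\ref{Lemma Q(theta)} and Lemma~\ref{annichilatore}). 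The range $w-d+2\le|\alpha|\le w+1$ arises from the $d$-step induction (one derivative lost per dimension), not from integrability in $\mathbb{Z}^d$. You will need this machinery---or an equivalent---to get the stated bound.
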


For $d=2$ a similar formula is contained in \cite{BCRT}. An immediate
consequence is the following corollary.

\begin{corollary}
\label{Coroll3} With the above notation, assume that $q\in C^{\infty
}(\mathbb{R}^{d})$ and that for some $c,\delta>0$,%
\[
\sup_{\vert\alpha\vert=w}\sup_{x\in S_{d}}\left\vert \frac{\partial^{\alpha
}q_{\tau, M}}{\partial x^{\alpha}}( x) \right\vert \leqslant c\tau^{w}(
2^{d-2}\pi^{-1}+\delta) ^{-w}.
\]
Then $\sum_{n\in\mathbb{Z}^{d}}\omega_{\tau\mathcal{P}}( x+n) q( x+n) $ can be
expanded in a uniformly convergent series of Bernoulli polynomials%
\begin{align*}
&  \sum_{n\in\mathbb{Z}^{d}}\omega_{\tau\mathcal{P}}( x+n) q( x+n)\\
=  &  \det(M) \sum_{V\in\{ 1,2\} ^{d}}\sum_{I\in\{ 0,1\} ^{d}}\sum_{\vert
J\vert\geq0,J\sqsubseteq I}\,\tau^{d-|I|-|J|}\left\langle \mu(V,I,J),q_{\tau,
M}\right\rangle \,\mathfrak{B}_{J+I,(MD_{V})^{t}}(x-\tau M\lambda_{V}).
\end{align*}

\end{corollary}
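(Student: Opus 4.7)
The plan is to derive the corollary directly from Theorem \ref{main-thm} by letting the truncation parameter $w$ tend to infinity and showing that, under the stronger decay hypothesis on the derivatives of $q_{\tau,M}$, the remainder $\mathcal{R}_{w}(x)$ tends to zero uniformly in $x$. Since the finite sum on the right hand side of Theorem \ref{main-thm} is exactly the partial sum of the series in the corollary truncated at $|J|\leq w$, uniform convergence of the remainder to zero automatically yields uniform convergence of the infinite series to the left hand side.

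Fix $\tau>\tau_{0}$ and let $\delta>0$ be the constant given by the hypothesis of the corollary. Apply Theorem \ref{main-thm} with its free parameter chosen equal to $\delta/2$, and set $R=2^{d-2}\pi^{-1}+\delta/2$, $R'=2^{d-2}\pi^{-1}+\delta$, so that $R/R'<1$. The remainder estimate of the theorem then reads
\[
|\mathcal{R}_{w}(x)|\leq c\det(M)\,\tau^{d-w-1}R^{w+1}\sup_{w-d+2\leq|\alpha|\leq w+1}\sup_{y\in S_{d}}\left|\frac{\partial^{\alpha}q_{\tau,M}}{\partial y^{\alpha}}(y)\right|.
\]
Inserting the hypothesis $\sup_{y\in S_{d}}|\partial^{\alpha}q_{\tau,M}(y)|\leq c\,\tau^{|\alpha|}R'^{-|\alpha|}$ for each admissible $\alpha$, and writing $k=w+1-|\alpha|\in\{0,1,\ldots,d-1\}$, the mixed powers of $\tau$ and the ratio $R/R'$ collapse to
\[
|\mathcal{R}_{w}(x)|\leq c'\det(M)\,(R/R')^{w+1}\sup_{0\leq k\leq d-1}\tau^{d-k}R'^{k}.
\]
For fixed $\tau$ and $d$ the supremum on the right is a finite constant independent of $w$, while $(R/R')^{w+1}\to 0$ as $w\to\infty$ because $R/R'<1$. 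Hence $\mathcal{R}_{w}(x)\to 0$ uniformly in $x\in\mathbb{R}^{d}$.

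Denoting by $\Sigma_{w}(x)$ the multiple sum in the statement of Theorem \ref{main-thm}, the identity
\[
\sum_{n\in\mathbb{Z}^{d}}\omega_{\tau\mathcal{P}}(x+n)q(x+n)-\Sigma_{w}(x)=\mathcal{R}_{w}(x)
\]
together with the previous bound shows that $\Sigma_{w}(x)$ converges uniformly in $x$, as $w\to\infty$, to $\sum_{n\in\mathbb{Z}^{d}}\omega_{\tau\mathcal{P}}(x+n)q(x+n)$. Since $\Sigma_{w}(x)$ is precisely the partial sum of the series in the corollary restricted to $|J|\leq w$, this gives the claimed uniformly convergent Bernoulli expansion.

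The only substantive point requiring attention is the matching of the geometric constants: one must exploit the freedom in the choice of the parameter $\delta$ in Theorem \ref{main-thm} to make it strictly smaller than the $\delta$ prescribed by the hypothesis, producing the strictly decaying ratio $R/R'<1$. All remaining manipulations are routine bookkeeping of the powers of $\tau$ over the range $|\alpha|\in[w-d+2,w+1]$, a range of length $d-1$ that contributes only a $d$-dependent constant to the final estimate.
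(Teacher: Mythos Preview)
Your argument is correct and is exactly the intended derivation: the paper presents Corollary~\ref{Coroll3} as ``an immediate consequence'' of Theorem~\ref{main-thm} without spelling out the details, and your proof supplies precisely the missing bookkeeping---choosing the free $\delta$-parameter in the theorem strictly smaller than the one in the hypothesis so that the resulting geometric ratio $R/R'<1$ forces $\mathcal{R}_w(x)\to 0$ uniformly.
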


The uniform convergence in the above corollary seems paradoxical, since the
periodized function in the left-hand side is a priori discontinuous, but
observe that also in the right-hand side there are a priori infinitely many
Bernoulli polynomials that are discontinuous.

\smallskip Taking $x=0$ and $\tau\in\mathbb Z$, since the functions
$\mathfrak{B}_{J,L}(x)$ are periodic, from Theorem \ref{main-thm} one
immediately obtains an Euler--MacLaurin formula.

\begin{theorem}
\label{EM-d} Let $\mathcal{P}$ be a simplex in $\mathbb{R}^{d}$ with vertices
$\mathbf{0},\mathbf{m}_{1},\ldots,\mathbf{m}_{d}\in\mathbb{Z}^{d}$, and let
$M\in GL(d,\mathbb{Z})$ be the $d\times d$ matrix with columns $\mathbf{m}%
_{1},\mathbf{m}_{2},\ldots,\mathbf{m}_{d}$, which maps the standard simplex
onto $\mathcal{P}$. Let $q\in C^{w+1}(\mathbb{R}^{d})$ with $w\in\mathbb{N}$
and for $\tau>0$, let $q_{\tau, M}(x)=q(\tau Mx)$. Then, for every positive
integer $\tau>0$,
\begin{align*}
&  \sum_{n\in\mathbb{Z}^{d}}\omega_{\tau\mathcal{P}}(n)q(n)\\
=  &  \det(M)\sum_{V\in\{1,2\}^{d}}\sum_{I\in\{0,1\}^{d}}\sum_{|J|\leqslant
w,J\sqsubseteq I}\,\tau^{d-|I|-|J|}\left\langle \mu(V,I,J),q_{\tau
,M}\right\rangle \,\mathfrak{B}_{J+I,(MD_{V})^{t}}(0)+\mathcal{R}_{w}.
\end{align*}
Moreover, for every $\delta>0$ and every $\tau_{0}>0$ there exists a constant
$c$ depending on $\delta$ and $\tau_{0}$ but independent of $q$, $M$ and $w$,
such that for every $\tau>\tau_{0}$,%
\[
\left\vert \mathcal{R}_{w}\right\vert \leqslant c\det(M)\tau^{d-w-1}%
(2^{d-2}\pi^{-1}+\delta)^{w+1}\sup_{w-d+2\leqslant|\alpha|\leqslant w+1}%
\sup_{x\in S_{d}}\left\vert \frac{\partial^{\alpha}q_{\tau,M}}{\partial
x^{\alpha}}(x)\right\vert .
\]

\end{theorem}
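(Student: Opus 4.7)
The plan is to obtain Theorem \ref{EM-d} as a direct specialization of Theorem \ref{main-thm} at $x=0$ with $\tau\in\mathbb{Z}^+$. Setting $x=0$ in the main theorem, the left-hand side collapses to $\sum_{n\in\mathbb{Z}^d}\omega_{\tau\mathcal{P}}(n)q(n)$, and the remainder term $\mathcal{R}_w(0)$ inherits the bound already proved in Theorem \ref{main-thm} without modification. The only point requiring attention is that on the right-hand side the Bernoulli evaluations appear as $\mathfrak{B}_{J+I,(MD_V)^t}(-\tau M\lambda_V)$, and one must show these can be replaced by $\mathfrak{B}_{J+I,(MD_V)^t}(0)$.

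This replacement rests on two observations. First, by Definition \ref{def:Bernoulli poly}, the function $\mathfrak{B}_{J+I,L}$ is defined as the $\mathbb{Z}^d$-periodization of $B_{J+I,L}$, so it is invariant under translations by arbitrary vectors in $\mathbb{Z}^d$, regardless of the matrix $L$ appearing in the subscript (the regularization at discontinuity points is also translation-invariant). Second, one must check that $\tau M\lambda_V \in \mathbb{Z}^d$ whenever $\tau$ is a positive integer and $M\in GL(d,\mathbb{Z})$. Since $M$ has integer entries, this reduces to verifying $\lambda_V\in\mathbb{Z}^d$, and in fact $\lambda_V\in\{0,1\}^d$.

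The latter claim follows by a short induction on $d$ using Definition \ref{lambda-defn}. The base $d=1$ gives $\lambda_1=0$ and $\lambda_2=1$. For the inductive step, if $v_d=1$ then a zero is appended, preserving the property. If $v_d=2$, a component $1-\lambda_{(v_1,\ldots,v_{d-1})}\cdot\mathbf{1}_{d-1}$ is appended; by strengthening the inductive hypothesis to the statement that $\lambda_{(v_1,\ldots,v_{d-1})}$ has at most one entry equal to $1$, the inner product is either $0$ or $1$, so the appended entry lies in $\{0,1\}$ and the ``at most one $1$'' property is preserved (the count of $1$'s is $0\mapsto 1$ or $1\mapsto 0$).

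Combining these observations, $\mathfrak{B}_{J+I,(MD_V)^t}(-\tau M\lambda_V)=\mathfrak{B}_{J+I,(MD_V)^t}(0)$ for each $V$, $I$, $J$ appearing in the sum, and the expansion in Theorem \ref{main-thm} reduces to the claimed Euler--MacLaurin formula. There is essentially no technical obstacle here; the entire argument is a periodicity observation combined with the inductive verification that $\lambda_V\in\{0,1\}^d$, which is the only step that is not immediate from the statement of Theorem \ref{main-thm}.
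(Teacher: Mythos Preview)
Your proposal is correct and follows essentially the same approach as the paper, which simply states that Theorem \ref{EM-d} follows from Theorem \ref{main-thm} by taking $x=0$, $\tau\in\mathbb{Z}$, and invoking the $\mathbb{Z}^d$-periodicity of $\mathfrak{B}_{J,L}$. Your explicit inductive verification that $\lambda_V\in\{0,1\}^d$ (hence $\tau M\lambda_V\in\mathbb{Z}^d$) fills in a detail the paper leaves implicit.
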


We will see that when $I=(0,\ldots,0)$ the only non-vanishing term in the above sum corresponds to $V=(1,\ldots,1)$ and is the integral of $q$
over $\tau\mathcal{P}$.

\smallskip Similarly to the one dimensional case, Theorem \ref{EM-d} applied to the
function $N^{-d}f(x/N)$ gives a quadrature formula for simplices. Then, the
additivity of the weighted Riemann sums allows to extend this quadrature
formula to more general settings. Let us recall that a homogeneous simplicial
$d$-complex is a simplicial complex where every simplex of dimension less than
$d$ is a face of some simplex of dimension $d$. It is known that every
(bounded) convex polytope can be decomposed into simplices without additional
vertices. Hence, one can associate to a convex polytope a homogeneous
simplicial complex with the same vertices. This is obvious in dimension $d=2$,
less obvious in higher dimensions (see \cite{Edmonds}, see also Proposition
5.2 and Theorem 5.3 in \cite[Chapter 5]{RTR}).

\begin{theorem}
\label{Thm 1}Let $\mathcal{P}$ be a homogeneous simplicial $d$-complex with
integer vertices in $\mathbb{R}^{d}$. Let $w$ be a non-negative integer and
let $f\in C^{w+1}(\mathbb{R}^{d})$. Then, there exists a numerical sequence
$\left\{  \gamma_{k}\right\}  _{0<k\leqslant w/2}$ such that for every
positive integer $N$ we have%
\[
S_{N}(f,\mathcal{P})=\int_{\mathcal{P}}f(x)dx+\sum_{0<k\leqslant w/2}%
\gamma_{k}N^{-2k}+O(N^{-w-1}).
\]

\end{theorem}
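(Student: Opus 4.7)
The plan is to use the additivity of the weights $\omega_{\mathcal{P}}$ to reduce to the case of a single integer simplex, apply the Euler--MacLaurin formula of Theorem \ref{EM-d} to each cell of a triangulation, and then exploit a parity symmetry of the multivariate Bernoulli polynomials evaluated at the origin in order to kill the odd negative powers of $N$.

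First I would triangulate $\mathcal{P}=\bigcup_{j=1}^{r}\mathcal{P}_{j}$ into full-dimensional integer $d$-simplices with pairwise disjoint interiors, so that additivity yields $S_{N}(f,\mathcal{P})=\sum_{j=1}^{r}S_{N}(f,\mathcal{P}_{j})$. For each $\mathcal{P}_{j}$, pick a vertex $v_{0}^{(j)}\in\mathbb{Z}^{d}$ and set $\mathcal{P}_{j}^{\prime}=\mathcal{P}_{j}-v_{0}^{(j)}$; let $M_{j}$ be the integer matrix whose columns are the remaining vertices of $\mathcal{P}_{j}^{\prime}$. Using $\omega_{N\mathcal{P}_{j}}(n)=\omega_{N\mathcal{P}_{j}^{\prime}}(n-Nv_{0}^{(j)})$ and the substitution $m=n-Nv_{0}^{(j)}\in\mathbb{Z}^{d}$, I get
\[
S_{N}(f,\mathcal{P}_{j})=\sum_{m\in\mathbb{Z}^{d}}\omega_{N\mathcal{P}_{j}^{\prime}}(m)\,q(m),\qquad q(y):=N^{-d}f\!\left(\tfrac{y}{N}+v_{0}^{(j)}\right),
\]
and then Theorem \ref{EM-d} (with $\tau=N$, the simplex $\mathcal{P}_{j}^{\prime}$, matrix $M_{j}$ and this $q$) applies. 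Since $q_{N,M_{j}}(x)=q(NM_{j}x)=N^{-d}f(M_{j}x+v_{0}^{(j)})$, each pairing $\langle\mu(V,I,J),q_{N,M_{j}}\rangle$ equals $N^{-d}$ times a constant depending on $f$, $M_{j}$ and $v_{0}^{(j)}$ but independent of $N$.

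Next, since $N$ is a positive integer, $M_{j}$ is integer and $\lambda_{V}\in\{0,1\}^{d}$, one has $NM_{j}\lambda_{V}\in\mathbb{Z}^{d}$, so by the $\mathbb{Z}^{d}$-periodicity of $\mathfrak{B}_{K,L}$,
\[
\mathfrak{B}_{J+I,(M_{j}D_{V})^{t}}(-NM_{j}\lambda_{V})=\mathfrak{B}_{J+I,(M_{j}D_{V})^{t}}(0).
\]
Combining with the $N^{-d}$ scaling of the pairings and the explicit $\tau^{d-|I|-|J|}=N^{d-|I|-|J|}$ factor in Theorem \ref{EM-d}, each $(V,I,J)$-summand produces a constant independent of $N$ multiplied by $N^{-|I|-|J|}$. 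The crucial step is the parity identity
\[
\mathfrak{B}_{K,L}(-x)=(-1)^{|K|}\mathfrak{B}_{K,L}(x)\qquad\text{for every integer matrix }L,
\]
which one establishes by combining the classical symmetry $B_{k}(1-t)=(-1)^{k}B_{k}(t)$ (which gives $B_{K,L}(L^{t}\mathbf{1}-x)=(-1)^{|K|}B_{K,L}(x)$), the change of summation index $n\mapsto-n$ in the periodization, and the fact that $L^{t}\mathbf{1}\in\mathbb{Z}^{d}$ so that $\mathfrak{B}_{K,L}(L^{t}\mathbf{1}-x)=\mathfrak{B}_{K,L}(-x)$ by periodicity. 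Setting $x=0$ shows $\mathfrak{B}_{K,L}(0)=0$ whenever $|K|$ is odd, so all $(V,I,J)$-summands with $|I|+|J|$ odd vanish.

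Finally, the remark following Theorem \ref{EM-d} says that the only nonzero $I=0$ summand is the one with $V=(1,\dots,1)$, which equals $\int_{N\mathcal{P}_{j}^{\prime}}q(y)\,dy=\int_{\mathcal{P}_{j}}f$; summing over $j$ produces the leading term $\int_{\mathcal{P}}f$. The remaining nonzero summands all have $|I|+|J|$ even and positive, so they contribute coefficients $\gamma_{k}^{(j)}N^{-2k}$ for $k=(|I|+|J|)/2\geqslant 1$; setting $\gamma_{k}:=\sum_{j}\gamma_{k}^{(j)}$ yields the finite sum in the statement. For the error, the bound in Theorem \ref{EM-d} combined with $|\partial^{\alpha}q_{N,M_{j}}(x)|\leqslant N^{-d}C_{\alpha,M_{j},f}$ (uniform in $x\in S_{d}$) gives
\[
|\mathcal{R}_{w}^{(j)}|\leqslant c\det(M_{j})\,N^{d-w-1}\,(2^{d-2}\pi^{-1}+\delta)^{w+1}\,N^{-d}\,C=O(N^{-w-1}),
\]
and summing the finitely many simplex contributions finishes the proof. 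The main obstacle is establishing the parity identity for $\mathfrak{B}_{K,L}(0)$; once that is in hand, everything else is a careful application of Theorem \ref{EM-d} together with the additivity and translation invariance of the weights $\omega_{\mathcal{P}}$.
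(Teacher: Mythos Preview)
Your proof is correct and follows essentially the same route as the paper: reduce to a single translated integer simplex via additivity, apply Theorem~\ref{EM-d} with $\tau=N$ to $q(y)=N^{-d}f(N^{-1}y+v_0^{(j)})$, observe that the resulting $q_{N,M_j}$ is $N^{-d}$ times an $N$-independent function, and then kill the odd powers of $N$ using $\mathfrak{B}_{K,L}(0)=0$ for $|K|$ odd. The only notable difference is in this last parity step: the paper obtains it directly from the Fourier expansion of Lemma~\ref{Lemma Fourier multi Bernoulli} via the substitution $n\mapsto -n$ in the symmetric index set $\Delta$, whereas you derive it from the classical identity $B_k(1-t)=(-1)^kB_k(t)$ together with periodicity and $L^t\mathbf{1}\in\mathbb{Z}^d$; both arguments are valid (yours needs the regularization convention to handle the boundary of the cube, but that is exactly how $\mathfrak{B}_{K,L}(0)$ is defined).
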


For $d=2$ a similar formula is contained in \cite{BCRT}. A simple consequence
of Theorem \ref{Thm 1} is the following.

\begin{theorem}
\label{Thm 2}Let $\mathcal{P}$ be a homogeneous simplicial $d$-complex with
integer vertices in $\mathbb{R}^{d}$. Let $w$ be a non-negative integer and
let $f\in C^{w+1}(\mathbb{R}^{d})$. Finally, let $\left\{  c_{j}\right\}
_{0\leqslant j\leqslant w/2}$ be the solution of the Vandermonde system%
\[
\left[
\begin{array}
[c]{ccccc}%
1 & 1 & 1 & \cdots & 1\\
1 & 2^{-2} & (2^{-2})^{2} & \cdots & (2^{-2})^{\left\lfloor w/2\right\rfloor
}\\
1 & 2^{-4} & (2^{-4})^{2} & \cdots & (2^{-4})^{\left\lfloor w/2\right\rfloor
}\\
\vdots & \vdots & \vdots & \ddots & \vdots\\
1 & 2^{-2\left\lfloor w/2\right\rfloor } & (2^{-2\left\lfloor w/2\right\rfloor
})^{2} & \cdots & (2^{-2\left\lfloor w/2\right\rfloor })^{\left\lfloor
w/2\right\rfloor }%
\end{array}
\right]  \left[
\begin{array}
[c]{c}%
c_{0}\\
c_{1}\\
c_{2}\\
\vdots\\
c_{\left\lfloor w/2\right\rfloor }%
\end{array}
\right]  =\left[
\begin{array}
[c]{c}%
1\\
0\\
0\\
\vdots\\
0
\end{array}
\right]  .
\]
Then,
\[
\int_{\mathcal{P}}f(x)dx=\sum_{0\leqslant j\leqslant w/2}c_{j}S_{2^{j}%
N}(f,\mathcal{P})+O(N^{-w-1}).
\]

\end{theorem}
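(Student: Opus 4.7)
The proof is essentially an instance of Richardson extrapolation applied to the asymptotic expansion furnished by Theorem \ref{Thm 1}. The plan is to substitute $N \mapsto 2^j N$ into that expansion for each $j=0,1,\ldots,\lfloor w/2\rfloor$, form the linear combination with coefficients $c_j$, and observe that the Vandermonde conditions are precisely what make the integral survive with coefficient $1$ while annihilating every non-trivial power $N^{-2k}$.

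More concretely, I would first apply Theorem \ref{Thm 1} to the dilated index $2^{j}N$, which gives
\[
S_{2^{j}N}(f,\mathcal{P})=\int_{\mathcal{P}}f(x)\,dx+\sum_{0<k\leqslant w/2}\gamma_{k}\,2^{-2jk}\,N^{-2k}+O\bigl((2^{j}N)^{-w-1}\bigr),
\]
where the implied constant depends on $f$ and $\mathcal{P}$ but is independent of $N$; the coefficients $\gamma_k$ are the same for every $j$ because Theorem \ref{Thm 1} produces them from $f$ and $\mathcal{P}$ alone. Multiplying by $c_j$ and summing over $j=0,\ldots,\lfloor w/2\rfloor$ yields
\[
\sum_{j=0}^{\lfloor w/2\rfloor}c_{j}S_{2^{j}N}(f,\mathcal{P})=\Bigl(\sum_{j}c_{j}\Bigr)\int_{\mathcal{P}}f\,dx+\sum_{0<k\leqslant w/2}\gamma_{k}N^{-2k}\Bigl(\sum_{j}c_{j}\,2^{-2jk}\Bigr)+O(N^{-w-1}).
\]
Since $j$ ranges over a fixed finite set, the aggregated remainder is still $O(N^{-w-1})$.

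Next I would read off the Vandermonde system of the statement: it demands exactly that $\sum_{j}c_{j}(2^{-2k})^{j}=\delta_{k,0}$ for $k=0,1,\ldots,\lfloor w/2\rfloor$. The row $k=0$ gives $\sum_{j}c_{j}=1$, so the coefficient of $\int_{\mathcal{P}}f$ becomes $1$; the rows $k=1,\ldots,\lfloor w/2\rfloor$ annihilate the coefficients of $N^{-2},N^{-4},\ldots,N^{-2\lfloor w/2\rfloor}$, which exhausts all $N^{-2k}$ terms with $0<k\leqslant w/2$. The system is uniquely solvable because the nodes $1, 2^{-2}, 2^{-4},\ldots,2^{-2\lfloor w/2\rfloor}$ are pairwise distinct, so the Vandermonde matrix is invertible.

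There is no genuine obstacle here: the only thing to check carefully is that the $\gamma_k$ are the \emph{same} sequence across different values of $j$ (so that the extrapolation actually cancels them), which is immediate from the way Theorem \ref{Thm 1} is stated — the $\gamma_k$ depend on $f$ and $\mathcal{P}$ but not on $N$. Assembling the three displays above gives the claimed identity $\int_{\mathcal{P}}f\,dx=\sum_{0\leqslant j\leqslant w/2}c_{j}S_{2^{j}N}(f,\mathcal{P})+O(N^{-w-1})$, which completes the proof.
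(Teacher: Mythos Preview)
Your proof is correct and follows essentially the same approach as the paper: apply Theorem~\ref{Thm 1} at the points $2^{j}N$, take the linear combination with coefficients $c_{j}$, and use the Vandermonde conditions to isolate the integral while killing the $N^{-2k}$ terms. Your version is slightly more explicit about why the Vandermonde matrix is invertible and why the $\gamma_{k}$ are independent of $j$, but the argument is the same.
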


The coefficients $\gamma_{k}$ in Theorem \ref{Thm 1} are integro-differential
functionals applied to the function $f(x)$. In Theorem \ref{Thm 2} these
cumbersome coefficients have disappeared and only weighted Riemann sums are present.

We have not found a multidimensional analog of Mordell's theorem in the
literature. On the contrary the literature on multidimensional
Euler--MacLaurin summation formulas is vast and in continuous growth and to
have a comprehensive list of references is a challenging task. Here we recall
a few of these results and we try to compare them with ours, apologizing in
advance with all the authors that we do not explicitly mention.

If $\mathcal{P}$ denotes a regular integral convex polytope, Karshon, Sternberg and Weitsman obtained in \cite{KSW} the weighted formula%
\[
\sum_{n\in\mathbb{Z}^{d}}\sigma_{\mathcal{P}}(n)q(n)=\prod_{i=1}^{\ell}%
L^{2k}(D_{i})\int_{\mathcal{P}(h_{1},\ldots,h_{\ell})}q(x)\,dx+R_{\mathcal{P}%
}^{2k+1}(q)
\]
where $q\in C^{2k+1}$ is compactly supported, $R_{\mathcal{P}}^{2k+1}(f)$ is a
remainder explicitly given, $\ell$ is the number of facets, i.e., faces of
$\mathcal{P}$ of co-dimension $1$ and $\mathcal{P}(h_{1},\ldots,h_{\ell})$ is
a perturbation of the original polytope obtained expanding outward a distance
$h_{i}$ in the direction of th $i$-th facet. The weight function
$\sigma_{\mathcal{P}}$ is defined to be $0$ in the exterior of $\mathcal{P}$,
$1$ in the interior of $\mathcal{P}$ and $\sigma_{\mathcal{P}}(x)=2^{-c(x)}$
if $x$ is on the boundary of $\mathcal{P}$ and where $c(x)$ is the
co-dimension of the smallest face containing $x$. The operators $L^{2k}%
(D_{i})$ are the differential operators defined by the operators
$D_{i}=\partial/\partial h_{i}$, $i=1,\ldots,d$ and the functions
\[
L^{2k}(x)=1+\sum_{j=1}^{k}\frac{1}{(2j)!}b_{2j}x^{2j}%
\]
where the $b_{2j}$'s are Bernoulli numbers. A similar formula is proved for simple polytopes in \cite{KSW2}. Such Euler--MacLaurin formula is quite close
to our formula in the spirit, but we highlight a main difference. On one hand
the weight function $\sigma_{\mathcal{P}}$ is immediate to compute, since it
only depends on the co-dimension of a face at a given point. On the other hand
$\sigma_{\mathcal{P}}$ is not additive, whereas $\omega_{\mathcal{P}}$ is, and
this allows to apply Theorem \ref{EM-d} to polytopes by glueing simplices together.

We also refer the reader to the paper \cite{KSW3} and the references therein;
in this work the authors review and discuss the results in \cite{KSW, KSW2}
together with previous results by several different authors (\cite{KP,CS1,
CS2, BrionVergne}). See also \cite{agapito_weitsman}.

Another result we recall is the Euler-MacLaurin summation formula in
\cite{BV}. Let $\mathcal{P}\subseteq\mathbb{R}^{d}$ be a semi-rational convex
polyhedron of dimension $\ell\leq d$. Semi-rational means that the facets of
$\mathcal{P}$ are affine hyperplanes parallel to rational ones. Then, the
authors provide the asymptotic expansion, as $N\rightarrow+\infty$ ,
\[
\frac{1}{N^{\ell}}\sum_{n\in N\mathcal{P}\cap\mathbb{Z}^{d}}f(N^{-1}n)\sim
\int_{\mathcal{P}}f(x)dx+\sum_{k\geq1}a_{k}(N)N^{-k}.
\]
The authors also discuss their results in comparison with other previous
results (\cite{lFP, T}).

Finally, we also recall the works \cite{baldoni_berline_vergne, BV07,
fischer_pommersheim, GP, GS}.

\bigskip

Our proofs exploit harmonic analysis techniques with classical tools such as
the Poisson summation formula. Recall that if $f$ is an integrable function on $\mathbb R^d$
its Fourier transform $\widehat f$ is defined as
\[
\widehat{f}(\xi)=\int_{\mathbb{R}^{d}}f(x)e^{-2\pi i\xi\cdot x}dx.
\]

The following lemmas are well known.

\begin{lemma}
\label{Def Q-tilde}Let $\varphi(x)$ be a non-negative, radial, smooth,
function in $\mathbb{R}^{d}$, with compact support and integral one, and for
every $\varepsilon>0$ set $\varphi_{\varepsilon}(x)=\varepsilon^{-d}%
\varphi(\varepsilon^{-1}x)$. Let $\mathcal{P}$ be a convex polytope in
$\mathbb{R}^{d}$, let $q(x)$ be a smooth function in $\mathbb{R}^{d}$ and let
$Q(x)=q(x)\chi_{\mathcal{P}}(x)$. Then
\[
\lim_{\varepsilon\rightarrow0+}\varphi_{\varepsilon}\ast Q(x)=\omega
_{\mathcal{P}}(x)q(x).
\]

\end{lemma}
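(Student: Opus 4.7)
The plan is to perform a change of variables in the convolution and reduce the problem, via the local (tangent-cone) structure of $\mathcal{P}$ at $x$, to a scale-invariant integral on the unit sphere that the radiality of $\varphi$ makes match the normalised integral appearing in the definition of $\omega_{\mathcal{P}}$. Substituting $z=(x-y)/\varepsilon$ I would write
\[
\varphi_\varepsilon\ast Q(x)=\int_{\mathbb{R}^d}\varphi(z)\,q(x-\varepsilon z)\,\chi_{\mathcal{P}}(x-\varepsilon z)\,dz.
\]
Since $\varphi$ has compact support and $q$ is continuous, $q(x-\varepsilon z)\to q(x)$ uniformly in $z\in\mathrm{supp}(\varphi)$, and $|\varphi\chi_{\mathcal{P}}|\le\varphi\in L^{1}$, so the proof reduces to showing that $\int\varphi(z)\,\chi_{\mathcal{P}}(x-\varepsilon z)\,dz\to\omega_{\mathcal{P}}(x)$.

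Next I would introduce the tangent cone $C_{x}$ of $\mathcal{P}$ at $x$, a convex polyhedral cone with apex at $x$. Because the boundary of $\mathcal{P}$ near $x$ is cut out by finitely many affine half-spaces, for every $z$ outside the Lebesgue-null set $\partial(C_{x}-x)$ there exists $\varepsilon_{0}(z)>0$ with $\chi_{\mathcal{P}}(x-\varepsilon z)=\chi_{C_{x}}(x-\varepsilon z)$ for $0<\varepsilon<\varepsilon_{0}(z)$; and since $C_{x}-x$ is invariant under positive dilations, $\chi_{C_{x}}(x-\varepsilon z)=\chi_{C_{x}-x}(-z)$. Dominated convergence then gives
\[
\lim_{\varepsilon\to 0+}\int\varphi(z)\,\chi_{\mathcal{P}}(x-\varepsilon z)\,dz=\int\varphi(z)\,\chi_{C_{x}-x}(-z)\,dz.
\]

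Because $\varphi$ is radial (hence even) and $B(0,1)=-B(0,1)$, the right-hand side equals $\int\varphi(w)\,\chi_{C_{x}-x}(w)\,dw$. Writing $\varphi(w)=\tilde\varphi(|w|)$ and passing to polar coordinates, the dilation invariance of $C_{x}-x$ lets me factor
\[
\int\varphi(w)\,\chi_{C_{x}-x}(w)\,dw=\left(\int_{0}^{\infty}\tilde\varphi(r)\,r^{d-1}\,dr\right)\int_{S^{d-1}}\chi_{C_{x}-x}(\omega)\,d\omega=\frac{1}{|S^{d-1}|}\int_{S^{d-1}}\chi_{C_{x}-x}(\omega)\,d\omega,
\]
where $\int\varphi=1$ forces $\int_{0}^{\infty}\tilde\varphi(r)\,r^{d-1}\,dr=|S^{d-1}|^{-1}$. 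Applying the same tangent-cone reduction directly to the defining limit of $\omega_{\mathcal{P}}(x)$ (with $\varphi$ replaced by $|B(0,1)|^{-1}\chi_{B(0,1)}$) produces the identical sphere integral, so both sides coincide with $\omega_{\mathcal{P}}(x)$, and multiplying back by $q(x)$ concludes the proof.

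The main obstacle I anticipate is the almost-everywhere convergence $\chi_{\mathcal{P}}(x-\varepsilon z)\to\chi_{C_{x}-x}(-z)$. For a general convex body this would require more care, but for a polytope it follows from observing that in a neighbourhood of $x$ membership in $\mathcal{P}$ is determined by the sign of finitely many affine functionals vanishing on facets through $x$; as long as $z$ avoids the null union of the zero sets of these functionals, the sign at $x-\varepsilon z$ stabilises for $\varepsilon$ small and agrees with the sign that defines $C_{x}$.
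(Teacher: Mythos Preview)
Your argument is correct and is precisely the detailed execution of the paper's one-line proof, which reads simply ``Integrate in polar coordinates.'' You have filled in the change of variables, the tangent-cone reduction, and the polar-coordinate computation that this hint leaves implicit; the key mechanism---radiality of $\varphi$ together with the dilation invariance of the local cone at $x$---is the same.
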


\begin{proof}
Integrate in polar coordinates.
\end{proof}

\begin{lemma}
\label{Poisson} With the notation of the above lemma, for every $\varepsilon
>0$ and every $x\in\mathbb{R}^{d}$ one has%
\[
\sum_{k\in\mathbb{Z}^{d}}\varphi_{\varepsilon}\ast Q(x+k)=\sum_{k\in
\mathbb{Z}^{d}}\widehat{\varphi}( \varepsilon k) \widehat{Q}( k) e^{2\pi
ik\cdot x}.
\]
Moreover%
\[
\sum_{k\in\mathbb{Z}^{d}}\omega_{\mathcal{P}}(x+k)q(x+k)=\lim_{\varepsilon
\to0+}\sum_{k\in\mathbb{Z}^{d}}\widehat{\varphi}( \varepsilon k) \widehat{Q}(
k) e^{2\pi ik\cdot x}.
\]
The first series is a finite sum. The second series converges absolutely.
\end{lemma}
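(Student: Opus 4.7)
The plan is to reduce both identities to the classical Poisson summation formula applied to the mollified function $\varphi_\varepsilon \ast Q$, and then to let $\varepsilon \to 0+$ in order to recover the angle-weighted sum. The central observation is that, since $\varphi$ has compact support and $Q = q\chi_\mathcal{P}$ is bounded and compactly supported in the polytope $\mathcal{P}$, the convolution $\varphi_\varepsilon \ast Q$ is a $C^\infty$ function supported in the compact set $\mathcal{P} + B(0,\varepsilon)$. Hence the periodized series $\sum_{k\in\mathbb{Z}^d}(\varphi_\varepsilon\ast Q)(x+k)$ is in fact a finite sum for each $x$, and the periodic function it defines is smooth, so standard Poisson summation applies without convergence concerns and yields
\[
\sum_{k\in\mathbb{Z}^d}(\varphi_\varepsilon\ast Q)(x+k) = \sum_{k\in\mathbb{Z}^d}\widehat{\varphi_\varepsilon \ast Q}(k)\, e^{2\pi i k\cdot x}.
\]
The relations $\widehat{\varphi_\varepsilon}(\xi) = \widehat{\varphi}(\varepsilon\xi)$ and $\widehat{f \ast g} = \widehat{f}\,\widehat{g}$ then turn this into the first assertion of the lemma. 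Absolute convergence of the right-hand side is immediate: $\widehat{\varphi}$ is Schwartz, so $\widehat{\varphi}(\varepsilon k)$ decays faster than any polynomial in $|k|$, while $\widehat{Q}$ is bounded (in fact continuous) since $Q \in L^1(\mathbb{R}^d)$.

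For the second identity, I would pass to the limit $\varepsilon \to 0+$ in the first. On the left-hand side the crucial point is that for every $\varepsilon \in (0,\varepsilon_0]$ the support of $\varphi_\varepsilon \ast Q$ lies inside the fixed set $\mathcal{P} + B(0,\varepsilon_0)$, so the indices $k$ contributing a nonzero term to $(\varphi_\varepsilon\ast Q)(x+k)$ form a finite set depending on $x$ but not on $\varepsilon$. Lemma \ref{Def Q-tilde} then provides the pointwise convergence $(\varphi_\varepsilon\ast Q)(x+k) \to \omega_\mathcal{P}(x+k)\, q(x+k)$ for each fixed $k$, and passing the limit through the finite sum delivers the second identity of the lemma.

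The main, though mild, obstacle is precisely this uniform-in-$\varepsilon$ control on the number of nonzero terms in the periodized left-hand side: without it one could not exchange the limit and the summation on the left. Once the compactness of $\operatorname{supp}(\varphi_\varepsilon \ast Q)$ inside a fixed enlargement of $\mathcal{P}$ is exploited, the rest of the argument is routine Fourier analysis.
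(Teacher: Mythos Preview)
Your proposal is correct and is essentially the same approach as the paper, which merely states ``This is the Poisson summation formula'' without further elaboration. You have simply (and correctly) unpacked the routine verifications---compact support of $\varphi_\varepsilon\ast Q$, rapid decay of $\widehat{\varphi}(\varepsilon\,\cdot)$, and the uniform-in-$\varepsilon$ finiteness of the left-hand sum---that the paper leaves implicit.
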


\begin{proof}
This is the Poisson summation formula.
\end{proof}

To prove our results we need an explicit formula for the asymptotic expansion
of $\widehat{q\chi_{\mathcal{P}}}$ when $\mathcal{P}$ is a simplex, which
requires a non-trivial effort to be proved (Lemma
\ref{Lemma trasf simplesso standard} and Lemma
\ref{Lemma trasf simplesso generico}). The 2-dimensional case was dealt with
in \cite[Lemma 5]{BCRT}. Very elegant expansion formulas for $\widehat{\chi
_{\mathcal{P}}}$ (that is when $q\equiv 1$) in any dimension $d$ already appeared in \cite{Brion, DLR}. See also \cite{Sinai}
and the references therein.

The paper is organized as follows. In Section \ref{section-mordell} we present
a Fourier analytic proof of Mordell's theorem both for sake of completeness
and for illustrating the proof strategy that we will use in the multivariate
setting. In Section \ref{section-standard} we study the Fourier transform of a
function supported on a simplex. In Section \ref{Sect main res} we prove our
main result on the expansion of%
\[
\sum_{n\in\mathbb{Z}^{d}}\omega_{\tau\mathcal{P}}(x+n)q(x+n)
\]
in terms of our multivariate Bernoulli polynomials, that is Theorem
\ref{main-thm} and Corollary \ref{Coroll3}, whereas in Section
\ref{Sect proofs} we prove Theorem \ref{Thm 1} and Theorem \ref{Thm 2}. We
also include Appendix A (Section \ref{section-appendix}), where we collect some
well-known results in harmonic analysis on groups that we use,and Appendix B
(Section \ref{section-B}), where a further description of the periodized
multivariate Bernoulli polynomials is given.

\section{Bernoulli polynomials and a theorem of Mordell\label{section-mordell}%
}

The classical Bernoulli polynomials have elegant trigonometric expansions,
which predate Fourier. Recall that if $f$ is an integrable function on the
torus $\mathbb{T}=\mathbb{R}/\mathbb{Z}$, its Fourier series at a point $\xi$
is given by
\[
\sum_{n\in\mathbb{Z}}\widehat{f}(n)e^{2\pi in\xi}%
\]
where the Fourier coefficiente $\widehat{f}(n)$ is defined as
\[
\widehat{f}(n)=\int_{0}^{1}f(x)e^{-2\pi inx}\,dx.
\]

\begin{theorem}
[L. Euler, 1752]\textit{If }$n\geqslant1$, \textit{then, for every }$x$,%
\[
B_{n}(x)=-\sum_{k\in\mathbb{Z}\setminus\{ 0\} }\dfrac{e^{2\pi ikx}}{( 2\pi ik)
^{n}}.
\]

\end{theorem}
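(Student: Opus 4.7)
My plan is to compute the Fourier coefficients of the $1$-periodic function $B_n$ directly and verify that the claimed series is precisely its Fourier series, with convergence ensured by the regularity/regularization conventions in the definition.

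For $n\geqslant 1$ and $k\in\mathbb Z\setminus\{0\}$, the idea is to integrate by parts on $(0,1)$, using the recursion $\frac{d}{dx}B_n=B_{n-1}$:
\[
\widehat{B_n}(k)=\int_0^1 B_n(x)e^{-2\pi ikx}\,dx=\Bigl[B_n(x)\frac{e^{-2\pi ikx}}{-2\pi ik}\Bigr]_0^1+\frac{1}{2\pi ik}\int_0^1 B_{n-1}(x)e^{-2\pi ikx}\,dx.
\]
For $n\geqslant 2$, the identity $B_n(1)-B_n(0)=\int_0^1 B_{n-1}(x)\,dx=0$ (from the normalization axiom in the definition) kills the boundary term, yielding the recursion $\widehat{B_n}(k)=\tfrac{1}{2\pi ik}\widehat{B_{n-1}}(k)$. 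To start the induction I would treat $n=1$ separately: on $(0,1)$ one has $B_1(x)=x-\tfrac12$, so the boundary term gives $\tfrac{1}{-2\pi ik}$ while the remaining integral involves $B_0\equiv 1$ and vanishes for $k\neq 0$; hence $\widehat{B_1}(k)=-\tfrac{1}{2\pi ik}$. Iterating yields $\widehat{B_n}(k)=-\tfrac{1}{(2\pi ik)^n}$ for all $n\geqslant 1$ and $k\neq 0$. The zero mode is handled separately: $\widehat{B_n}(0)=\int_0^1 B_n(x)\,dx=0$ for $n\geqslant 1$, matching the exclusion of $k=0$ in the sum.

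It remains to pass from Fourier coefficients to pointwise equality. For $n\geqslant 2$ the bound $|(2\pi ik)^{-n}|=O(|k|^{-n})$ makes the series absolutely and uniformly convergent, and $B_n$ is continuous (this follows from $B_n(0)=B_n(1)$ for $n\geqslant 2$, already used above), so standard Fourier inversion gives the equality everywhere. For $n=1$, $B_1$ has a jump at the integers; away from the integers one can appeal to the Dirichlet pointwise convergence theorem for piecewise $C^1$ functions, while at integer points the regularization prescribed in the definition exactly produces the average $\tfrac12(B_1(0^+)+B_1(1^-))=0$ of one–sided limits, which also equals the symmetric partial sum of the Fourier series.

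The main obstacle is the $n=1$ case: one must both justify convergence of the non-absolutely convergent series $-\sum_{k\neq0}e^{2\pi ikx}/(2\pi ik)$ and check that the averaged value at the points of discontinuity agrees with both sides. Once this base case is pinned down, the higher-$n$ cases are routine because the absolute convergence and continuity take care of themselves through the integration-by-parts recursion above.
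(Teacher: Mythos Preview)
Your proposal is correct and follows essentially the same approach as the paper: both arguments compute $\widehat{B_1}(k)=-\tfrac{1}{2\pi ik}$ directly from $B_1(x)=x-\tfrac12$, then use the recursion $B_n'=B_{n-1}$ together with $\int_0^1 B_{n-1}=0$ to propagate to higher $n$ (the paper phrases this as term-by-term integration of the series, you phrase it as integration by parts on the Fourier coefficients, which is the same computation), and both handle convergence at $n=1$ via symmetric partial sums at the jump and absolute convergence for $n\geqslant 2$.
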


\begin{proof}
If $n=1$ and $0<x<1$, then 
\[
B_{1}(x)=x-1/2=\sum_{k=-\infty}^{+\infty}\left(  \int_{0}^{1}( y-1/2) e^{-2\pi
iky}dy\right)  e^{2\pi ikx}=-\sum_{k\in\mathbb{Z}\setminus\{ 0\} }%
\dfrac{e^{2\pi ikx}}{2\pi ik}.
\]
The symmetric partial sums with $-K\leqslant k\leqslant K$ of the above series
converge pointwise for every $0<x<1$, and by symmetry they also converge to
zero for $x=0$ and for $x=1$. Since $\frac{d}{dx}B_{n+1}(x)=B_{n}(x)$, the
Fourier expansion of $B_{n+1}(x)$ follows by integrating term by term the
series of $B_{n}(x)$. Since $\int_{0}^{1}B_{n+1}(x)dx=0$ the constant of
integration is zero. Observe that for $n>1$ the Fourier series of $B_{n}(x)$
converges absolutely and uniformly.
\end{proof}

The original proof of Euler is different and very interesting, see \cite{Eul}.
The following bounds are a consequence of the above trigonometric expansions.

\begin{corollary}
\label{Lemma Bernoulli}The periodic Bernoulli polynomials $B_{n}( x) $ with
$n\geqslant0$ are bounded by $( \pi^{2}/3) ( 2\pi) ^{-n}$. More precisely,%
\[
( 2\pi) ^{-n}\leqslant\sup_{x\in[ 0,1] }\vert B_{n}( x) \vert\leqslant(
\pi^{2}/3) ( 2\pi) ^{-n}.
\]

\end{corollary}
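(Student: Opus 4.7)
The plan is to read both inequalities directly off the Fourier expansion
$B_{n}(x)=-\sum_{k\in\mathbb{Z}\setminus\{0\}}(2\pi ik)^{-n}e^{2\pi ikx}$
provided by Euler's theorem above.

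For the upper bound, I would first treat the range $n\geqslant 2$, where the series converges absolutely. The triangle inequality then yields
\[
|B_{n}(x)|\leqslant \sum_{k\in\mathbb{Z}\setminus\{0\}}\frac{1}{(2\pi|k|)^{n}}=\frac{2\zeta(n)}{(2\pi)^{n}}\leqslant \frac{2\zeta(2)}{(2\pi)^{n}}=\frac{\pi^{2}/3}{(2\pi)^{n}},
\]
where one uses the monotonicity of $\zeta$ on $[2,\infty)$ together with the classical value $\zeta(2)=\pi^{2}/6$. The two remaining cases are handled by inspection: $B_{0}\equiv 1\leqslant \pi^{2}/3$, while on $(0,1)$ one has $B_{1}(x)=x-1/2$, so $\sup|B_{1}|=1/2\leqslant \pi/6=(\pi^{2}/3)(2\pi)^{-1}$.

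For the lower bound, I would invoke the trivial comparison $|\widehat{f}(k)|\leqslant \|f\|_{L^{\infty}([0,1])}$, valid for any $1$-periodic integrable $f$ since $|\widehat{f}(k)|\leqslant \int_{0}^{1}|f|\leqslant \|f\|_{\infty}$. Taking $f=B_{n}$ with $n\geqslant 1$ and $k=1$, Euler's expansion identifies $\widehat{B_{n}}(1)=-(2\pi i)^{-n}$, hence
\[
(2\pi)^{-n}=|\widehat{B_{n}}(1)|\leqslant \sup_{x\in[0,1]}|B_{n}(x)|.
\]
The case $n=0$ is again trivial since $B_{0}\equiv 1=(2\pi)^{0}$.

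I do not anticipate any serious obstacle. The only mild subtlety is that the absolute-value majorant of the Fourier series diverges at $n=1$ (the harmonic series), which is why that single case must be dispatched via the explicit formula $B_{1}(x)=x-1/2$ rather than by term-by-term bounding. Once that is noticed, both bounds follow in a couple of lines.
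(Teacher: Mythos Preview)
Your proposal is correct and follows essentially the same route as the paper: both arguments bound $|B_n|$ from above via the absolute Fourier series and $\zeta(n)\leqslant\zeta(2)=\pi^2/6$ for $n\geqslant 2$ (handling $n=0,1$ by inspection), and from below by observing that the modulus $(2\pi)^{-n}$ of the first Fourier coefficient cannot exceed $\int_0^1|B_n|\leqslant\sup|B_n|$.
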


\begin{proof}
If $n=0$ then $B_{0}(x)=1$ and the lemma holds. If $n=1$ and $\left\lfloor
x\right\rfloor $ denotes the integer part of $x$, then%
\[
B_{1}(x)=x-\left\lfloor x\right\rfloor -1/2,
\]
so that%
\[
\sup_{x\in\lbrack0,1]}\left\vert B_{1}(x)\right\vert =1/2
\]
and again the lemma holds. Finally, if $n>1$,%
\[
|B_{n}(x)|=\left\vert \sum_{k\in\mathbb{Z}\setminus\{0\}}\frac{e^{2\pi ikx}%
}{(2\pi ik)^{n}}\right\vert \leqslant2(2\pi)^{-n}\sum_{k=1}^{+\infty}%
k^{-n}=2(2\pi)^{-n}\zeta(n).
\]
Observe that $\zeta(n)\leqslant\zeta(2)\leqslant\pi^{2}/6$. Also observe that
the Fourier coefficient with $k=1$ is $(2\pi i)^{-n}$, so that
\[
(2\pi)^{-n}\leqslant\int_{0}^{1}|B_{n}(x)|dz\leqslant\sup_{x\in\lbrack
0,1]}|B_{n}(x)|.
\]

\end{proof}

The following lemma provides an asymptotic expansion of the Fourier transform
of a piecewise smooth function.

\begin{lemma}
\label{Lemma1d}Let $w\geqslant0$. If the function $q(x)$ has $w+1$ integrable
derivatives in $[a,b]$, then for every $\xi\not =0$
\begin{align*}
\int_{a}^{b}q(x)e^{-2\pi ix\xi}dx=  &  \sum_{j=0}^{w}(2\pi i\xi)^{-j-1}\left(
e^{-2\pi ia\xi}\dfrac{d^{j}q}{dx^{j}}(a)-e^{-2\pi ib\xi}\dfrac{d^{j}q}{dx^{j}%
}(b)\right) \\
&  +(2\pi i\xi)^{-w-1}\int_{a}^{b}\dfrac{d^{w+1}q}{dx^{w+1}}(x)e^{-2\pi ix\xi
}dx.
\end{align*}

\end{lemma}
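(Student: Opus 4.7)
The plan is to prove the formula by induction on $w \geq 0$, the inductive mechanism being a single integration by parts at each stage. For the base case $w = 0$, one integrates $\int_a^b q(x) e^{-2\pi i x \xi} dx$ by parts once, differentiating $q$ and taking the antiderivative $-(2\pi i \xi)^{-1} e^{-2\pi i x \xi}$ of the exponential. The boundary term is
\[
(2\pi i\xi)^{-1}\left(e^{-2\pi i a\xi}q(a) - e^{-2\pi i b\xi}q(b)\right),
\]
and the surviving integral is $(2\pi i\xi)^{-1}\int_a^b q'(x)e^{-2\pi i x\xi}dx$, which is exactly the claimed identity with $w = 0$ and the single summand $j = 0$.

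For the inductive step, assume the identity holds for some $w \geq 0$. The only term that needs to change is the remainder $(2\pi i\xi)^{-w-1}\int_a^b q^{(w+1)}(x)e^{-2\pi i x\xi}dx$. Integrating this by parts once more, differentiating $q^{(w+1)}$, produces the boundary term
\[
(2\pi i\xi)^{-w-2}\left(e^{-2\pi i a\xi}q^{(w+1)}(a) - e^{-2\pi i b\xi}q^{(w+1)}(b)\right),
\]
which is precisely the $j = w+1$ summand of the formula, together with a new remainder $(2\pi i\xi)^{-w-2}\int_a^b q^{(w+2)}(x)e^{-2\pi i x\xi}dx$. Substituting back into the inductive hypothesis delivers the identity for $w+1$ and closes the induction.

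There is essentially no obstacle here; the only subtlety worth flagging is the regularity justification for integration by parts. The hypothesis that $q$ has $w+1$ integrable derivatives on $[a,b]$ means that $q, q', \ldots, q^{(w)}$ are absolutely continuous on $[a,b]$ (being iterated indefinite integrals of integrable functions), so each application of integration by parts in the induction is valid; only the top derivative $q^{(w+1)}$ is merely integrable, and it appears solely inside the final remainder integral, where no further differentiation is required.
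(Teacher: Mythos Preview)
Your proof is correct and is exactly the approach of the paper, whose proof reads in its entirety ``Integrate by parts.'' You have simply spelled out the induction and the regularity justification that the paper leaves implicit.
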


\begin{proof}
Integrate by parts.
\end{proof}

With the above results one easily obtains the following.

\begin{proof}
[Proof of Theorem \ref{Thm Mordell-intro}]By Lemma \ref{Lemma1d} and with the
notation of Lemma \ref{Def Q-tilde} and Lemma \ref{Poisson}, for every
$x\in\mathbb{R}$ we have the chain of equalities
\begin{align*}
&  \sum_{n=-\infty}^{+\infty}\omega_{\lbrack a,b]}(x+n)q(x+n)=\lim
_{\varepsilon\rightarrow0+}\sum_{k=-\infty}^{+\infty}\widehat{\varphi
}(\varepsilon k)\widehat{Q}(k)e^{2\pi ikx}\\
=  &  \lim_{\varepsilon\rightarrow0+}\sum_{k=-\infty}^{+\infty}\widehat
{\varphi}(\varepsilon k)\left(  \int_{a}^{b}q(y)e^{-2\pi iky}dy\right)
e^{2\pi ikx}\\
=  &  \int_{a}^{b}q(y)dy+\!\lim_{\varepsilon\rightarrow0+}\sum_{k\neq0}%
\widehat{\varphi}(\varepsilon k)\bigg(  \sum_{j=0}^{w}(2\pi ik)^{-j-1}\Big(
e^{-2\pi iak}\dfrac{d^{j}q}{dx^{j}}(a)-e^{-2\pi ibk}\dfrac{d^{j}q}{dx^{j}%
}(b)\Big)  \bigg)  e^{2\pi ikx}\\
&  +\lim_{\varepsilon\rightarrow0+}\sum_{k\neq0}\widehat{\varphi}(\varepsilon
k)\left(  (2\pi ik)^{-w-1}\int_{a}^{b}\dfrac{d^{w+1}q}{dy^{w+1}}(y)e^{-2\pi
iyk}dy\right)  e^{2\pi ikx}\\
=  &  \int_{a}^{b}q(y)dy+\sum_{j=0}^{w}\dfrac{d^{j}q}{dx^{j}}(a)\left(
\lim_{\varepsilon\rightarrow0+}\sum_{k\neq0}\widehat{\varphi}(\varepsilon
k)(2\pi ik)^{-j-1}e^{2\pi ik(x-a)}\right) \\
&  -\sum_{j=0}^{w}\dfrac{d^{j}q}{dx^{j}}(b)\left(  \lim_{\varepsilon
\rightarrow0+}\sum_{k\neq0}\widehat{\varphi}(\varepsilon k)(2\pi
ik)^{-j-1}e^{2\pi ik(x-b)}\right) \\
&  +\int_{a}^{b}\dfrac{d^{w+1}q}{dy^{w+1}}(y)\left(  \lim_{\varepsilon
\rightarrow0+}\sum_{k\neq0}\widehat{\varphi}(\varepsilon k)(2\pi
ik)^{-w-1}e^{2\pi ik(x-y)}\right)  dy\\
=  &  \int_{a}^{b}q(y)dy-\sum_{j=0}^{w}\dfrac{d^{j}q}{dx^{j}}(a)B_{j+1}%
(x-a)+\sum_{j=0}^{w}\dfrac{d^{j}q}{dx^{j}}(b)B_{j+1}(x-b)\\
&  -\int_{a}^{b}\dfrac{d^{w+1}q}{dy^{w+1}}(y)B_{w+1}(x-y)dy
\end{align*}
and $(i)$ is proved. The second part follows from Corollary
\ref{Lemma Bernoulli}.
\end{proof}

The above proof is not the original one of Mordell but it is inspired by
\cite{C-D-R} .

\section{The Fourier transform of a function supported on a simplex
\label{section-standard}}

A key ingredient for the proofs of our main results is a precise estimate of
the Fourier transform of a function restricted to a simplex. We first consider
the standard simplex.

\subsection{The standard simplex}

Let
\[
S_{d}=\left\{  x\in\mathbb{R}^{d}:x_{j}\geqslant0,\quad\sum_{j=1}^{d}%
x_{j}\leqslant1\right\}
\]
be the standard simplex. We want to give an asymptotic expansion of the
Fourier transform of the function $G( x) =g( x) \chi_{S_{d}}( x) $ where $g\in
C^{w+1}( \mathbb{R}^{d}) $. In \cite{Brion, DLR}, see also \cite{Sinai} and
the references therein, there are elegant symmetric formulas for
$\widehat{\chi_{S_{d}}}( \xi) $. The formulas we obtain are less elegant but
somehow more explicit and, in particular, we provide a formula when $\xi$
belongs to a singular direction as well. Since the asymptotic behaviour of
$\widehat{G}( \xi) $ depends on the faces of $S_{d}$ that are orthogonal to
$\xi$, it is natural to have different formulas in different regions. Hence,
we need to partition the space of frequencies into a finite number of cones
$\mathcal{Q}(\theta)$.

\begin{definition}
Let $\Theta_{d}$ be the class of all subspaces of $\mathbb{R}^{d}$ generated
by any possible choice of vectors in all the bases of $\mathcal{F}^{\left(
d\right)  }$. Then, $\Theta_{d}$ induces a partition of $\mathbb{R}^{d}$ into
a finite number of (possibly disconnected) conical regions $\mathcal{Q}%
(\theta)$, $\theta\in\Theta_{d}$, defined as follows: $\xi\in\mathcal{Q}%
(\theta)$ if and only if $\xi$ is orthogonal to all the vectors in $\theta$,
but it is not orthogonal to any other vector in the bases of $\mathcal{F}%
^{\left(  d\right)  }$ which is not in $\theta$. Namely,%
\begin{align*}
\mathcal{Q}(\theta)  &  =\left\{  \xi\in\mathbb{R}^{d}:\text{for all }%
b\in\bigcup\limits_{\mathcal{B\in F}^{\left(  d\right)  }}\mathcal{B}\text{,
}\xi\cdot b=0~\text{iff }b\in\theta\right\} \\
&  =\left\{  \xi\in\theta^{\perp}:\prod_{b\in\mathcal{B}\setminus\theta
}(b\cdot\xi)\neq0\text{, for all }\mathcal{B\in F}^{\left(  d\right)
}\right\}  .
\end{align*}

\end{definition}

We explicitly assume that the zero dimensional space belongs to $\Theta_{d}$
and in this case the associated cone has nonempty interior. In the other cases
such cones have empty interior.

\begin{lemma}
\label{Lemma Q(theta)}

(i) $\left\{  \mathcal{Q}(\theta)\right\}  _{\theta\in\Theta_{d}}$ is a
partition of $\mathbb{R}^{d}$.

(ii) Let $F(x)$ be a bounded function on $\mathbb{T}^{d}=\mathbb{R}%
^{d}/\mathbb{Z}^{d}$ and let $\theta$ be in $\Theta_{d}$. Then
\[
\sum_{m\in\mathcal{Q}(\theta)\cap\mathbb{Z}^{d}}\widehat{F}(m)e^{2\pi imx}%
\]
is a bounded function and there exists $c(\theta)$ such that%
\[
\sup_{x}\bigg\vert \sum_{m\in\mathcal{Q}(\theta)\cap\mathbb{Z}^{d}}\widehat
{F}(m)e^{2\pi imx}\bigg\vert \leqslant c(\theta)\sup_{x}\left\vert
F(x)\right\vert .
\]

\end{lemma}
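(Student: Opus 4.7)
For part (i), I first observe that each $\theta\in\Theta_{d}$ coincides with $\mathrm{span}(\theta\cap\bigcup_{\mathcal{B}\in\mathcal{F}^{(d)}}\mathcal{B})$: by definition $\theta$ is spanned by some subset $S\subseteq\bigcup\mathcal{B}$, and $S\subseteq\theta\cap\bigcup\mathcal{B}\subseteq\theta$ pinches the span between $\theta$ and itself. Hence for any $\xi\in\mathbb{R}^{d}$, setting $T_{\xi}=\{b\in\bigcup\mathcal{B}:\xi\cdot b=0\}$ and $\theta_{\xi}=\mathrm{span}(T_{\xi})\in\Theta_{d}$, the three conditions ``$\xi\cdot b=0$'', ``$b\in T_{\xi}$'', ``$b\in\theta_{\xi}$'' are equivalent for $b\in\bigcup\mathcal{B}$ (one implication uses $\xi\perp T_{\xi}\Rightarrow\xi\perp\theta_{\xi}$, another is the opening observation). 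Thus $\xi\in\mathcal{Q}(\theta_{\xi})$; conversely $\xi\in\mathcal{Q}(\theta)$ forces $\theta\cap\bigcup\mathcal{B}=T_{\xi}$ and hence $\theta=\theta_{\xi}$, giving both existence and uniqueness.

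For part (ii), the strategy is inclusion--exclusion followed by averaging over rational subtori. Let $\mathcal{S}=\bigcup\mathcal{B}\setminus\theta$, so that the definition of $\mathcal{Q}(\theta)$ reads $\mathcal{Q}(\theta)=\theta^{\perp}\cap\bigcap_{b\in\mathcal{S}}\{\xi:\xi\cdot b\neq0\}$. Expanding $\chi_{\theta^{\perp}}\prod_{b\in\mathcal{S}}(1-\chi_{b^{\perp}})$ yields the pointwise identity
\[
\chi_{\mathcal{Q}(\theta)}(\xi)=\sum_{T\subseteq\mathcal{S}}(-1)^{|T|}\chi_{V_{T}^{\perp}}(\xi),\qquad V_{T}:=\theta+\mathrm{span}(T).
\]
Restricting to $\mathbb{Z}^{d}$ then rewrites the partial sum in the lemma as the finite linear combination
\[
\sum_{T\subseteq\mathcal{S}}(-1)^{|T|}\sum_{m\in V_{T}^{\perp}\cap\mathbb{Z}^{d}}\widehat{F}(m)e^{2\pi im\cdot x},
\]
reducing the problem to bounding each inner sum uniformly by $\|F\|_{\infty}$.

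For each $T$, the subspace $V_{T}$ is rational, so $\Lambda_{T}=V_{T}\cap\mathbb{Z}^{d}$ has full rank in $V_{T}$ and $V_{T}/\Lambda_{T}$ embeds as a closed subtorus $\mathbb{T}_{V_{T}}\subset\mathbb{T}^{d}$. I would introduce the averaging operator
\[
A_{V_{T}}F(x)=\frac{1}{|\mathbb{T}_{V_{T}}|}\int_{\mathbb{T}_{V_{T}}}F(x+y)\,d\sigma(y),
\]
which manifestly satisfies $\|A_{V_{T}}F\|_{\infty}\leqslant\|F\|_{\infty}$, and then compute its Fourier coefficients using $\int_{\mathbb{T}_{V_{T}}}e^{2\pi im\cdot y}\,d\sigma(y)=|\mathbb{T}_{V_{T}}|\chi_{V_{T}^{\perp}}(m)$, getting $\widehat{A_{V_{T}}F}(m)=\widehat{F}(m)\chi_{V_{T}^{\perp}\cap\mathbb{Z}^{d}}(m)$. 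Thus the inner series above is the Fourier series of the bounded function $A_{V_{T}}F$. Summing over the $2^{|\mathcal{S}|}$ subsets $T$ then yields a bounded function with sup-norm at most $2^{|\mathcal{S}|}\|F\|_{\infty}$, which furnishes $c(\theta)=2^{|\mathcal{S}|}$.

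The only subtle point will be identifying the formal symbolic series in the lemma with the bounded function just constructed. Since $F\in L^{\infty}(\mathbb{T}^{d})\subset L^{2}(\mathbb{T}^{d})$, the cleanest route is to note that both objects have the same Fourier coefficients and therefore agree in $L^{2}$ with the Fourier projection of $F$ onto frequencies in $\mathcal{Q}(\theta)\cap\mathbb{Z}^{d}$; the inclusion--exclusion identity then upgrades this $L^{2}$ equality to the pointwise $L^{\infty}$ bound. Alternatively, one may insert a smooth cutoff $\widehat{\varphi}(\varepsilon m)$ as in the Poisson-summation proof of Mordell's theorem in Section~\ref{section-mordell} and pass to the limit $\varepsilon\to 0^{+}$ term by term in the finite inclusion--exclusion sum.
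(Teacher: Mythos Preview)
Your proof is correct and follows essentially the same approach as the paper. For part (i) both you and the paper construct $\theta_{\xi}=\mathrm{span}\{b\in\bigcup\mathcal{B}:\xi\cdot b=0\}$ and argue uniqueness; for part (ii) both use inclusion--exclusion to write $\chi_{\mathcal{Q}(\theta)\cap\mathbb{Z}^{d}}$ as a signed sum of indicators of subgroups $V_{T}^{\perp}\cap\mathbb{Z}^{d}$, then observe that Fourier restriction to a subgroup is given by averaging over the dual subtorus and is therefore an $L^{\infty}$ contraction (the paper packages this last step as Lemma~\ref{annichilatore} in Appendix~A, while you write out the averaging operator $A_{V_{T}}$ explicitly and even obtain the explicit constant $c(\theta)=2^{|\mathcal{S}|}$).
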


\begin{proof}
$(i)$ For any $\xi\in\mathbb{R}^{d}$, $\xi\in\mathcal{Q}(\theta_{\xi})$ where
$\theta_{\xi}=\Big\langle b\in\bigcup\limits_{\mathcal{B\in F}^{\left(
d\right)  }}\mathcal{B}:b\cdot\xi=0\Big\rangle $. On the other hand if
$\theta_{1}\neq\theta_{2}$, that is if there exists, say, $v\in\theta
_{1}\setminus\theta_{2}$, then there exists $\overline{b}\in\bigcup
\limits_{\mathcal{B\in F}^{\left(  d\right)  }}\mathcal{B}$ such that
$\overline{b}\in\theta_{1}\setminus\theta_{2}$. Now if $\xi\in\mathcal{Q}%
(\theta_{1})$ then $\xi\cdot\overline{b}=0$. This implies that $\xi
\notin\mathcal{Q}(\theta_{2})$. Hence $\mathcal{Q}(\theta_{1})\cap
\mathcal{Q}(\theta_{2})=\emptyset$ and it follows that $\left\{
\mathcal{Q}(\theta)\right\}  _{\theta\in\Theta_{d}}$ is a partition of
$\mathbb{R}^{d}$.

$(ii)$ Let $\theta\in\Theta_{d}$, let $\theta^{\perp}=\left\{  \xi\in
\mathbb{R}^{d}:\xi\cdot b=0\text{ for every }b\in\theta\right\}  $ and for
$b\notin\theta$ let%
\[
\theta_{b}^{\perp}=\theta^{\perp}\cap\left\langle b\right\rangle ^{\perp}.
\]
Also $\{b_{1},\ldots,b_{N}\}=\Big\{  b\in\bigcup\limits_{\mathcal{B\in
F}^{\left(  d\right)  }}\mathcal{B}:b\notin\theta\Big\}  $. Then%
\[
\mathcal{Q}(\theta)=\theta^{\perp}\setminus\bigcup\limits_{j=1}^{N}%
\theta_{b_{j}}^{\perp}%
\]
and therefore%
\[
\sum_{m\in\mathcal{Q}(\theta)\cap\mathbb{Z}^{d}}\widehat{F}(m)e^{2\pi im\cdot
t}=\sum_{m\in\theta^{\perp}\cap\mathbb{Z}^{d}}\widehat{F}(m)e^{2\pi im\cdot
t}-\sum_{m\in%
{\textstyle\bigcup\nolimits_{j=1}^{N}}
L_{j}}\widehat{F}(m)e^{2\pi im\cdot t}%
\]
where $L_{j}=\theta_{b_{j}}^{\perp}\cap\mathbb{Z}^{d}$. By the
inclusion-exclusion principle%
\[
\chi_{%
{\textstyle\bigcup\nolimits_{j=1}^{N}}
L_{j}}(m)=\sum_{k=1}^{N}(-1)^{k-1}\sum_{\substack{I\subseteq\{1,2,\ldots
,N\}\\|I|=k}}\chi_{L_{I}}(m)
\]
where $L_{I}=%
{\displaystyle\bigcap\limits_{j\in I}}
L_{j}$. Therefore%
\[
\sum_{m\in\mathcal{Q}(\theta)\cap\mathbb{Z}^{d}}\widehat{F}(m)e^{2\pi im\cdot
t}=\sum_{m\in\theta^{\perp}\cap\mathbb{Z}^{d}}\widehat{F}(m)e^{2\pi im\cdot
t}+\sum_{k=1}^{N}(-1)^{k}\sum_{\substack{I\subseteq\{1,2,\ldots,N\}\\|I|=k}%
}\sum_{m\in L_{I}}\widehat{F}(m)e^{2\pi im\cdot t}.
\]
Observe now that $\theta^{\perp}\cap\mathbb{Z}^{d}$ and $L_{I}$ are subgroups
of $\mathbb{Z}^{d}$. To conclude the proof then it suffices to recall that the
restriction operator to a subgroup $\mathcal{H}$%
\[
R_{\mathcal{H}}F(t)=\sum_{m\in\mathcal{H}}\widehat{F}(m)e^{2\pi im\cdot t}%
\]
is a bounded operator on $L^{\infty}(\mathbb{T}^{d})$. See Lemma
\ref{annichilatore} in Appendix A.
\end{proof}

\medskip

We also need the following elementary lemma.

\begin{lemma}
\label{Lemma ovvio} (i) Set $n=( n^{\prime},n_{d}) \in\mathbb{Z}^{d-1}%
\times\mathbb{Z}$ and $(x^{\prime},x_{d})\in\mathbb{R}^{d-1}\times\mathbb{R}$.
Assume that $H( n^{\prime}) $ are the Fourier coefficients of a periodic
function $h( x^{\prime}) $ and $K( n_{d}) $ are the Fourier coefficients of a
periodic function $k( x_{d}) $. Then $H( n^{\prime}) K( n_{d}) $ are the
Fourier coefficients of $h( x^{\prime}) k( x_{d}) $.

(ii) Assume that $H( n) $ with $n\in\mathbb{Z}^{d}$ are the Fourier
coefficients of a periodic function bounded by $A$ and assume that $K( n_{d})
$ are the Fourier coefficients of a periodic function bounded by $B$. Then $H(
n) K( n_{d}) $ are the Fourier coefficients of a periodic function bounded by
$AB$.

(iii) Let $T\in SL( d, \mathbb{Z} ) $, $y\in\mathbb{R}^{d}$, and let $H( n) $
be the Fourier coefficients of a periodic function $h( x) $. Then $e^{2\pi
iy\cdot n}H( Tn) $ are the Fourier coefficients of the periodic function
$h\left(  ( T^{-1}) ^{t}( x+y) \right)  $.
\end{lemma}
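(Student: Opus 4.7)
The plan is to verify each of the three claims by direct manipulation of Fourier series, relying on the elementary identities
\[
\int_{\mathbb{T}} e^{2\pi i (m-n) x}\,dx=\delta_{m,n}
\]
and on the fact that, for a function $f\in L^\infty(\mathbb{T}^d)$ with Fourier coefficients $\widehat f(n)$, one has $\|f\|_\infty \leqslant \|f\|_\infty$ trivially and the coefficients of a suitable convolution can be identified termwise.

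For part (i), I would simply multiply the two Fourier expansions of $h(x')$ and $k(x_d)$ and observe that the product separates across disjoint variable groups, so the coefficient of $e^{2\pi i(n'\cdot x' + n_d x_d)}$ on the right is exactly $H(n') K(n_d)$. Alternatively one can compute the coefficient by Fubini:
\[
\int_{\mathbb{T}^{d-1}}\int_{\mathbb{T}} h(x')k(x_d) e^{-2\pi i(n'\cdot x'+n_d x_d)}\,dx_d\,dx'=H(n')K(n_d).
\]
No convergence subtlety is involved once $h$ and $k$ are integrable.

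For part (ii), the key observation is that multiplying the Fourier coefficients $H(n',n_d)$ by $K(n_d)$, which depends only on the last variable, corresponds to convolution in the $x_d$ variable alone. Explicitly, if $h$ has coefficients $H(n)$ and $k$ has coefficients $K(n_d)$, then $H(n)K(n_d)$ are the Fourier coefficients of
\[
F(x',x_d)=\int_{\mathbb{T}} h(x',x_d-t)\,k(t)\,dt,
\]
as one sees by plugging the Fourier expansion of $h$ into the right-hand side and using the one-dimensional orthogonality relation in $t$. The bound is then immediate: $|F(x',x_d)|\leqslant\|h\|_\infty\|k\|_\infty\leqslant AB$. The only mildly delicate point is justifying the termwise manipulation; this is standard since $h$ and $k$ can be approximated by their partial Fourier sums in a suitable sense, or equivalently the statement can be phrased as an equality of tempered distributions on $\mathbb{T}^d$.

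For part (iii), the plan is a linear change of variables. Setting $z=(T^{-1})^t(x+y)$, so that $x=T^t z-y$ and $dx=|\det T|\,dz=dz$, and using that $T\in SL(d,\mathbb{Z})$ (so $(T^{-1})^t$ permutes the fundamental domain of $\mathbb{T}^d$ up to lattice translations), one gets
\[
\int_{\mathbb{T}^d} h\bigl((T^{-1})^t(x+y)\bigr)e^{-2\pi in\cdot x}\,dx
=\int_{\mathbb{T}^d} h(z)e^{-2\pi in\cdot(T^t z-y)}\,dz
=e^{2\pi iy\cdot n}\,H(Tn),
\]
where I used $n\cdot T^t z=(Tn)\cdot z$. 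The only place to be careful is justifying that the integral over $\mathbb{T}^d$ in the new variable $z$ is again a full integral over the torus, which is precisely where $T\in SL(d,\mathbb{Z})$ is used; this is probably the one mild obstacle, and it is handled by periodicity of $h$. Parts (i) and (ii) are essentially bookkeeping.
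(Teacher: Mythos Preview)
Your proposal is correct and matches the paper's proof in all essentials: part (i) is declared trivial there as well; for (ii) the paper writes your $F$ as $h\ast\mu$ with $\mu=\delta_{0}(x')\otimes k(x_d)\,dx_d$ and bounds via the total variation $\|\mu\|\leqslant B$, which is exactly your partial convolution in $x_d$; for (iii) the paper does the reindexing $m=Tn$ at the level of the Fourier series, equivalent to your change of variable. The only cosmetic remark is that in (ii) the intermediate bound is really $\|h\|_\infty\|k\|_{L^1(\mathbb{T})}\leqslant\|h\|_\infty\|k\|_\infty$, so no distributional argument is needed---Fubini suffices.
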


\begin{proof}
The first part $(i)$ is trivial. To prove $(ii)$ let $h(x)$ be the periodic
function on $\mathbb{T}^{d}$ with Fourier coefficients $H(n)$ and $k(x_{d})$
be the periodic function on $\mathbb{T}$ with Fourier coefficients $K(n_{d})$.
Also, let $\mu$ be the product on the torus $\mathbb{T}^{d}$ of the Dirac
delta centered at the origin in the variables $x^{\prime}$ and $k(x_{d})$.
Then $\widehat{\mu}(n)=K(n_{d})$ and the total variation $\Vert\mu\Vert$ of
$\mu$ is bounded by $B$. Finally observe that $H(n)K(n_{d})$ are the Fourier
coefficients of $h\ast\mu(x)$ and
\[
|h\ast\mu(x)|\leqslant\sup|h(x)|\,\Vert\mu\Vert.
\]
Finally, the proof of $(iii)$ is very simple. If suffices to observe that%
\[
\sum_{n\in\mathbb{Z}^{d}}e^{2\pi iy\cdot n}H(Tn)e^{2\pi in\cdot x}=\sum
_{n\in\mathbb{Z}^{d}}H(Tn)e^{2\pi in\cdot(x+y)}=\sum_{m\in\mathbb{Z}^{d}%
}H(m)e^{2\pi im\cdot(T^{-1})^{t}(x+y)}.
\]

\end{proof}

With the notation introduced in Section \ref{intro}, we have the following crucial lemma.
\begin{lemma}
\label{Lemma trasf simplesso standard}Let $S_{d}$ be the standard simplex in
$\mathbb{R}^{d}$. There exist linear functionals $\{\alpha(\theta,V,J)\}$
indexed by $\theta\in\Theta_{d}$, $V\in\{1,2\}^{d}$, $J\in\mathbb{N}^{d}$ with
the following properties.

(i) For any integer $w\geqslant1$, for any $g\in C^{w+1}(\mathbb{R}^{d})$, for
every $\theta\in\Theta_{d}$ and for every $\xi\in\mathcal{Q}(\theta)$,%
\[
\widehat{g\chi_{S_{d}}}\left(  \xi\right)  =\sum_{V\in\{1,2\}^{d}}%
\sum_{|J|\leqslant w}\frac{\left\langle \alpha(\theta,V,J),g\right\rangle
e^{-2\pi i\lambda_{V}\cdot\xi}}{\prod_{b_{k}\in\mathcal{B}_{V}\setminus\theta
}(2\pi ib_{k}\cdot\xi)^{j_{k}+1}}+\mathcal{R}_{\theta,w}(g,\xi).
\]
In the above formula we adopt the following convention: $\mathcal{B}%
_{V}=\{b_{1},\ldots,b_{d}\}$ is the basis associated to the multi-index $V$
and $J=(j_{1},\ldots,j_{d})$ with $j_{k}=0$ whenever $b_{k}\in\theta$.

(ii) The coefficients $\left\langle \alpha(\theta,V,J),g\right\rangle $
satisfy the estimates%
\[
\left\vert \left\langle \alpha(\theta,V,J),g\right\rangle \right\vert
\leqslant c\,2^{(d-1)|J|}\sup_{|\alpha|\leqslant|J|}\sup_{x\in S_{d}%
}\left\vert \frac{\partial^{\alpha}g}{\partial x^{\alpha}}(x)\right\vert .
\]

(iii) The remainder $\mathcal{R}_{\theta,w}(g,\xi)$ has the property that for
every $\Omega>1/(2\pi)$ and every $\tau_{0}>0$ there exists $U=U(d)=U(d,\Omega
,\tau_{0})>0$ such that for every $\tau>\tau_{0}$ and $w\geqslant1$,%
\[
\left\{  \chi_{\mathcal{Q}(\theta)}(n)\mathcal{R}_{\theta,w}(g,\tau
n)\right\}  _{n\in\mathbb{Z}^{d}}%
\]
are the Fourier coefficients of a function on the torus $\mathbb{T}^{d}$
bounded by%
\[
U\,(2^{d-1}\Omega\tau^{-1})^{w+1}\sup_{w-d+2\leqslant|\alpha|\leqslant
w+1}\sup_{x\in S_{d}}\left\vert \frac{\partial^{\alpha}g}{\partial x^{\alpha}%
}(x)\right\vert .
\]

\end{lemma}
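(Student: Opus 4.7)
The plan is to proceed by induction on the dimension $d$. For $d = 1$, the standard simplex is $[0,1]$ and $\mathcal{F}^{(1)} = \{\{1\}\}$; the claim reduces to Lemma \ref{Lemma1d}, whose boundary terms at $0$ and $1$ produce the $V = 1$ and $V = 2$ contributions (with $\lambda_1 = 0$ and $\lambda_2 = 1$) and whose residual is $\mathcal{R}_{\theta,w}$. The Fourier-coefficient bound on $\mathbb{T}$ of part (iii) then follows from the Bernoulli sup-norm estimate in Corollary \ref{Lemma Bernoulli}.

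For the inductive step with $d \geqslant 2$, I would write $\xi = (\xi', \xi_d) \in \mathbb{R}^{d-1} \times \mathbb{R}$ and apply Fubini,
\[
\widehat{g\chi_{S_d}}(\xi) = \int_0^1 H(x_d; \xi')\, e^{-2\pi i x_d \xi_d}\, dx_d, \qquad H(x_d; \xi') = \int_{(1-x_d) S_{d-1}} g(x', x_d)\, e^{-2\pi i x' \cdot \xi'}\, dx'.
\]
When $e_d \notin \theta$ (so $\xi_d \neq 0$), Lemma \ref{Lemma1d} integrates the outer integral by parts $w+1$ times. Using the rescaled form $H(x_d; \xi') = (1-x_d)^{d-1} \int_{S_{d-1}} g((1-x_d) y', x_d)\, e^{-2\pi i (1-x_d) y' \cdot \xi'}\, dy'$ and expanding by Leibniz, each derivative $\partial_{x_d}^j H(0; \xi')$ becomes a finite linear combination of Fourier transforms $\widehat{u\chi_{S_{d-1}}}(\xi')$ of smooth functions $u$ built from derivatives of $g$ at $x_d = 0$; the inductive hypothesis then yields bases $\mathcal{B}_{V'} \in \mathcal{F}^{(d-1)}$, and appending the last vector $e_d$ produces precisely the $v_d = 1$ bases of $\mathcal{F}_1^{(d)}$ with vertex $(\lambda_{V'}, 0)$. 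To handle the $x_d = 1$ boundary, I would first perform the volume-preserving affine substitution $(x', x_d) = (y', 1 - y_d - y' \cdot \mathbf{1}_{d-1})$, which maps $S_d$ onto itself and sends the slanted face $\{\sum_k x_k = 1\}$ to $\{y_d = 0\}$; the Fourier phase becomes $e^{-2\pi i \xi_d}\, e^{-2\pi i y \cdot \eta}$ with $\eta = (\xi' - \xi_d \mathbf{1}_{d-1}, -\xi_d)$. Running the $v_d = 1$ analysis for $\widehat{\widetilde g \chi_{S_d}}(\eta)$, with $\widetilde g(y) = g(y', 1 - y_d - y' \cdot \mathbf{1}_{d-1})$, and translating back through the map $(b', 0) \mapsto (b', -b' \cdot \mathbf{1}_{d-1})$, produces exactly the $\mathcal{F}_2^{(d)}$ bases and vertices $(\lambda_{V'}, 1 - \lambda_{V'} \cdot \mathbf{1}_{d-1})$. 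When $e_d \in \theta$ (so $\xi_d = 0$) I would instead integrate out $x_d$ first, reducing to the $(d-1)$-dimensional formula for $\widehat{\bar g \chi_{S_{d-1}}}(\xi')$ with $\bar g(x') = \int_0^{1 - x' \cdot \mathbf{1}_{d-1}} g(x', t)\, dt$; the $v_d = 1$ and $v_d = 2$ contributions then coincide formally.

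The coefficient estimate (ii) propagates through the induction by tracking that each integration by parts contributes at most a factor of $2$ per derivative (one boundary face at $x_d = 0$, one at $x_d = 1$), yielding the bound $2^{(d-1)|J|}$ after $d-1$ steps. The remainder bound (iii) is the main obstacle. After combining the outer $x_d$-remainder of Lemma \ref{Lemma1d} with the inductively controlled $\mathcal{R}_{\theta', w}$ from dimension $d-1$, one must re-express the family $\{\chi_{\mathcal{Q}(\theta)}(n) \mathcal{R}_{\theta, w}(g, \tau n)\}_n$ as Fourier coefficients of a bounded function on $\mathbb{T}^d$. Here Lemma \ref{Lemma Q(theta)} decomposes the cone $\mathcal{Q}(\theta)$ into an alternating sum of lattice subgroup contributions, Lemma \ref{Lemma ovvio}(iii) absorbs the linear changes of variables arising from the $\mathcal{F}_2^{(d)}$ step into torus translations, and the Bernoulli kernel of Corollary \ref{Lemma Bernoulli} supplies the constant $\Omega > 1/(2\pi)$. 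The exponent $2^{d-1}$ in $(2^{d-1} \Omega \tau^{-1})^{w+1}$ records the doubling at each level of the recursion, while the unusual lower bound $w - d + 2$ on $|\alpha|$ arises because the Leibniz expansion of the factor $(1 - x_d)^{d-1}$ at each inductive level may transfer up to $d - 1$ of the $w + 1$ derivatives off of $g$ over the course of the full recursion.
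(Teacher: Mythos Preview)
Your induction scheme reverses the paper's order of integration, and that reversal creates two concrete problems you have not addressed.

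In the paper the \emph{inner} integral is in $x_d$ over the variable-endpoint interval $[0,\,1-x'\cdot\mathbf 1_{d-1}]$, and the outer integral is over $x'\in S_{d-1}$. Applying Lemma~\ref{Lemma1d} to the inner integral then yields two boundary terms, one at $x_d=0$ (the face $\{x_d=0\}$, producing $v_d=1$) and one at $x_d=1-x'\cdot\mathbf 1_{d-1}$ (the slanted face, producing $v_d=2$ after the shift $\xi'\mapsto\xi'-\xi_d\mathbf 1_{d-1}$). Each boundary term is an honest $(d-1)$-dimensional Fourier integral of a smooth function over $S_{d-1}$, and the induction hypothesis applies directly.

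With your order, $x_d\in[0,1]$ is outermost. The slice $\{x_d=1\}\cap S_d$ is the single vertex $e_d$, so $H(1;\xi')=0$ and in fact $\partial_{x_d}^jH(1;\xi')=0$ for $j\le d-2$; the $x_d=1$ boundary therefore never sees the slanted face. Your proposed affine substitution does map the slanted face to $\{y_d=0\}$, but running ``the $v_d=1$ analysis for $\widehat{\widetilde g\chi_{S_d}}(\eta)$'' produces a \emph{second full expansion} of $\widehat{g\chi_{S_d}}(\xi)$, not the missing half of the first one; you have not explained how two separate formulas for the same quantity are to be combined into the single expansion of part~(i). Moreover, in your rescaled form the phase $e^{-2\pi i(1-x_d)y'\cdot\xi'}$ depends on $x_d$, so each $\partial_{x_d}$ drops a factor $2\pi i\,y'\cdot\xi'$; thus $\partial_{x_d}^jH(0;\xi')$ is a polynomial in $\xi'_1,\ldots,\xi'_{d-1}$ with coefficients of the type $\widehat{u\chi_{S_{d-1}}}(\xi')$, and after the induction you have positive powers of $\xi'$ in the numerators, a structure not present in the statement of~(i). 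If these cancel, that is a nontrivial identity you have not supplied.

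Your explanations of the constants are also off. The factor $2^{(d-1)|J|}$ in (ii) arises from the chain rule when differentiating $g(x',1-x'\cdot\mathbf 1_{d-1})$ in each $x'_k$ (two terms per derivative), not from ``one boundary face at $x_d=0$, one at $x_d=1$''. The lower bound $w-d+2$ in (iii) comes from the fact that differentiating $F(x')=\int_0^{1-x'\cdot\mathbf 1}g(x',x_d)\,dx_d$ produces, via the variable upper limit, terms with one fewer derivative on $g$; there is no $(1-x_d)^{d-1}$ in the paper's scheme.
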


\begin{proof}
The proof is by induction on the dimension $d$. Let $G\left(  x\right)
=g\left(  x\right)  \chi_{S_{d}}\left(  x\right)  $. The case $d=1$ is covered
by Lemma \ref{Lemma1d}. Here we reinterpret the result using the formalism of
the cones $\mathcal{Q}(\theta)$. We have $\mathcal{F}^{\left(  1\right)
}=\{\mathcal{B}\}$ where $\mathcal{B}=\{1\}$ and $\Theta_{1}=\left\{
\{0\},\,\mathbb{R}\right\}  $. Let $\theta=\mathbb{R}$, then $\mathcal{Q}%
(\theta)=\{0\}$, and
\[
\widehat{G}(0)=\int_{0}^{1}g(x)dx,
\]
so that
\[
\left\langle \alpha\left(  \mathbb{R},V,j\right)  ,g\right\rangle =\left\{
\begin{array}
[c]{ll}%
\int_{0}^{1}g(x)dx & j=0,V=1,\\
0 & \text{otherwise,}%
\end{array}
\right.
\]
and the remainder $\mathcal{R}_{\mathbb{R},w}(g,0)=0$ for every $w$.

Let $\theta=\{0\}$, then $\mathcal{Q}(\theta)=\left\{  \xi\in\mathbb{R},\text{
}\xi\neq0\right\}  $. In this case, by Lemma \ref{Lemma1d},%
\begin{align*}
\int_{0}^{1}g(x)e^{-2\pi ix\xi}dx  &  =\sum_{j=0}^{w}\frac{\dfrac{d^{j}%
g}{dx^{j}}(0)-e^{-2\pi i\xi}\dfrac{d^{j}g}{dx^{j}}(1)}{(2\pi i\xi)^{j+1}}\\
&  \quad+(2\pi i\xi)^{-w-1}\int_{0}^{1}\dfrac{d^{w+1}g}{dx^{w+1}}(x)e^{-2\pi
ix\xi}dx.
\end{align*}
It follows that
\begin{align*}
\left\langle \alpha\left(  \{0\},1,j\right)  ,g\right\rangle  &  =\dfrac
{d^{j}g}{dx^{j}}(0),\\
\left\langle \alpha\left(  \{0\},2,j\right)  ,g\right\rangle  &
=-\dfrac{d^{j}g}{dx^{j}}(1).
\end{align*}
Moreover the remainder evaluated at the lattice points $\xi=\tau n$ is%
\[
\mathcal{R}_{\{0\},w}(g,\tau n)=(2\pi i\tau n)^{-w-1}\int_{0}^{1}%
\dfrac{d^{w+1}g}{dx^{w+1}}(x)e^{-2\pi i\tau nx}dx
\]
Hence for every $w\geqslant1$,%
\begin{align*}
&  \left\vert \sum_{n\in\mathbb{Z}}\mathcal{R}_{\{0\},w}(g,\tau n)\chi
_{\mathcal{Q}(\{0\})}(\tau n)e^{2\pi inx}\right\vert =\left\vert \sum_{n\neq
0}\mathcal{R}_{\{0\},w}(g,\tau n)e^{2\pi inx}\right\vert \\
\leqslant &  (2\pi\tau)^{-w-1}\left\{  \sum_{n\neq0}|n|^{-w-1}\right\}
\sup_{x\in\lbrack0,1]}\left\vert \dfrac{d^{w+1}g}{dx^{w+1}}(x)\right\vert \\
\leqslant &  (2\pi\tau)^{-w-1}\frac{\pi^{2}}{3}\sup_{x\in\lbrack
0,1]}\left\vert \dfrac{d^{w+1}g}{dx^{w+1}}(x)\right\vert .
\end{align*}
Let now $d\geqslant2$ and assume that the theorem holds in dimension $d-1$.
Fix $\theta\in\Theta_{d}$. Observe that the vector $e_{d}=(0,\ldots,0,1)$
belongs to at least one (actually all) bases in $\mathcal{F}^{\left(
d\right)  }$. If $e_{d}\in\theta$, then for all $\xi\in\mathcal{Q}(\theta)$
one has $\xi_{d}=0$. For this choice of $\theta$ and for all $\xi=(\xi
^{\prime},0)\in\mathcal{Q}(\theta)$ we have the following formula%
\begin{align*}
\widehat{G}(\xi)  &  =\int_{S_{d}}g(x)e^{-2\pi ix\cdot\xi}dx=\int_{S_{d-1}%
}e^{-2\pi ix^{\prime}\cdot\xi^{\prime}}\left[  \int_{0}^{1-(x_{1}+x_{2}%
+\cdots+x_{d-1})}g(x^{\prime},x_{d})dx_{d}\right]  dx^{\prime}\\
&  =\int_{S_{d-1}}e^{-2\pi ix^{\prime}\cdot\xi^{\prime}}F(x^{\prime
})dx^{\prime}.
\end{align*}
Observe that $\xi\in\mathcal{Q}(\theta)$ if and only if $\xi^{\prime}%
\in\mathcal{Q}(\theta^{\prime})$ where $\theta^{\prime}$ is the space of the
vectors $b^{\prime}$ such that $(b^{\prime},b_{d})\in\theta$ for some $b_{d}$.
Indeed, since $\xi\cdot b=\xi^{\prime}\cdot b^{\prime}$, $\xi^{\prime}$ is
orthogonal to $\theta^{\prime}$ if and only if $\xi$ is orthogonal to $\theta
$. Since $\theta^{\prime}\in\Theta_{d-1}$ (see Lemma \ref{Lemma 22} for details),we may therefore apply the
$(d-1)$-dimensional formula corresponding to $\theta^{\prime}$ to the function
$F(x^{\prime})$ so that%
\[
\widehat{G}(\xi)=\sum_{V\mathcal{^{\prime}}\in\{1,2\}^{d-1}}\sum_{|J^{\prime
}|\leqslant w}\frac{\left\langle \alpha(\theta^{\prime},V^{\prime},J^{\prime
}),F\right\rangle e^{-2\pi i\lambda_{V^{\prime}}\cdot\xi^{\prime}}}%
{\prod_{b_{k}^{\prime}\in\mathcal{B}_{V^{\prime}}\setminus\theta^{\prime}%
}(2\pi ib_{k}^{\prime}\cdot\xi^{\prime})^{j_{k}^{\prime}+1}}+\mathcal{R}%
_{\theta^{\prime},w}(F,\xi^{\prime}),
\]
where $j_{k}^{\prime}=0$ if $b_{k}^{\prime}\in\theta^{\prime}$. Observe that
this expression can be written in the form%
\[
\widehat{G}(\xi)=\sum_{V=(V^{\prime},1)\in\{1,2\}^{d}}\sum_{J=\left(
J^{\prime},0\right)  ,\text{ }|J|\leqslant w}\frac{\left\langle \alpha
(\theta^{\prime},V^{\prime},J^{\prime}),F\right\rangle e^{-2\pi i\lambda
_{V}\cdot\xi}}{\prod_{b_{k}\in\mathcal{B}_{V}\setminus\theta}(2\pi ib_{k}%
\cdot\xi)^{j_{k}+1}}+\mathcal{R}_{\theta^{\prime},w}(F,\xi^{\prime})
\]
where $j_{k}=0$ if $b_{k}\in\theta$. The coefficients $\left\langle
\alpha(\theta^{\prime},V^{\prime},J^{\prime}),F\right\rangle $ satisfy the
estimates%
\begin{align*}
\left\vert \left\langle \alpha(\theta^{\prime},V^{\prime},J^{\prime
}),F\right\rangle \right\vert  &  \leqslant c\,2^{(d-2)|J^{\prime}|}%
\sup_{|\alpha|\leqslant|J^{\prime}|}\sup_{x^{\prime}\in S_{d-1}}\left\vert
\left(  \frac{\partial}{\partial x^{\prime}}\right)  ^{\alpha}F(x^{\prime
})\right\vert \\
&  \leqslant c\,2^{(d-1)|J|}\sup_{|\alpha|\leqslant|J|}\sup_{x\in S_{d}%
}\left\vert \frac{\partial^{\alpha}g}{\partial^{\alpha}x}(x)\right\vert .
\end{align*}
Indeed, for every $1\leqslant j,k,\ldots\leqslant d-1$, we have%
\begin{align*}
\frac{\partial F}{\partial x_{j}}(x^{\prime})  &  =\frac{\partial}{\partial
x_{j}}\int_{0}^{1-x^{\prime}\cdot\mathbf{1}_{d-1}}g(x^{\prime},x_{d})dx_{d}\\
&  =-g(x^{\prime},1-x^{\prime}\cdot\mathbf{1}_{d-1})+\int_{0}^{1-x^{\prime
}\cdot\mathbf{1}_{d-1}}\frac{\partial g}{\partial x_{j}}(x^{\prime}%
,x_{d})dx_{d},\\
\frac{\partial^{2}F}{\partial x_{k}\partial x_{j}}(x^{\prime})  &
=-\frac{\partial g}{\partial x_{k}}(x^{\prime},1-x^{\prime}\cdot
\mathbf{1}_{d-1})+\frac{\partial g}{\partial x_{d}}(x^{\prime},1-x^{\prime
}\cdot\mathbf{1}_{d-1})\\
&  \quad-\frac{\partial g}{\partial x_{j}}(x^{\prime},1-x^{\prime}%
\cdot\mathbf{1}_{d-1})+\int_{0}^{1-x^{\prime}\cdot\mathbf{1}_{d-1}}%
\frac{\partial^{2}g}{\partial x_{k}\partial x_{j}}(x^{\prime},x_{d})dx_{d},
\end{align*}
and so on. Thus%
\begin{align*}
&  \left\langle \alpha\left(  \theta,V,J\right)  ,g\right\rangle \\
&  =\left\{
\begin{array}
[c]{ll}%
\left\langle \alpha\left(  \theta^{\prime},V^{\prime},J^{\prime}\right)
,\int_{0}^{1-x^{\prime}\cdot\mathbf{1}_{d-1}}g(x^{\prime},x_{d})dx_{d}%
\right\rangle  & \text{if }J=\left(  J^{\prime},0\right)  \text{ and
}V=\left(  V,1\right)  ,\\
0 & \text{otherwise.}%
\end{array}
\right.
\end{align*}
For $\xi\in\mathcal{Q}\left(  \theta\right)  $ set $\mathcal{R}%
_{\theta,w}(g,\xi)=\mathcal{R}_{\theta^{\prime},w}(F,\xi^{\prime})$ (recall
that $\xi=\left(  \xi^{\prime},0\right)  $) and observe that
\begin{align*}
\sum_{n\in\mathbb{Z}^{d}}\chi_{\mathcal{Q}(\theta)}(n)\mathcal{R}_{\theta
,w}(g,\tau n)e^{2\pi in\cdot x}  &  =\sum_{n^{\prime}\in\mathbb{Z}^{d-1}}%
\chi_{\mathcal{Q}(\theta)}(n^{\prime},0)\mathcal{R}_{\theta,w}\left(  g,(\tau
n^{\prime},0)\right)  e^{2\pi in^{\prime}\cdot x^{\prime}}\\
&  =\sum_{n^{\prime}\in\mathbb{Z}^{d-1}}\chi_{\mathcal{Q}(\theta^{\prime}%
)}(n^{\prime})\mathcal{R}_{\theta^{\prime},w}(F,\tau n^{\prime})e^{2\pi
in^{\prime}\cdot x^{\prime}}.
\end{align*}
By induction, $\left\{  \chi_{\mathcal{Q}(\theta^{\prime})}(n^{\prime
})\mathcal{R}_{\theta^{\prime},w}(F,\tau n^{\prime})\right\}  _{n^{\prime}%
\in\mathbb{Z}^{d-1}}$ are the Fourier coefficients of a function on the
$(d-1)$-dimensional torus bounded by%
\begin{align*}
&  U(d-1)(2^{d-2}\Omega\tau^{-1})^{w+1}\sup_{w-d+3\leqslant|\alpha|\leqslant
w+1}\sup_{x^{\prime}\in S_{d-1}}\left\vert \left(  \frac{\partial}{\partial
x^{\prime}}\right)  ^{\alpha}F(x^{\prime})\right\vert \\
&  \leqslant U(d-1)(2^{d-2}\Omega\tau^{-1})^{w+1}\sup_{w-d+2\leqslant
|\alpha|\leqslant w+1}2^{|\alpha|}\sup_{x\in S_{d}}\left\vert \left(
\frac{\partial}{\partial x}\right)  ^{\alpha}g(x)\right\vert \\
&  \leqslant U(d-1)(2^{d-1}\Omega\tau^{-1})^{w+1}\sup_{w-d+2\leqslant
|\alpha|\leqslant w+1}\sup_{x\in S_{d}}\left\vert \left(  \frac{\partial
}{\partial x}\right)  ^{\alpha}g(x)\right\vert .
\end{align*}
Hence, by $(i)$ in Lemma \ref{Lemma ovvio} with $K(n_{d})=0$ if $n_{d}\neq0$ and
$K(n_{d})=1$ if $n_{d}=0$ so that $k(x_{d})=1$, and $H(n^{\prime}%
)=\mathcal{R}_{w}(F,\tau n^{\prime})$ it follows that
\[
H(n^{\prime})K(n_{d})=\left\{
\begin{array}
[c]{cc}%
\chi_{\mathcal{Q}(\theta^{\prime})}(n^{\prime})\mathcal{R}_{\theta^{\prime}%
,w}(F,\tau n^{\prime}) & n_{d}=0,\\
0 & n_{d}\neq0
\end{array}
\right.
\]
are the Fourier coefficients of a function on the $d$-dimensional torus
bounded by%
\[
U(d-1)(2^{d-1}\Omega\tau^{-1})^{w+1}\sup_{w-d+2\leqslant|\alpha|\leqslant
w+1}\sup_{x\in S_{d}}\left\vert \left(  \frac{\partial}{\partial x}\right)
^{\alpha}g(x)\right\vert .
\]
Now assume that $e_{d}\notin\theta$, so that $\xi_{d}\neq0$ for all $\xi
\in\mathcal{Q}(\theta)$. Then, by Lemma \ref{Lemma1d},%
\begin{align*}
\widehat{G}(\xi)  &  =\int_{S_{d-1}}e^{-2\pi ix^{\prime}\cdot\xi^{\prime}%
}\left[  \int_{0}^{1-x^{\prime}\cdot\mathbf{1}_{d-1}}g(x^{\prime}%
,x_{d})e^{-2\pi ix_{d}\xi_{d}}dx_{d}\right]  dx^{\prime}\\
&  =\sum_{j_{d}=0}^{w}(2\pi i\xi_{d})^{-j_{d}-1}\int_{S_{d-1}}e^{-2\pi
ix^{\prime}\cdot\xi^{\prime}}\frac{\partial^{j_{d}}g}{\partial x_{d}^{j_{d}}%
}(x^{\prime},0)dx^{\prime}\\
&  \quad-\sum_{j_{d}=0}^{w}(2\pi i\xi_{d})^{-j_{d}-1}\int_{S_{d-1}}e^{-2\pi
ix^{\prime}\cdot\xi^{\prime}}e^{-2\pi i(1-x^{\prime}\cdot\mathbf{1}_{d-1}%
)\xi_{d}}\frac{\partial^{j_{d}}g}{\partial x_{d}^{j_{d}}}(x^{\prime
},1-x^{\prime}\cdot\mathbf{1}_{d-1})dx^{\prime}\\
&  \quad+(2\pi i\xi_{d})^{-w-1}\int_{S_{d}}\frac{\partial^{w+1}g}{\partial
x_{d}^{w+1}}(x)e^{-2\pi ix\cdot\xi}dx\\
&  =I+II+III.
\end{align*}
The term $III$ is part of the remainder. In view of Lemma \ref{Lemma Q(theta)}
and since $n\in\mathcal{Q}(\theta)\cap\mathbb{Z}^{d}$ implies $n_{d}\neq0$ it
suffices to show that%
\[
\sum_{n\in\mathbb{Z}^{d},n_{d}\neq0}\left[  (2\pi i\tau n_{d})^{-w-1}%
\int_{S_{d}}\frac{\partial^{w+1}g}{\partial x_{d}^{w+1}}(x)e^{-2\pi
ix\cdot\tau n}dx\right]  e^{2\pi inx}%
\]
is the Fourier series of a bounded function. Observe first that the integrals
in the above sum are the Fourier coefficients of a bounded function on the
torus,%
\begin{align*}
 \int_{S_{d}}\frac{\partial^{w+1}g}{\partial x_{d}^{w+1}}(x)&e^{-2\pi
ix\cdot\tau n}dx =\tau^{-d}\int_{\mathbb{R}^{d}}\chi_{S_{d}}(\tau^{-1}y)\frac{\partial
^{w+1}g}{\partial x_{d}^{w+1}}(\tau^{-1}y)e^{-2\pi iy\cdot n}dy\\
&  =\tau^{-d}\int_{\mathbb{T}^{d}}\left[  \sum_{k\in\mathbb{Z}^{d}}\chi
_{S_{d}}\left(  \tau^{-1}(y+k)\right)  \frac{\partial^{w+1}g}{\partial
x_{d}^{w+1}}\left(  \tau^{-1}(y+k)\right)  \right]  e^{-2\pi iy\cdot n}dy.
\end{align*}
The inner sum is finite and consists of at most $c(1+\tau)^{d}$ terms. By $(ii)$
in Lemma \ref{Lemma ovvio} and Corollary \ref{Lemma Bernoulli}, multiplying by
$(2\pi i\tau n_{d})^{-w-1}$, we obtain again the Fourier coefficients of a
periodic function bounded by%
\begin{align*}
&  (2\pi\tau)^{-w-1}\tau^{-d}\frac{\pi^{2}}{3}\sup_{y\in\mathbb{R}^{d}%
}\left\vert \sum_{k\in\mathbb{Z}^{d}}\chi_{S_{d}}\left(  \tau^{-1}%
(y+k)\right)  \frac{\partial^{w+1}g}{\partial x_{d}^{w+1}}\left(  \tau
^{-1}(y+k\right)  )\right\vert \\
&  \leqslant c\tau^{-d}\left(  1+\tau\right)  ^{d}(2\pi\tau)^{-w-1}\sup_{x\in
S_{d}}\left\vert \frac{\partial^{w+1}g}{\partial x_{d}^{w+1}}(x)\right\vert \\
&  \leqslant c(1+\tau_{0}^{-1})^{d}(2^{d-1}\Omega\tau^{-1})^{w+1}\sup_{x\in
S_{d}}\left\vert \frac{\partial^{w+1}g}{\partial x_{d}^{w+1}}(x)\right\vert
\end{align*}
with $c$ independent of $w$ and $g$ and $\tau\geqslant\tau_{0}>0$.

Let us consider the term $I$. Let $\theta_{1}=\{b^{\prime}:(b^{\prime}%
,0)\in\theta\}\in\Theta_{d-1}$ (see Lemma \ref{Lemma 22} for details). We claim that%
\[
\xi=\left(  \xi^{\prime},\xi_{d}\right)  \in\mathcal{Q}(\theta)\implies
\xi^{\prime}\in\mathcal{Q}(\theta_{1}).
\]
Indeed, let $\xi\in\mathcal{Q}(\theta)$ and let $b^{\prime}\in\theta_{1}%
\cap\Big(  \bigcup\limits_{\mathcal{B\in F}^{\left(  d-1\right)  }%
}\mathcal{B}\Big)  $. Then $(b^{\prime},0)\in\theta$ and therefore%
\[
0=\xi\cdot(b^{\prime},0)=\xi^{\prime}\cdot b^{\prime}.
\]
Now, let $b^{\prime}\in\bigcup\limits_{\mathcal{B\in F}^{\left(  d-1\right)
}}\mathcal{B}$, $b^{\prime}\notin\theta_{1}$, then $(b^{\prime},0)\notin
\theta$. Since $(b^{\prime},0)\in\bigcup\limits_{\mathcal{B\in F}^{\left(
d\right)  }}\mathcal{B}$ we have%
\[
0\not =\xi\cdot(b^{\prime},0)=\xi^{\prime}\cdot b^{\prime}.
\]
Applying the $d-1$ dimensional formula corresponding to $\theta_{1}$ to the
function $\frac{\partial^{j_{d}}g}{\partial x_{d}^{j_{d}}}(x^{\prime},0)$ we
obtain%
\begin{align*}
I=  &  \sum_{j_{d}=0}^{w}(2\pi i\xi_{d})^{-j_{d}-1}\int_{S_{d-1}}e^{-2\pi
ix^{\prime}\cdot\xi^{\prime}}\frac{\partial^{j_{d}}g}{\partial x_{d}^{j_{d}}%
}(x^{\prime},0)dx^{\prime}\\
=  &  \sum_{j_{d}=0}^{w}(2\pi i\xi_{d})^{-j_{d}-1}\sum_{V^{\prime}%
\in\{1,2\}^{d-1}}\sum_{|J^{\prime}|\leqslant w-j_{d}}\frac{\left\langle
\alpha(\theta_{1},V^{\prime},J^{\prime}),\left(  \partial/\partial
x_{d}\right)  ^{j_{d}}g(\cdot,0)\right\rangle e^{-2\pi i\lambda_{V^{\prime}%
}\cdot\xi^{\prime}}}{\prod_{b_{k}^{\prime}\in\mathcal{B}_{V^{\prime}}%
\setminus\theta_{1}}(2\pi ib_{k}^{\prime}\cdot\xi^{\prime})^{j_{k}^{^{\prime}%
}+1}}\\
&  +\sum_{j_{d}=0}^{w}(2\pi i\xi_{d})^{-j_{d}-1}\mathcal{R}_{\theta
_{1},w-j_{d}}\left(  \frac{\partial^{j_{d}}g}{\partial x_{d}^{j_{d}}}%
(\cdot,0),\xi^{\prime}\right) \\
=  &  \sum_{V=(V^{\prime},1)\in\{1,2\}^{d}}\sum_{|J|\leqslant w}%
\frac{\left\langle \alpha(\theta_{1},V^{\prime},J^{\prime}),\left(
\partial/\partial x_{d}\right)  ^{j_{d}}g(\cdot,0)\right\rangle e^{-2\pi
i\lambda_{V}\cdot\xi}}{\prod_{b_{k}\in\mathcal{B}_{V}\setminus\theta}(2\pi
ib_{k}\cdot\xi)^{j_{k}+1}}\\
&  +\sum_{j_{d}=0}^{w}(2\pi i\xi_{d})^{-j_{d}-1}\mathcal{R}_{\theta
_{1},w-j_{d}}\left(  \frac{\partial^{j_{d}}g}{\partial x_{d}^{j_{d}}}%
(\cdot,0),\xi^{\prime}\right)  .
\end{align*}
The double sum is part of the main term in the asymptotic expansion and the
last sum is part of the remainder. By induction, since $J=(J^{\prime},j_{d})$,%
\begin{align*}
\left\vert \left\langle \alpha(\theta_{1},V^{\prime},J^{\prime}),\left(
\partial/\partial x_{d}\right)  ^{j_{d}}g(\cdot,0)\right\rangle \right\vert
&  \leqslant c2^{(d-2)|J^{\prime}|}\sup_{|\alpha|\leqslant|J^{\prime}|}%
\sup_{x\in S_{d}}\left\vert \left(  \frac{\partial}{\partial x^{\prime}%
}\right)  ^{\alpha}\left(  \frac{\partial}{\partial x_{d}}\right)  ^{j_{d}%
}g(x)\right\vert \\
&  \leqslant c2^{(d-2)|J|}\sup_{|\alpha|\leqslant|J|}\sup_{x\in S_{d}%
}\left\vert \frac{\partial^{\alpha}g}{\partial x^{\alpha}}(x)\right\vert .
\end{align*}
We now deal with the remainder in $I$ as follows. By the induction assumption%
\[
\sum_{n^{\prime}\in\mathcal{Q}(\theta^{\prime})\cap\mathbb{Z}^{d-1}%
}\mathcal{R}_{\theta_{1},w-j_{d}}\left(  \frac{\partial^{j_{d}}g}{\partial
x_{d}^{j_{d}}}(\cdot,0),\tau n^{\prime}\right)  e^{2\pi in^{\prime}\cdot
x^{\prime}}%
\]
is the Fourier expansion of a function $F(x^{\prime})$ on $\mathbb{T}^{d-1}$
bounded by%
\begin{align*}
&  U(d-1)(2^{d-2}\Omega\tau^{-1})^{w+1-j_{d}}\sup_{w-j_{d}-(d-1)+2\leqslant
|\alpha|\leqslant w-j_{d}+1}\sup_{x^{\prime}\in S_{d-1}}\left\vert \left(
\frac{\partial}{\partial x^{\prime}}\right)  ^{\alpha}\frac{\partial^{j_{d}}%
g}{\partial x_{d}^{j_{d}}}(x^{\prime},0)\right\vert \\
&  \leqslant U(d-1)(2^{d-2}\Omega\tau^{-1})^{w+1-j_{d}}\sup_{w-d+3\leqslant
|\alpha|\leqslant w+1}\sup_{x\in S_{d}}\left\vert \frac{\partial^{\alpha}%
g}{\partial x^{\alpha}}(x)\right\vert .
\end{align*}
Hence, $-\tau^{-j_{d}-1}B_{j_{d}+1}\left(  x_{d}\right)  F\left(  x^{\prime
}\right)  $ is a function on $\mathbb{T}^{d}$ bounded by%
\begin{align*}
&  \tau^{-j_{d}-1}(2\pi)^{-j_{d}-1}\frac{\pi^{2}}{3}U(d-1)(2^{d-2}\Omega
\tau^{-1})^{w+1-j_{d}}\sup_{w-d+3\leqslant|\alpha|\leqslant w+1}\sup_{x\in
S_{d}}\left\vert \frac{\partial^{\alpha}g}{\partial x^{\alpha}}(x)\right\vert
\\
&  =(2^{d-2}\Omega2\pi)^{-j_{d}-1}\frac{\pi^{2}}{3}U(d-1)(2^{d-2}\Omega
\tau^{-1})^{w+2}\sup_{w-d+3\leqslant|\alpha|\leqslant w+1}\sup_{x\in S_{d}%
}\left\vert \frac{\partial^{\alpha}g}{\partial x^{\alpha}}(x)\right\vert ,
\end{align*}
and with Fourier expansion%
\begin{align*}
&  \sum_{n_{d}\neq0,n^{\prime}\in\mathcal{Q}(\theta_{1})\cap\mathbb{Z}^{d-1}%
}\left[  (2\pi i\tau n_{d})^{-j_{d}-1}\mathcal{R}_{\theta_{1},w-j_{d}}\left(
\frac{\partial^{j_{d}}g}{\partial x_{d}^{j_{d}}}(\cdot,0),\tau n^{\prime
}\right)  \right]  e^{2\pi in\cdot x}\\
&  =\sum_{n\in\mathbb{Z}^{d}}\chi_{\left\{  n_{d}\neq0,n^{\prime}%
\in\mathcal{Q}(\theta_{1})\cap\mathbb{Z}^{d-1}\right\}  }(n)\left[  (2\pi
i\tau n_{d})^{-j_{d}-1}\mathcal{R}_{\theta_{1},w-j_{d}}\left(  \frac
{\partial^{j_{d}}g}{\partial x_{d}^{j_{d}}}(\cdot,0),\tau n^{\prime}\right)
\right]  e^{2\pi in\cdot x}.
\end{align*}
Since $\mathcal{Q}(\theta)\cap\mathbb{Z}^{d}\subset\left(  \mathcal{Q}%
(\theta_{1})\cap\mathbb{Z}^{d-1}\right)  \times\left(  \mathbb{Z\setminus
}\left\{  0\right\}  \right)  $, by Lemma \ref{Lemma Q(theta)},
\begin{align*}
&  \sum_{n\in\mathcal{Q}(\theta)\cap\mathbb{Z}^{d}}\chi_{\left\{  n_{d}%
\neq0,n^{\prime}\in\mathcal{Q}(\theta_{1})\cap\mathbb{Z}^{d-1}\right\}
}(n)\left[  (2\pi i\tau n_{d})^{-j-1}\mathcal{R}_{\theta_{1},w-j}\left(
\frac{\partial^{j}g}{\partial x_{d}^{j}}(\cdot,0),\tau n^{\prime}\right)
\right]  e^{2\pi in\cdot x}\\
&  =\sum_{n\in\mathcal{Q}\left(  \theta\right)  \cap\mathbb{Z}^{d}}\left[
\left(  2\pi i\tau n_{d}\right)  ^{-j-1}\mathcal{R}_{\theta_{1},w-j}\left(
\frac{\partial^{j}g}{\partial x_{d}^{j}}\left(  \cdot,0\right)  ,\tau
n^{\prime}\right)  \right]  e^{2\pi in\cdot x}%
\end{align*}
is the Fourier series of a function on $\mathbb{T}^{d}$ bounded by%
\[
c\left(  \theta\right)  (2^{d-2}\Omega2\pi)^{-j_{d}-1}\frac{\pi^{2}}%
{3}U(d-1)(2^{d-2}\Omega\tau^{-1})^{w+2}\sup_{w-d+3\leqslant|\alpha|\leqslant
w+1}\sup_{x\in S_{d}}\left\vert \frac{\partial^{\alpha}g}{\partial x^{\alpha}%
}(x)\right\vert .
\]
Adding up on $j_{d}$ we obtain that
\[
\left\{  \chi_{\mathcal{Q}(\theta)}(n)\sum_{j_{d}=0}^{w}(2\pi i\tau
n_{d})^{-j_{d}-1}\mathcal{R}_{\theta_{1},w-j_{d}}\left(  \frac{\partial
^{j_{d}}g}{\partial x_{d}^{j_{d}}}(\cdot,0),\tau n^{\prime}\right)  \right\}
\]
are the Fourier coefficients of a function on the $d$-dimensional torus
bounded by%
\begin{align*}
&  \sum_{j_{d}=0}^{w}c\left(  \theta\right)  (2^{d-2}\Omega2\pi)^{-j_{d}%
-1}\frac{\pi^{2}}{3}U(d-1)(2^{d-2}\Omega\tau^{-1})^{w+2}\sup_{w-d+3\leqslant
|\alpha|\leqslant w+1}\sup_{x\in S_{d}}\left\vert \frac{\partial^{\alpha}%
g}{\partial x^{\alpha}}(x)\right\vert \\
&  =c\left(  \theta\right)  \frac{\pi^{2}}{3}U(d-1)(2^{d-2}\Omega\tau
^{-1})^{w+2}\sum_{j_{d}=0}^{w}(2^{d-2}\Omega2\pi)^{-j_{d}-1}\sup
_{w-d+3\leqslant|\alpha|\leqslant w+1}\sup_{x\in S_{d}}\left\vert
\frac{\partial^{\alpha}g}{\partial x^{\alpha}}(x)\right\vert \\
&  \leqslant c\left(  \theta\right)  \frac{\pi^{2}}{3}\frac{U(d-1)}%
{2^{d-1}\Omega\pi-1}(2^{d-2}\Omega\tau^{-1})^{w+2}\sup_{w-d+3\leqslant
|\alpha|\leqslant w+1}\sup_{x\in S_{d}}\left\vert \frac{\partial^{\alpha}%
g}{\partial x^{\alpha}}(x)\right\vert \\
&  \leqslant\left(  c\left(  \theta\right)  \frac{\pi^{2}}{3}\frac
{U(d-1)}{2^{d-1}\Omega\pi-1}2^{d-2}\Omega\tau_{0}^{-1}\right)  (2^{d-1}%
\Omega\tau^{-1})^{w+1}\sup_{w-d+3\leqslant|\alpha|\leqslant w+1}\sup_{x\in
S_{d}}\left\vert \frac{\partial^{\alpha}g}{\partial x^{\alpha}}(x)\right\vert
.
\end{align*}
Here we used the assumption $\Omega>(2\pi)^{-1}$.

The term $II$ is similar to $I$, but to estimate the remainder we need $(iii)$
in Lemma \ref{Lemma ovvio}. Let $\theta_{2}=\left\{  b^{\prime}:(b^{\prime
},-b^{\prime}\cdot\mathbf{1}_{d-1})\in\theta\right\}  \in\Theta_{d-1}$ (see Lemma \ref{Lemma 22} for details). We
claim that%
\[
\xi=\left(  \xi^{\prime},\xi_{d}\right)  \in\mathcal{Q}(\theta)\implies
\xi^{\prime}-\xi_{d}\mathbf{1}_{d-1}\in\mathcal{Q}(\theta_{2}).
\]
Indeed, let $\xi\in\mathcal{Q}(\theta)$ and let $b^{\prime}\in\theta_{2}%
\cap\Big(  \bigcup\limits_{\mathcal{B}\in\mathcal{F}^{\left(  d-1\right)  }%
}\mathcal{B}\Big)  $. Then $(b^{\prime},-b^{\prime}\cdot\mathbf{1}_{d-1}%
)\in\theta$ and therefore%
\[
0=\xi\cdot(b^{\prime},-b^{\prime}\cdot\mathbf{1}_{d-1})=\xi^{\prime}\cdot
b^{\prime}-\xi_{d}\mathbf{1}_{d-1}\cdot b^{\prime}=(\xi^{\prime}-\xi
_{d}\mathbf{1}_{d-1})\cdot b^{\prime}.
\]
Similarly if $b^{\prime}\in\Big(  \bigcup\limits_{\mathcal{B}\in
\mathcal{F}^{\left(  d-1\right)  }}\mathcal{B}\Big)  \setminus\theta_{2}$,
then $\left(  b^{\prime},-b^{\prime}\cdot\mathbf{1}_{d-1}\right)  \notin
\theta$. Since $\left(  b^{\prime},-b^{\prime}\cdot\mathbf{1}_{d-1}\right)
\in\bigcup\limits_{\mathcal{B}\in\mathcal{F}^{\left(  d\right)  }}\mathcal{B}%
$, we have%
\[
0\neq\xi\cdot\left(  b^{\prime},-b^{\prime}\cdot\mathbf{1}_{d-1}\right)
=(\xi^{\prime}-\xi_{d}\mathbf{1}_{d-1})\cdot b^{\prime}.
\]
By applying the $\left(  d-1\right)  $-dimensional formula corresponding to
$\theta_{2}$ to the function $\frac{\partial^{j_{d}}g}{\partial x_{d}^{j_{d}}%
}(x^{\prime},1-x^{\prime}\cdot\mathbf{1}_{d-1})$ we get
\begin{align*}
&  II=-\sum_{j_{d}=0}^{w}(2\pi i\xi_{d})^{-j_{d}-1}\int_{S_{d-1}}e^{-2\pi
ix^{\prime}\cdot\xi^{\prime}}e^{-2\pi i(1-x^{\prime}\cdot\mathbf{1}_{d-1}%
)\xi_{d}}\frac{\partial^{j_{d}}g}{\partial x_{d}^{j_{d}}}(x^{\prime
},1-x^{\prime}\cdot\mathbf{1}_{d-1})dx^{\prime}\\
&  =-\sum_{j_{d}=0}^{w}(2\pi i\xi_{d})^{-j_{d}-1}e^{-2\pi i\xi_{d}}%
\int_{S_{d-1}}e^{-2\pi ix^{\prime}\cdot(\xi^{\prime}-\xi_{d}\mathbf{1}_{d-1}%
)}\frac{\partial^{j_{d}}g}{\partial x_{d}^{j_{d}}}(x^{\prime},1-x^{\prime
}\cdot\mathbf{1}_{d-1})dx^{\prime}\\
&  =-\sum_{j_{d}=0}^{w}(2\pi i\xi_{d})^{-j_{d}-1}e^{-2\pi i\xi_{d}}\\
&  \quad\times\!\!\sum_{V^{\prime}\in\{1,2\}^{d-1}}\!\sum_{|J^{\prime}|\leqslant
w-j_{d}}\!\!\!\!\frac{\Big\langle \alpha(\theta_{2},V^{\prime},J^{\prime}),\left(
\left(  \partial/\partial x_{d}\right)  ^{j_{d}}g\right)  (x^{\prime
},1-x^{\prime}\cdot\mathbf{1}_{d-1})\Big\rangle e^{-2\pi i\lambda
_{V^{\prime}}\cdot(\xi^{\prime}-\xi_{d}\mathbf{1}_{d-1})}}{\prod
_{b_{k}^{\prime}\in\mathcal{B}_{V^{\prime}}\setminus\theta_{2}}\left(  2\pi
ib_{k}^{\prime}\cdot(\xi^{\prime}-\xi_{d}\mathbf{1}_{d-1})\right)  ^{j_{k}+1}%
}\\
&  \quad-\sum_{j_{d}=0}^{w}(2\pi i\xi_{d})^{-j_{d}-1}e^{-2\pi i\xi_{d}%
}\mathcal{R}_{\theta_{2},w-j_{d}}\left(  \frac{\partial^{j_{d}}g}{\partial
x_{d}^{j_{d}}}(x^{\prime},1-x^{\prime}\cdot\mathbf{1}_{d-1}),\xi^{\prime}%
-\xi_{d}\mathbf{1}_{d-1}\right) \\
&  =\sum_{V=(V^{\prime},2)\in\{1,2\}^{d}}\sum_{|J|\leqslant w}\frac
{-\left\langle \alpha(\theta_{2},V^{\prime},J^{\prime}),\left(  \left(
\partial/\partial x_{d}\right)  ^{j_{d}}g\right)  (x^{\prime},1-x^{\prime
}\cdot\mathbf{1}_{d-1})\right\rangle e^{-2\pi i\lambda_{V}\cdot\xi}}%
{\prod_{b_{k}\in\mathcal{B}_{V}\setminus\theta}(2\pi ib_{k}\cdot\xi)^{j_{k}%
+1}}\\
&  \quad-\sum_{j_{d}=0}^{w}(2\pi i\xi_{d})^{-j_{d}-1}e^{-2\pi i\xi_{d}%
}\mathcal{R}_{\theta_{2},w-j_{d}}\left(  \frac{\partial^{j_{d}}g}{\partial
x_{d}^{j_{d}}}(x^{\prime},1-x^{\prime}\cdot\mathbf{1}_{d-1}),\xi^{\prime}%
-\xi_{d}\mathbf{1}_{d-1}\right)  .
\end{align*}
By induction%
\begin{align*}
\Big\vert\Big\langle\alpha(\theta_{2},V^{\prime},J^{\prime}),  &  \left(
\left(  \partial/\partial x_{d}\right)  ^{j_{d}}g\right)  (x^{\prime
},1-x^{\prime}\cdot\mathbf{1}_{d-1})\Big\rangle\Big\vert\\
&  \leqslant c2^{(d-2)|J^{\prime}|}\sup_{|\alpha|\leqslant|J^{\prime}|}%
\sup_{x^{\prime}\in S_{d-1}}\left\vert \left(  \frac{\partial}{\partial
x^{\prime}}\right)  ^{\alpha}\left(  \frac{\partial^{j_{d}}g}{\partial
x_{d}^{j_{d}}}(x^{\prime},1-x^{\prime}\cdot\mathbf{1}_{d-1})\right)
\right\vert .
\end{align*}
Observe that for every $1\leqslant k\leqslant d-1$%
\[
\frac{\partial}{\partial x_{k}}\left(  \frac{\partial^{j_{d}}g}{\partial
x_{d}^{j_{d}}}(x^{\prime},1-x^{\prime}\cdot\mathbf{1}_{d-1})\right)
=\frac{\partial^{j_{d}+1}g}{\partial x_{k}\partial x_{d}^{j_{d}}}(x^{\prime
},1-x^{\prime}\cdot\mathbf{1}_{d-1})-\frac{\partial^{j_{d}+1}g}{\partial
x_{d}^{j_{d}+1}}(x^{\prime},1-x^{\prime}\cdot\mathbf{1}_{d-1}).
\]
Hence%
\[
\left\vert \left(  \frac{\partial}{\partial x^{\prime}}\right)  ^{\alpha
}\left(  \frac{\partial^{j_{d}}g}{\partial x_{d}^{j_{d}}}(x^{\prime
},1-x^{\prime}\cdot\mathbf{1}_{d-1})\right)  \right\vert \leqslant2^{|\alpha
|}\sup_{|\beta|=|\alpha|+j_{d}}\sup_{x\in S_{d}}\left\vert \frac
{\partial^{\beta}g}{\partial x^{\beta}}(x)\right\vert ,
\]
so that, with the notation $J=(J^{\prime},j_{d})$,%
\begin{align*}
&  \left\vert \left\langle \alpha(\theta_{2},V^{\prime},J^{\prime}),\left(
\left(  \partial/\partial x_{d}\right)  ^{j_{d}}g\right)  (x^{\prime
},1-x^{\prime}\cdot\mathbf{1}_{d-1})\right\rangle \right\vert \\
&  \leqslant c2^{(d-2)|J^{\prime}|}2^{|J^{\prime}|}\sup_{|\beta|\leqslant
|J^{\prime}|+j_{d}}\sup_{x\in S_{d}}\left\vert \frac{\partial^{\beta}%
g}{\partial x^{\beta}}(x)\right\vert \leqslant c2^{(d-1)|J|}\sup
_{|\beta|\leqslant|J|}\sup_{x\in S_{d}}\left\vert \frac{\partial^{\beta}%
g}{\partial x^{\beta}}(x)\right\vert .
\end{align*}
Let us consider the remainder and set $T\xi=(\xi^{\prime}-\xi_{d}%
\mathbf{1}_{d-1}\mathbf{,}\xi_{d})$. Then $T\in SL(d,\mathbb{Z})$ and, by $(iii)$   
in Lemma \ref{Lemma ovvio},
\begin{equation}
-\sum_{j_{d}=0}^{w}\chi_{\mathcal{Q}(\theta)}(n)(2\pi i\tau n_{d})^{-j_{d}%
-1}e^{-2\pi i\tau n_{d}}\mathcal{R}_{\theta_{2},w-j_{d}}\bigg(  \frac
{\partial^{j_{d}}g}{\partial x_{d}^{j_{d}}}(x^{\prime},1-x^{\prime}%
\cdot\mathbf{1}_{d-1}),\tau n^{\prime}-\tau n_{d}\mathbf{1}_{d-1}\bigg)
\label{label1}%
\end{equation}
are the Fourier coefficients of a bounded function if%
\begin{equation}
-\sum_{j_{d}=0}^{w}\chi_{T\left(  \mathcal{Q}(\theta)\right)  }(n)(2\pi i\tau
n_{d})^{-j_{d}-1}e^{-2\pi i\tau n_{d}}\mathcal{R}_{\theta_{2},w-j_{d}}\bigg(
\frac{\partial^{j_{d}}g}{\partial x_{d}^{j_{d}}}(x^{\prime},1-x^{\prime}%
\cdot\mathbf{1}_{d-1}),\tau n^{\prime}\bigg)  \label{label2}%
\end{equation}
are the Fourier coefficients of a bounded function, and the bound is the same.
By induction%
\[
\left\{  \chi_{\mathcal{Q}(\theta_{2})}(n^{\prime})\mathcal{R}_{\theta
_{2},w-j_{d}}\left(  \frac{\partial^{j_{d}}g}{\partial x_{d}^{j_{d}}%
}(x^{\prime},1-x^{\prime}\cdot\mathbf{1}_{d-1}),\tau n^{\prime}\right)
\right\}
\]
are the Fourier coefficients of a function on the $d-1$ dimensional torus
bounded by%
\begin{align*}
&  U(d-1)(2^{d-2}\Omega\tau^{-1})^{w+1-j_{d}}\sup_{\substack{w-j_{d}-(d-1)+2\leqslant
|\alpha|\\\leqslant w-j_{d}+1}}\sup_{x^{\prime}\in S_{d-1}}\left\vert \left(
\frac{\partial}{\partial x^{\prime}}\right)  ^{\alpha}\left(  \frac
{\partial^{j_{d}}g}{\partial x_{d}^{j_{d}}}(x^{\prime},1-x^{\prime}%
\cdot\mathbf{1}_{d-1})\right)  \right\vert \\
&  \leqslant U(d-1)(2^{d-2}\Omega\tau^{-1})^{w+1-j_{d}}\sup_{w-j_{d}%
-(d-1)+2\leqslant|\alpha|\leqslant w-j_{d}+1}2^{|\alpha|}\sup_{|\beta
|=|\alpha|+j_{d}}\sup_{x\in S_{d}}\left\vert \frac{\partial^{\beta}g}{\partial
x^{\beta}}(x)\right\vert \\
&  \leqslant U(d-1)(2^{d-1}\Omega\tau^{-1})^{w+1-j_{d}}\sup_{w-j_{d}%
-(d-1)+2\leqslant|\alpha|\leqslant w-j_{d}+1}\sup_{|\beta|=|\alpha|+j_{d}}%
\sup_{x\in S_{d}}\left\vert \frac{\partial^{\beta}g}{\partial x^{\beta}%
}(x)\right\vert \\
&  \leqslant U(d-1)(2^{d-1}\Omega\tau^{-1})^{w+1-j_{d}}\sup_{w-d+3\leqslant
|\beta|\leqslant w+1}\sup_{x\in S_{d}}\left\vert \frac{\partial^{\beta}%
g}{\partial x^{\beta}}(x)\right\vert .
\end{align*}
Hence, by $(i)$ in Lemma \ref{Lemma ovvio} and Corollary \ref{Lemma Bernoulli},%
\[
-\!\!\!\sum_{n_{d}\neq0,n^{\prime}\in\mathcal{Q}(\theta_{2})}\left[  \sum_{j_{d}%
=0}^{w}(2\pi i\tau n_{d})^{-j_{d}-1}e^{-2\pi i\tau n_{d}}\mathcal{R}%
_{\theta_{2},w-j_{d}}\left(  \frac{\partial^{j_{d}}g}{\partial x_{d}^{j_{d}}%
}(x^{\prime},1-x^{\prime}\cdot\mathbf{1}_{d-1}),\tau n^{\prime}\right)
\right]  e^{2\pi in\cdot x}%
\]
is the Fourier series of a function on $\mathbb{T}^{d}$ bounded by%
\begin{align*}
&  \sum_{j_{d}=0}^{w}\left(  \frac{\pi^{2}}{3}(2\pi\tau)^{-j_{d}%
-1}U(d-1)(2^{d-1}\Omega\tau^{-1})^{w+1-j_{d}}\sup_{w-d+3\leqslant
|\beta|\leqslant w+1}\sup_{x\in S_{d}}\left\vert \frac{\partial^{\beta}%
g}{\partial x^{\beta}}(x)\right\vert \right) \\
&  \leqslant\frac{\pi^{2}}{3}U(d-1)(2^{d-1}\Omega\tau^{-1})^{w+2}\sum
_{j_{d}=0}^{w}(2^{d-1}2\pi\Omega)^{-j_{d}-1}\sup_{w-d+3\leqslant
|\beta|\leqslant w+1}\sup_{x\in S_{d}}\left\vert \frac{\partial^{\beta}%
g}{\partial x^{\beta}}(x)\right\vert \\
&  \leqslant\frac{\pi^{2}}{3}U(d-1)(2^{d-1}\Omega\tau^{-1})^{w+2}\frac
{1}{2^{d-1}2\pi\Omega-1}\sup_{w-d+3\leqslant|\beta|\leqslant w+1}\sup_{x\in
S_{d}}\left\vert \frac{\partial^{\beta}g}{\partial x^{\beta}}(x)\right\vert .
\end{align*}
Since $T\mathcal{Q}(\theta)\cap\mathbb{Z}^{d}\subset\left(  \mathcal{Q}%
(\theta_{2})\cap\mathbb{Z}^{d-1}\right)  \times\left(  \mathbb{Z\setminus
}\left\{  0\right\}  \right)  $ (notice that $\xi\in\mathcal{Q}(\theta)$
implies $\xi^{\prime}-\xi_{d}\mathbf{1}_{d-1}$ $\in\mathcal{Q}(\theta_{2})$),
by Lemma \ref{Lemma Q(theta)}, the remainder (\ref{label2}) and therefore
(\ref{label1}) is bounded by%
\begin{align*}
&  c\frac{\pi^{2}}{3}\frac{U(d-1)}{2^{d-1}2\pi\Omega-1}(2^{d-1}\Omega\tau
^{-1})^{w+2}\sup_{w-d+3\leqslant|\beta|\leqslant w+1}\sup_{x\in S_{d}%
}\left\vert \frac{\partial^{\beta}g}{\partial x^{\beta}}(x)\right\vert \\
&  \leqslant c\left(  \frac{\pi^{2}}{3}\frac{U(d-1)}{2^{d-1}2\pi\Omega
-1}(2^{d-1}\Omega\tau_{0}^{-1})\right)  (2^{d-1}\Omega\tau^{-1})^{w+1}%
\sup_{w-d+3\leqslant|\beta|\leqslant w+1}\sup_{x\in S_{d}}\left\vert
\frac{\partial^{\beta}g}{\partial x^{\beta}}(x)\right\vert .
\end{align*}

This proves the formula when $e_{d}\notin\theta$. In particular,
\begin{align*}
&  \left\langle \alpha\left(  \theta,V,J\right)  ,g\right\rangle   =\left\{
\begin{array}
[c]{ll}%
\left\langle \alpha\left(  \theta_{1},V^{\prime},J^{\prime}\right)  ,\left(
\frac{\partial}{\partial x_{d}}\right)  ^{j_{d}}g\left(  x^{\prime},0\right)
\right\rangle  & V=\left(  V^{\prime},1\right)  ,\\
-\left\langle \alpha\left(  \theta_{2},V^{\prime},J^{\prime}\right)  ,\left(
\frac{\partial}{\partial x_{d}}\right)  ^{j_{d}}g\left(  x^{\prime
},1-x^{\prime}\cdot\boldsymbol{1}_{d-1}\right)  \right\rangle  & V=\left(
V^{\prime},2\right)  .
\end{array}
\right.
\end{align*}

\end{proof}
As mentioned, our formulas are not symmetric since they depend both on the
faces of $S_{d}$ that are orthogonal to the considered point $\xi$ and on the
way we iterate the integration in our computation. However one can obtain more
symmetric formulas by averaging on all different ways of computing the Fourier
transform $\widehat{g\chi_{S_{d}}}\left(  \xi\right)  $ as an iterated
integral.\medskip

In the following we obtain a precise expression for the functionals $\alpha(
\theta,V,J) $ that makes explicit the dependence of the coefficients
$\left\langle \alpha( \theta,V,J) ,g\right\rangle $ on the function $g$ and on
$\theta$, $V$ and $J$.

\begin{definition}
For every $h=2,\ldots,d$, let $U_{h}:\mathbb{R}^{h}\rightarrow\mathbb{R}%
^{h-1}$ be the operator that removes the last coordinate. For every $\theta
\in\Theta_{h}$ and for every $\mathcal{B}\in\mathcal{F}^{\left(  h\right)  }$,
let $P_{h,\mathcal{B}}\theta$ be the subspace of $\mathbb{R}^{h-1}$ defined as
follows:%
\[
P_{h,\mathcal{B}}\theta=%
\begin{cases}
U_{h}\theta & \text{if }e_{h}\in\theta,\\
U_{h}\left(  \theta\cap\{x\cdot e_{h}=0\}\right)  & \text{if }e_{h}%
\notin\theta\text{ and }\mathcal{B}\in\mathcal{F}_{1}^{(h)},\\
U_{h}\left(  \theta\cap\{x\cdot\mathbf{1}_{h}=0\}\right)  & \text{if }%
e_{h}\notin\theta\text{ and }\mathcal{B}\in\mathcal{F}_{2}^{(h)}.
\end{cases}
\]

\end{definition}

Observe that $P_{d,\mathcal{B}}\theta$ is just the subspace $\theta^{\prime}$,
$\theta_{1}$, or $\theta_{2}$ used in the proof of Lemma
\ref{Lemma trasf simplesso standard}.

\begin{lemma}
\label{Lemma 22}With the above notation we have%
\[
P_{h,\mathcal{B}}\theta=%
\begin{cases}
U_{h}\left(  \theta\cap\{x\cdot e_{h}=0\}\right)  & \text{if }\mathcal{B}%
\in\mathcal{F}_{1}^{(h)},\\
U_{h}\left(  \theta\cap\{x\cdot\mathbf{1}_{h}=0\}\right)  & \text{if
}\mathcal{B}\in\mathcal{F}_{2}^{(h)}.
\end{cases}
\]
Moreover, $P_{h,\mathcal{B}}\theta\in\Theta_{h-1}$.
\end{lemma}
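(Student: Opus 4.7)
For the first part of the lemma (when $e_h \in \theta$, the three expressions for $P_{h,\mathcal{B}}\theta$ coincide), I would use a simple decomposition argument. Given any $v=(v',v_h)\in\theta$, one has $v - v_h e_h = (v',0) \in \theta\cap\{x\cdot e_h=0\}$, and since $e_h\cdot\mathbf{1}_h=1$, the vector $v - (v\cdot\mathbf{1}_h)e_h$ lies in $\theta\cap\{x\cdot\mathbf{1}_h=0\}$. In both cases the correction is a multiple of $e_h$, which is killed by $U_h$, so $U_h(v)$ equals $U_h$ of either projection. Combined with the trivial reverse inclusions, this yields $U_h(\theta)=U_h(\theta\cap\{x\cdot e_h=0\})=U_h(\theta\cap\{x\cdot\mathbf{1}_h=0\})$, reducing to the first assertion of the lemma.

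For the second part, the key structural fact is that every vector in $\bigcup_{\mathcal{B}\in\mathcal{F}^{(h)}}\mathcal{B}$ has $\mathbf{1}_h$-sum in $\{0,1\}$; moreover the sum-$1$ vectors are exactly the standard basis $e_1,\ldots,e_h$, and the sum-$0$ vectors are exactly $\{e_i-e_j\colon 1\leq i<j\leq h\}$. This follows by induction on $h$ directly from Definition \ref{bases-defn}, since $(b,-b\cdot\mathbf{1}_{h-1})$ always has sum $0$, $(b,0)$ has the same sum as $b$, and $e_h$ has sum $1$.

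Given this, I would fix a finite generating set $S\subseteq\bigcup\mathcal{F}^{(h)}$ for $\theta$ and consider the linear functional $\pi$ defining $H$ ($\pi(v)=v_h$ for $\mathcal{B}\in\mathcal{F}_1^{(h)}$, $\pi(v)=v\cdot\mathbf{1}_h$ for $\mathcal{B}\in\mathcal{F}_2^{(h)}$). By the structural fact, $\pi$ takes values only in $\{-1,0,1\}$ on $S$. If $\pi$ vanishes on $\theta$, then $S\subseteq H$ and $U_h$ sends $S$ into $\bigcup\mathcal{F}^{(h-1)}$ directly. Otherwise, picking a pivot $g_0\in S$ with $\pi(g_0)\neq 0$, the intersection $\theta\cap H$ is spanned by $(S\cap H)$ together with the corrections $g-(\pi(g)/\pi(g_0))g_0$ for $g\in S$ with $\pi(g)\neq 0$ and $g\neq g_0$. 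The ratio is $\pm 1$, and a brief case analysis on which of $\mathcal{F}_1^{(h)}$ or $\mathcal{F}_2^{(h)}$ contain $g_0$ and $g$ shows that every correction is, up to sign, either of the form $(c,0)$, $(b,-b\cdot\mathbf{1}_{h-1})$, or $(e_i-e_j,0)$, all of which belong to $\bigcup\mathcal{F}^{(h)}\cap H$ thanks to the identification of sum-$0$ vectors with $\{e_i-e_j\}$. Applying $U_h$ then gives a generating set of $U_h(\theta\cap H)$ contained in $\pm\bigcup\mathcal{F}^{(h-1)}$, so $U_h(\theta\cap H)\in\Theta_{h-1}$. The main technical obstacle is precisely this case analysis, which is tedious but made manageable by the $\{-1,0,1\}$ restriction on $\pi$.
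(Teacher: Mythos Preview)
Your proposal is correct and follows essentially the same route as the paper: the first part is handled by correcting any $v\in\theta$ by a multiple of $e_h$, and the second by first identifying $\bigcup_{\mathcal{B}\in\mathcal{F}^{(h)}}\mathcal{B}=\{e_j,\ e_\ell-e_k:1\le j\le h,\ 1\le\ell<k\le h\}$ and then running a pivot/elimination argument to produce generators of $\theta\cap H$ that lie, up to sign, in this set. Your packaging of both cases via the functional $\pi$ is slightly more uniform than the paper's treatment (which works out $\mathcal{F}_1^{(h)}$ explicitly and leaves $\mathcal{F}_2^{(h)}$ to the reader), but the underlying computation is identical.
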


\begin{proof}
Let $\Pi_{h}:\mathbb{R}^{h}\rightarrow\mathbb{R}^{h}$ be the orthogonal
projection
\[
\Pi_{h}(x_{1},\ldots,x_{h})=(x_{1},\ldots,x_{h-1},0)
\]
so that $U_{h}=U_{h}\Pi_{h}$. It suffices to observe that if $e_{h}\in\theta,$
then%
\[
\Pi_{h}\theta=\Pi_{h}\left(  \theta\cap\{x\cdot e_{h}=0\}\right)  =\Pi
_{h}\left(  \theta\cap\{x\cdot\mathbf{1}_h=0\}\right)  .
\]
Indeed, if $x\in\Pi_{h}\theta$ then $x=\Pi_{h}y$ for some $y\in\theta$, so
that $x=\Pi_{h}(\Pi_{h}y)$. Since $\Pi_{h}y\in\theta\cap\{x\cdot e_{h}=0\}$ we
have $\Pi_{h}\theta\subseteq\Pi_{h}\left(  \theta\cap\{x\cdot e_{h}%
=0\}\right)  $. Similarly $x=\Pi_{h}(\Pi_{h}y-(y\cdot\boldsymbol{1}_h)e_{h})$
and $\Pi_{h}y-(y\cdot\boldsymbol{1}_h)e_{h}\in\theta\cap\{x\cdot\boldsymbol{1}_h%
=0\}$ so that $\Pi_{h}\theta\subseteq\Pi_{h}(\theta\cap\{x\cdot\mathbf{1}_h%
=0\})$. The reverse inclusions are trivial. 

Concerning the last point of the lemma, observe first that it follows easily
from the recurrence definition of the bases of $\mathcal{F}^{(h)}$ that%
\[
\bigcup_{\mathcal{B}\in\mathcal{F}^{(h)}}\mathcal{B=}\left\{  e_{j}%
,e_{\ell}-e_{k}:1\leq j\leq h, 1\leq \ell<
k\leq h\right\}  .
\]
Assume now that $\theta=\operatorname*{span}\left\{  b_{1},\ldots
,b_{N}\right\}  $ with $b_{j}\in\bigcup_{\mathcal{B}\in\mathcal{F}^{(h)}%
}\mathcal{B}$. If $e_{h}\in\theta$, then $P_{h,\mathcal{B}}\theta=U_{h}%
\theta=\operatorname*{span}\left\{  U_{h}b_{1},\ldots,U_{h}b_{N}\right\}  $,
and by construction each vector $U_{h}b_{j}$ belongs to $\bigcup
_{\mathcal{B}\in\mathcal{F}^{(h-1)}}\mathcal{B}$, so that $P_{h,\mathcal{B}%
}\theta\in\Theta_{h-1}$. Assume $\mathcal{B\in F}_{1}^{(h)}$ (the case
$\mathcal{B\in F}_{2}^{(h)}$ is treated similarly). If $e_{h}\notin\theta$ and
$b_{j}\cdot e_{h}=0$ for all $j=1,\ldots,N$, then again $P_{h,\mathcal{B}%
}\theta=U_{h}\theta=\operatorname*{span}\left\{  U_{h}b_{1},\ldots,U_{h}%
b_{N}\right\}  \in\Theta_{h-1}$. If instead $e_{h}\notin\theta$ but, say,
$b_{N}\cdot e_{h}\neq0$, then in particular $b_{N}\cdot e_{h}=-1$ and
\begin{align*}
P_{h,\mathcal{B}}\theta & =U_{h}\left(  \theta\cap\left\{  x\cdot
e_{h}=0\right\}  \right)  =U_{h}\left\{  \sum_{j=1}^{N}c_{j}b_{j}:\sum
_{j=1}^{N}c_{j}b_{j}\cdot e_{h}=0\right\}  \\
& =U_{h}\left\{  \sum_{j=1}^{N}c_{j}b_{j}:c_{N}=\sum_{j=1}^{N-1}c_{j}%
b_{j}\cdot e_{h}\right\}  =\left\{  \sum_{j=1}^{N-1}c_{j}U_{h}\left(
b_{j}+\left(  b_{j}\cdot e_{h}\right)  b_{N}\right)  \right\}  .
\end{align*}
Now if $b_{j}\cdot e_{h}=0$ then $U_{h}\left(  b_{j}+\left(  b_{j}\cdot
e_{h}\right)  b_{N}\right)  =U_{h}b_{j}\in\bigcup_{\mathcal{B}\in
\mathcal{F}^{(h-1)}}\mathcal{B}$. If on the contrary $b_{j}\cdot e_{h}\neq0$,
then $b_{j}\cdot e_{h}=-1$ and either $ b_{j}+\left(  b_{j}\cdot
e_{h}\right)  b_{N}  =b_{j}-b_{N}$ or its opposite belong to
$\bigcup_{\mathcal{B}\in\mathcal{F}^{(h)}}\mathcal{B}$. It follows that
$U_{h}\left(  b_{j}+\left(  b_{j}\cdot e_{h}\right)  b_{N}\right)  $ or
$-U_{h}\left(  b_{j}+\left(  b_{j}\cdot e_{h}\right)  b_{N}\right)  $ belong
to $\bigcup_{\mathcal{B}\in\mathcal{F}^{(h-1)}}\mathcal{B}$. Thus,
$P_{h,\mathcal{B}}\theta\in\Theta_{h-1}$.

\end{proof}

\begin{definition}
For every $h=2,\ldots,d$, and for every $\mathcal{B}=\{b_{1},\ldots,b_{h}%
\}\in\mathcal{F}^{\left(  h\right)  }$, define $P_{h}\mathcal{B}\in
\mathcal{F}^{\left(  h-1\right)  }$ as $P_{h}\mathcal{B}=\{U_{h}b_{1}%
,\ldots,U_{h}b_{h-1}\}$. Observe that by Definition \ref{bases-defn},
$P_{h}\mathcal{B}$ is the basis used to construct $\mathcal{B}$.
\end{definition}

\textbf{Notation}. In the next lemmas and definitions for a given basis
$\mathcal{B\in F}^{\left(  d\right)  }$, we shall call $\mathcal{B}%
_{d}=\mathcal{B}$, $\mathcal{B}_{d-1}=P_{d}\mathcal{B}$, $\mathcal{B}%
_{d-2}=P_{d-1}P_{d}\mathcal{B}$, \ldots, $\mathcal{B}_{2}=P_{3}P_{4}\ldots
P_{d}\mathcal{B}$, and $\mathcal{B}_{1}=P_{2}P_{3}\ldots P_{d}\mathcal{B}%
=\{1\}.$ Also, we denote by $b_{k}^{(h)}$ the $k$-th vector of the basis
$\mathcal{B}_{h}$. Similarly, $e_{k}^{(h)}$ denotes the $k$-th vector of the
canonical basis of $\mathbb{R}^{h}$.

\begin{lemma}
\label{Lemma Step}For all $h\geqslant2$ and for all $k=1,\ldots,h-1$,
$b_{k}^{(h-1)}=U_{h}b_{k}^{(h)}\in P_{h,\mathcal{B}}\theta$ if and only if
$b_{k}^{(h)}\in\theta.$
\end{lemma}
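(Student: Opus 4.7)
The plan is to prove the equivalence by case analysis, matching the three cases in the definition of $P_{h,\mathcal{B}}\theta$. The key structural observation is that by Definition \ref{bases-defn}, for $k = 1, \ldots, h-1$ one has $b_k^{(h)} = (b_k^{(h-1)}, c_k)$ where
\[
c_k = \begin{cases} 0 & \text{if } \mathcal{B} \in \mathcal{F}_1^{(h)}, \\ -b_k^{(h-1)} \cdot \mathbf{1}_{h-1} & \text{if } \mathcal{B} \in \mathcal{F}_2^{(h)}, \end{cases}
\]
and in particular $U_h b_k^{(h)} = b_k^{(h-1)}$ always. A crucial consequence is that in the $\mathcal{F}_1^{(h)}$ case $b_k^{(h)} \cdot e_h = 0$, and in the $\mathcal{F}_2^{(h)}$ case $b_k^{(h)} \cdot \mathbf{1}_h = 0$.

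The forward direction $b_k^{(h)} \in \theta \Rightarrow b_k^{(h-1)} \in P_{h,\mathcal{B}}\theta$ is essentially immediate in every case. When $e_h \in \theta$, we simply apply $U_h$. When $e_h \notin \theta$ and $\mathcal{B} \in \mathcal{F}_\ell^{(h)}$, the remark above shows that $b_k^{(h)}$ already lies in the slicing hyperplane used to define $P_{h,\mathcal{B}}\theta$ (namely $\{x\cdot e_h = 0\}$ or $\{x \cdot \mathbf{1}_h = 0\}$), so applying $U_h$ gives $b_k^{(h-1)} \in P_{h,\mathcal{B}}\theta$.

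For the reverse direction, start from $b_k^{(h-1)} \in P_{h,\mathcal{B}}\theta$ and produce a $y \in \theta$ realising it. If $e_h \in \theta$, any preimage $y \in \theta$ can be corrected to $(b_k^{(h-1)}, 0) \in \theta$ by subtracting the appropriate multiple of $e_h$; one then adds $c_k e_h \in \theta$ to land exactly on $b_k^{(h)}$. If $e_h \notin \theta$ and $\mathcal{B} \in \mathcal{F}_1^{(h)}$, the defining condition for $P_{h,\mathcal{B}}\theta$ forces $y_h = 0$, hence $y = (b_k^{(h-1)}, 0) = b_k^{(h)}$. If $e_h \notin \theta$ and $\mathcal{B} \in \mathcal{F}_2^{(h)}$, the condition $y \cdot \mathbf{1}_h = 0$ forces $y_h = -b_k^{(h-1)} \cdot \mathbf{1}_{h-1} = c_k$, hence again $y = b_k^{(h)}$.

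The only subtle point, and the step I expect to require the most care, is the converse when $\mathcal{B} \in \mathcal{F}_2^{(h)}$ and $e_h \notin \theta$: here the existence of a preimage in $\theta$ with prescribed value of the linear form $x \cdot \mathbf{1}_h$ must match exactly the twisted last coordinate $c_k = -b_k^{(h-1)} \cdot \mathbf{1}_{h-1}$ built into the definition of $b_k^{(h)}$. The identity $b_k^{(h)} \cdot \mathbf{1}_h = 0$ noted above is precisely what makes this work, and it is the reason the family $\mathcal{F}_2^{(h)}$ was introduced with that particular twist in Definition \ref{bases-defn}.
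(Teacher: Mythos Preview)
Your proof is correct and follows essentially the same approach as the paper: both exploit the structural fact that $b_k^{(h)}\cdot e_h=0$ when $\mathcal{B}\in\mathcal{F}_1^{(h)}$ and $b_k^{(h)}\cdot\mathbf{1}_h=0$ when $\mathcal{B}\in\mathcal{F}_2^{(h)}$, and then show that any preimage $y\in\theta$ in the relevant hyperplane must coincide with $b_k^{(h)}$. The only cosmetic difference is that the paper invokes Lemma~\ref{Lemma 22} to collapse your three-case analysis into two (handling $e_h\in\theta$ uniformly with the $e_h\notin\theta$ cases), whereas you treat the $e_h\in\theta$ case separately with the explicit correction by multiples of $e_h$; both arguments are equally valid.
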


\begin{proof}
If $\mathcal{B\in F}_{1}^{(h)}$ then $b_{k}^{(h)}\cdot e_{h}^{(h)}=0$. Thus if
$b_{k}^{(h)}\in\theta$ then, by Lemma \ref{Lemma 22}, $U_{h}b_{k}^{(h)}\in
P_{h,\mathcal{B}}\theta$. Conversely, if $U_{h}b_{k}^{(h)}\in P_{h,\mathcal{B}%
}\theta$ then there exists $y\in\theta\cap\{x\cdot e_{h}^{(h)}=0\}$ such that
$U_{h}b_{k}^{(h)}=U_{h}y$, so that $U_{h}(b_{k}^{(h)}-y)=0$. This implies that
the first $h-1$ coordinates of $b_{k}^{(h)}$ and $y$ coincide. The last
coordinate of $y$ is $0$ by construction, and since $\mathcal{B\in F}%
_{1}^{(h)}$ and $k<h$, the last coordinate of $b_{k}^{(h)}$ also is $0$. Thus
$b_{k}^{(h)}=y$ so that $b_{k}^{(h)}\in\theta$.

Similarly, if $\mathcal{B\in F}_{2}^{(h)}$ then $b_{k}^{(h)}\cdot\mathbf{1}_h%
=0$. Thus if $b_{k}^{(h)}\in\theta$ then $U_{h}b_{k}^{(h)}\in P_{h,\mathcal{B}%
}\theta$. Conversely, if $U_{h}b_{k}^{(h)}\in P_{h,\mathcal{B}}\theta$ then
there exists $y\in\theta\cap\{x\cdot\mathbf{1}_h=0\}$ such that $U_{h}%
b_{k}^{(h)}=U_{h}y$, so that $U_{h}(b_{k}^{(h)}-y)=0$. This implies that the
first $h-1$ coordinates of $b_{k}^{(h)}$ and $y$ coincide. The fact that
$b_{k}^{(h)}\cdot\mathbf{1}_h=0$ and $y\cdot\mathbf{1}_h=0$ implies that also the last coordinate of
$b_{k}^{(h)}$ and $y$ coincide. Thus $b_{k}^{(h)}=y$ so that $b_{k}^{(h)}%
\in\theta$.
\end{proof}

\begin{definition}
For any given $\theta\in\Theta_{d}$ and for every $\mathcal{B}\in
\mathcal{F}^{\left(  d\right)  }$ the multi-index $Z=Z_{\mathcal{B},\theta
}=(z_{1},\ldots,z_{d})\in\{0,1\}^{d}$ is defined recursively as follows:%
\begin{align*}
z_{d}  &  =0\text{ iff }e_{d}^{(d)}\in\theta\\
z_{d-1}  &  =0\text{ iff }e_{d-1}^{(d-1)}\in P_{d,\mathcal{B}_{d}}\theta\\
z_{d-2}  &  =0\text{ iff }e_{d-2}^{(d-2)}\in P_{d-1,\mathcal{B}_{d-1}%
}P_{d,\mathcal{B}_{d}}\theta\\
z_{d-3}  &  =0\text{ iff }e_{d-3}^{(d-3)}\in P_{d-2,\mathcal{B}_{d-2}%
}P_{d-1,\mathcal{B}_{d}}P_{d,\mathcal{B}_{d}}\theta\\
&  \ldots\\
z_{1}  &  =0\text{ iff }1\in P_{2,\mathcal{B}_{2}}\ldots P_{d-2,\mathcal{B}%
_{d-2}}P_{d-1,\mathcal{B}_{d-1}}P_{d,\mathcal{B}_{d}}\theta
\end{align*}

\end{definition}

\begin{definition}
For any given $\theta\in\Theta_{d}$ and any multi-index $V\in\left\{
1,2\right\}  ^{d}$ define the multi-index $I=I_{V,\theta}=(i_{1},\ldots
,i_{d})\in\{0,1\}^{d}$ by
\[
i_{k}=0\iff b_{k}^{(d)}\in\theta
\]
where $\mathcal{B}_{V}=\left\{  b_{1}^{(d)},\ldots,b_{d}^{(d)}\right\}
\in\mathcal{F}^{\left(  d\right)  }$ is the basis associated with $V$.
\end{definition}

\begin{lemma}
For any given $\theta\in\Theta_{d}$ and for every $\mathcal{B}_{V}%
\in\mathcal{F}^{\left(  d\right)  }$, $Z_{\mathcal{B}_{V},\theta}=I_{V,\theta
}$.
\end{lemma}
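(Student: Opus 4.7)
The plan is to show equivalence component by component: for each $k\in\{1,\dots,d\}$, prove that the $k$-th coordinate $z_{k}$ of $Z_{\mathcal{B}_{V},\theta}$ equals the $k$-th coordinate $i_{k}$ of $I_{V,\theta}$, by using Lemma~\ref{Lemma Step} iteratively $d-k$ times.

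First I would dispose of the top component $k=d$ by direct inspection. By Definition~\ref{bases-defn}, the last vector of every basis in $\mathcal{F}^{(d)}$ is $(0,\dots,0,1)$, so $b_{d}^{(d)}=e_{d}^{(d)}$. Therefore $z_{d}=0$ iff $e_{d}^{(d)}\in\theta$ iff $b_{d}^{(d)}\in\theta$ iff $i_{d}=0$. This handles $k=d$ without any projections.

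For $k<d$, the key observation is again that the last vector of the smaller basis $\mathcal{B}_{k}\in\mathcal{F}^{(k)}$ is $(0,\dots,0,1)$, so $b_{k}^{(k)}=e_{k}^{(k)}$. Set
\[
\theta^{(h)}=P_{h+1,\mathcal{B}_{h+1}}P_{h+2,\mathcal{B}_{h+2}}\cdots P_{d,\mathcal{B}_{d}}\theta,\qquad h=k,k+1,\dots,d-1,
\]
with the convention $\theta^{(d)}=\theta$. By Lemma~\ref{Lemma 22}, since $\theta\in\Theta_{d}$, each projection stays inside the appropriate $\Theta_{h}$, so $\theta^{(h)}\in\Theta_{h}$ for every $h$. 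The defining condition for $z_{k}$ reads $z_{k}=0\iff e_{k}^{(k)}\in\theta^{(k)}$, i.e.\ $b_{k}^{(k)}\in\theta^{(k)}$. Now I would apply Lemma~\ref{Lemma Step}, with $h=k+1$ and basis $\mathcal{B}_{k+1}$, to the subspace $\theta^{(k+1)}\in\Theta_{k+1}$: it yields $b_{k}^{(k)}\in P_{k+1,\mathcal{B}_{k+1}}\theta^{(k+1)}\iff b_{k}^{(k+1)}\in\theta^{(k+1)}$. Iterating this step for $h=k+1,k+2,\dots,d$ (each time on the correctly chosen basis $\mathcal{B}_{h}=P_{h+1}\cdots P_{d}\mathcal{B}_{V}$, which is admissible because this is exactly the basis used to build $\mathcal{B}_{h+1}$, matching the one appearing in $\theta^{(h)}$) gives the chain of equivalences
\[
b_{k}^{(k)}\in\theta^{(k)}\iff b_{k}^{(k+1)}\in\theta^{(k+1)}\iff\cdots\iff b_{k}^{(d)}\in\theta^{(d)}=\theta.
\]
By the definition of $I_{V,\theta}$ the last condition is $i_{k}=0$, so $z_{k}=i_{k}$, completing the argument.

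The only subtlety — and the step I would verify most carefully — is that each application of Lemma~\ref{Lemma Step} is legitimate, which requires two things: that $U_{h}b_{k}^{(h)}=b_{k}^{(h-1)}$ (immediate from the recursive Definition~\ref{bases-defn}, since in both $\mathcal{F}_{1}^{(h)}$ and $\mathcal{F}_{2}^{(h)}$ the first $h-1$ coordinates of $b_{k}^{(h)}$ coincide with $b_{k}^{(h-1)}$), and that the ambient subspace $\theta^{(h)}$ lies in $\Theta_{h}$. The latter is exactly the final assertion of Lemma~\ref{Lemma 22}, applied repeatedly. With these two pieces in place the chain of biconditionals above is a routine induction on $d-k$ and the lemma follows.
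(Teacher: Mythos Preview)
Your proof is correct and follows essentially the same approach as the paper: both identify $e_{k}^{(k)}=b_{k}^{(k)}$ and then iterate Lemma~\ref{Lemma Step} to climb from $b_{k}^{(k)}\in\theta^{(k)}$ up to $b_{k}^{(d)}\in\theta$. Your version is slightly more careful, separating the base case $k=d$ and explicitly invoking Lemma~\ref{Lemma 22} to justify that each $\theta^{(h)}\in\Theta_{h}$, but the argument is the same.
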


\begin{proof}
By definition, it suffices to observe that $z_{h}=0$ if and only if
$e_{h}^{(h)}$ belongs to $P_{h+1,\mathcal{B}_{h+1}}\left(  \ldots
P_{d-1,\mathcal{B}_{d-1}}(P_{d,\mathcal{B}_{d}}\theta)\right)  $. But
$e_{h}^{(h)}=b_{h}^{(h)}$ is the last vector of the basis $\mathcal{B}_{h}$ so
that, by Lemma \ref{Lemma Step}, $e_{h}^{(h)}=b_{h}^{(h)}\in
P_{h+1,\mathcal{B}_{h+1}}\left(  \ldots P_{d-1,\mathcal{B}_{d-1}%
}(P_{d,\mathcal{B}_{d}}\theta)\right)  $ $\iff$ $b_{h}^{(h+1)}\in
P_{h+2,\mathcal{B}_{h+2}}\left(  \ldots\left(  P_{d-1,\mathcal{B}_{d-1}%
}(P_{d,\mathcal{B}_{d}}\theta)\right)  \right)  $ $\iff\dots\iff$
$b_{h}^{(d-1)}\in P_{d,\mathcal{B}_{d}}\theta$ $\iff$ $b_{h}^{(d)}\in\theta$
$\iff$ $i_{h}=0$.
\end{proof}

\begin{definition}
\label{Def mu}Fix $V=(v_{1},v_{2},\ldots,v_{d})\in\{1,2\}^{d}$, $I=(i_{1}%
,\ldots,i_{d})\in\{0,1\}^{d}$, $J=(j_{1},\ldots,j_{d})$ a non-negative
multi-index such that $j_{h}=0$ if $i_{h}=0$. For each $h=1,\ldots,d$ and for
$N\geq1$, define the operators%
\[
T_{h}^{v_{h},i_{h},j_{h}}:\mathcal{C}^{N}(\mathbb{R}^{h})\rightarrow
\mathcal{C}^{N-1}(\mathbb{R}^{h-1})
\]
(if $h=1$ then $\mathcal{C}^{N-1}(\mathbb{R}^{h-1})=\mathbb{C}$) as follows:
if $h=1$, set
\begin{align*}
T_{1}^{1,0,0}g  &  =\int_{0}^{1}g(x_{1})dx_{1},\\
T_{1}^{2,0,0}g  &  =0,\\
T_{1}^{1,1,j_{1}}g  &  =-\frac{d^{j_{1}}g}{dx^{j_{1}}}(0),\\
T_{1}^{2,1,j_{1}}g  &  =\frac{d^{j_{1}}g}{dx^{j_{1}}}(1).
\end{align*}
If $2\leq h\leq d$, for all $x^{\prime}\in\mathbb{R}^{h-1}$, set
\begin{align*}
T_{h}^{1,0,0}g(x^{\prime})  &  =\int_{0}^{1-x^{\prime}\cdot\mathbf{1}%
}g(x^{\prime},x_{h})dx_{h},\\
T_{h}^{2,0,0}g(x^{\prime})  &  =0,\\
T_{h}^{1,1,j_{h}}g(x^{\prime})  &  =-\frac{\partial^{j_{h}}g}{\partial
x_{h}^{j_{h}}}(x^{\prime},0),\\
T_{h}^{2,1,j_{h}}g(x^{\prime})  &  =\frac{\partial^{j_{h}}g}{\partial
x_{h}^{j_{h}}}(x^{\prime},1-x^{\prime}\cdot\mathbf{1}).
\end{align*}

Let us define the integro-differential functionals%
\[
\mu(V,I,J) =T_{1}^{v_{1},i_{1},j_{1}}T_{2}^{v_{2},i_{2},j_{2}}\ldots
T_{d}^{v_{d},i_{d},j_{d}}.
\]

\end{definition}

\begin{lemma}
Fix $V=(v_{1},v_{2},\ldots,v_{d})\in\{1,2\}^{d}$, $I=(i_{1},\ldots,i_{d}%
)\in\{0,1\}^{d}$ and let $J=(j_{1},\ldots,j_{d})$ be a non-negative
multi-index such that $j_{h}=0$ if $i_{h}=0$. Let $\mathcal{B=B}_{V}%
\in\mathcal{F}^{\left(  d\right)  }$ be the basis associated to the
multi-index V, and let $\theta\in\Theta_{d}$ be such that
$I_{V,\mathcal{\theta}}=I$. Then%
\[
\alpha(\theta,V,J)=(-1)^{|I|}\mu(V,I,J).
\]

\end{lemma}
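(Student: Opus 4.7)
The plan is to prove the identity by induction on the dimension $d$, matching the recursive structure of $\mu(V,I,J)$ (inherited from Definition \ref{Def mu}) with the recursive expression for $\alpha(\theta,V,J)$ obtained at the end of the proof of Lemma \ref{Lemma trasf simplesso standard}.

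First I would dispose of the base case $d=1$. Here $\mathcal{B}_V=\{1\}$ for both $V=1,2$, and the only elements of $\Theta_1$ are $\mathbb{R}$ and $\{0\}$. One has $I_{V,\mathbb{R}}=0$ (so $|I|=0$) while $I_{V,\{0\}}=1$ (so $|I|=1$). A direct four-case comparison of the formulas for $\alpha(\mathbb{R},V,j)$, $\alpha(\{0\},V,j)$ recorded in the proof of Lemma \ref{Lemma trasf simplesso standard} against $T_1^{v_1,i_1,j_1}$ of Definition \ref{Def mu} verifies $\alpha(\theta,V,J)=(-1)^{|I|}\mu(V,I,J)$ on the nose.

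For the inductive step, assume the identity in dimension $d-1$ and split on whether $e_d\in\theta$. If $e_d\in\theta$, then $i_d=0$, the proof of Lemma \ref{Lemma trasf simplesso standard} forces $\alpha(\theta,V,J)=0$ unless $v_d=1$ and $j_d=0$, in which case $\alpha(\theta,V,J)=\alpha(\theta',V',J')\circ T_d^{1,0,0}$. On the $\mu$-side, $T_d^{v_d,0,0}=0$ unless $v_d=1$, and then $T_d^{1,0,0}g(x')=\int_0^{1-x'\cdot\mathbf{1}}g(x',x_d)\,dx_d$. Since $i_d=0$ we have $|I|=|I'|$, and the induction hypothesis applied to the function $T_d^{1,0,0}g$ finishes this case, provided that $I_{V',\theta'}=I'$; this is exactly the content of Lemma \ref{Lemma Step} applied to $\theta'=P_{d,\mathcal{B}}\theta$.

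If $e_d\notin\theta$, then $i_d=1$ and one has the two sub-cases $v_d=1$ (using $\theta_1=P_{d,\mathcal{B}}\theta$ with $\mathcal{B}\in\mathcal{F}_1^{(d)}$) and $v_d=2$ (using $\theta_2=P_{d,\mathcal{B}}\theta$ with $\mathcal{B}\in\mathcal{F}_2^{(d)}$). The recurrence for $\alpha$ reads
\[
\alpha(\theta,(V',1),(J',j_d))g=\alpha(\theta_1,V',J')\bigl(\tfrac{\partial^{j_d}g}{\partial x_d^{j_d}}(x',0)\bigr),
\]
\[
\alpha(\theta,(V',2),(J',j_d))g=-\alpha(\theta_2,V',J')\bigl(\tfrac{\partial^{j_d}g}{\partial x_d^{j_d}}(x',1-x'\cdot\mathbf{1}_{d-1})\bigr),
\]
while the recurrence for $\mu$ reads $\mu(V,I,J)g=\mu(V',I',J')(T_d^{v_d,1,j_d}g)$ with $T_d^{1,1,j_d}g=-\tfrac{\partial^{j_d}g}{\partial x_d^{j_d}}(x',0)$ and $T_d^{2,1,j_d}g=\tfrac{\partial^{j_d}g}{\partial x_d^{j_d}}(x',1-x'\cdot\mathbf{1})$. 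Since $|I|=|I'|+1$, applying the induction hypothesis to $V'$, $I'$, $J'$ produces $(-1)^{|I'|}$, and the extra sign $(-1)$ comes from exactly one of the two places (from $T_d^{1,1,j_d}$ when $v_d=1$, or from the explicit minus sign in $\alpha$ when $v_d=2$). The book-keeping of these signs is the one delicate point, but it balances perfectly in both sub-cases; here too we need $I_{V',\theta_\ell}=I'$, which is once again Lemma \ref{Lemma Step} applied to $P_{d,\mathcal{B}_V}\theta$. The main (mild) obstacle is therefore not a deep estimate but a careful matching of the asymmetric sign conventions in the two recurrences; everything else is a tautological rewriting.
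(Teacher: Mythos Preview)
Your proof is correct and follows essentially the same recursive strategy as the paper's own (very terse) proof, which simply observes that the subspaces $\theta'$, $\theta_1$, $\theta_2$ appearing in the proof of Lemma~\ref{Lemma trasf simplesso standard} are all instances of $P_{d,\mathcal{B}_d}\theta$, and then appeals to the identity $Z_{\mathcal{B},\theta}=I_{V,\theta}$ to justify the induction. You have spelled out explicitly the base case, the case split on $e_d\in\theta$ versus $e_d\notin\theta$, and the sign bookkeeping, and you invoke Lemma~\ref{Lemma Step} directly rather than passing through $Z_{\mathcal{B},\theta}$ (which is itself proved via Lemma~\ref{Lemma Step}); the two routes are equivalent.
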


\begin{proof}
One has to go through the proof of Lemma
\ref{Lemma trasf simplesso standard} and notice that $\theta^{\prime}$,
$\theta_{1}$ and $\theta_{2}$ are all simply $P_{d,\mathcal{B}_{d}}\theta$,
and that $\mathcal{B}^{\prime}=P_{d}\mathcal{B=B}_{d-1}$. The conclusion
follows proceeding recursively and recalling that $Z_{\mathcal{B},\theta
}=I_{V,\theta}$.
\end{proof}

The above functional $\alpha(\theta,V,J)$ is a compactly supported
distribution, with support contained in the simplex $S_{d}$. In particular,
the dependence of $\alpha(\theta,V,J)$ on $V$, $\theta$ and $J$ is condensed
in the multi-indices $V$ and $I$. Recall that each $V\in\{1,2\}^{d}$
determines a unique basis in $\mathcal{F}^{\left(  d\right)  }$, precisely
$\mathcal{B}_{V}$. On the other hand, given a basis $\mathcal{B}\in
\mathcal{F}^{\left(  d\right)  }$ associated with the multi-index $V$, for any
$I\in\{0,1\}^{d}$ there might be several subspaces $\theta\in\Theta_{d}$ such
that $I_{V,\theta}=I$. By the above lemma, all these subspaces therefore
produce identical coefficients $\alpha(\theta,V,J)$.

Notice that if $v_{1}=2$, then $\mu(V,I,J)$ reduces to a linear combination of
derivatives of the Dirac delta centered at $( 1,0,\ldots,0) $ of order at most
$\vert J\vert$. This follows easily from the fact that the only point in the
simplex $S_{d}$ with first coordinate equal to $1$ is $( 1,0,\ldots,0) $. If
furthermore $i_{1}=0,$ then $\mu(V,I,J) =0$.

Assume $v_{1}=1$. We already mentioned that the support of $\mu(V,I,J)$ is
contained in the simplex $S_{d}$. Furthermore, for any $h\geq2$,

\begin{enumerate}
\item if $( v_{h},i_{h}) =( 1,1) $ then the support of $\mu(V,I,J)$ is
contained in the hyperplane $x_{h}=0.$

\item If $( v_{h},i_{h}) =( 2,1) $ then the support of $\mu(V,I,J)$ is
contained in the hyperplane $x_{h}=1-( x_{1}+\ldots+x_{h-1}) .$

\item If $( v_{h},i_{h}) =( 2,0) $ then $\mu(V,I,J)=0$.

\item The couple $( v_{h},i_{h}) =( 1,0) $ gives no restrictions on the
support of $\mu(V,I,J)$.
\end{enumerate}

Similarly, in the case $h=1$,

\begin{enumerate}\setcounter{enumi}{4}
\item if $( v_{1},i_{1}) =( 1,1) $ then the support of $\mu(V,I,J)$ is
contained in the hyperplane $x_{1}=0.$

\item The couple $( v_{1},i_{1}) =( 1,0) $ gives no restrictions on the
support of $\mu(V,I,J)$.
\end{enumerate}

\begin{remark}
If $g$ is smooth with compact support in $S_{d}$, then $\widehat{g\chi_{Sd}%
}(\xi)=\widehat{g}(\xi)$ has fast decay at infinity. Observe that this does
not contradict the above theorem. Indeed, by the previous remarks all
coefficients $\left\langle \mu(V,I,J),g\right\rangle $ vanish except when
$V=(1,\ldots,1)$ and $I=(0,\ldots,0)$ which implies that $J=(0,\ldots,0)$.
This choice of $V$ and $I$ forces $\theta=\mathbb{R}^{d}$ and $\mathcal{Q}%
(\theta)=\{0\}$. In this case we have%
\[
\widehat{g\chi_{Sd}}(0)=\int_{S_{d}}g(x)dx.
\]
For $\xi\neq0$ all the coefficients $\left\langle \mu(V,I,J),g\right\rangle $
vanish so that%
\[
\widehat{g\chi_{Sd}}(\xi)=\mathcal{R}_{\theta,w}(g,\xi).
\]

\end{remark}

\subsection{General Simplex\label{section-general}}

With an affine change of variables Lemma \ref{Lemma trasf simplesso standard}
for the standard simplex can be transferred to a general simplex.

\begin{definition}
Let $M\in GL(d,\mathbb{Z})$ and let $\mathcal{B}=\{b_{1},\ldots,b_{d}%
\}\in\mathcal{F}^{\left(  d\right)  }$. Then we shall denote by $M\mathcal{B}$
the basis $\{Mb_{1},\ldots,Mb_{d}\}$ and by $M\mathcal{F}^{\left(  d\right)
}$ the collection of the bases $M\mathcal{B}$ with $\mathcal{B}\in
\mathcal{F}^{\left(  d\right)  }$. Similarly $M\Theta_{d}$ is the collection
of all the spaces $M\theta$ with $\theta\in\Theta_{d}$. Clearly $M\Theta_{d}$
consists of all subspaces generated by any possible choice of vectors in
$M\mathcal{F}^{\left(  d\right)  }$. For every $\eta\in M\Theta_{d}$ we set%
\[
\mathcal{Q}_{M}(\eta)=\left\{  \xi\in\mathbb{R}^{d}:\text{for all }v\in
\bigcup\limits_{\mathcal{B}\in\mathcal{F}^{\left(  d\right)  }}M\mathcal{B}%
\text{, }\xi\cdot v=0~\text{iff }v\in\eta\right\}  .
\]

\end{definition}

\begin{lemma}
Let $M\in GL( d,\mathbb{Z}) $. For every $\theta\in\Theta_{d}$%
\[
\mathcal{Q}_{M}(M\theta)=( M^{t}) ^{-1}\mathcal{Q}( \theta).
\]

\end{lemma}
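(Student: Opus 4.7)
The plan is to unwind the definitions on the left-hand side and transport everything through the adjoint of $M$. Fix $\theta\in\Theta_d$ and $\xi\in\mathbb{R}^d$. By definition $\xi\in\mathcal{Q}_M(M\theta)$ iff for every $v\in\bigcup_{\mathcal{B}\in\mathcal{F}^{(d)}} M\mathcal{B}$ one has $\xi\cdot v=0 \iff v\in M\theta$. Since each such $v$ has the form $v=Mb$ for a unique $b\in\bigcup_{\mathcal{B}\in\mathcal{F}^{(d)}}\mathcal{B}$ (the map $b\mapsto Mb$ is a bijection between these two finite sets because $M$ is invertible), the condition can be rewritten entirely in terms of $b$.

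The key identity is the standard adjoint relation $\xi\cdot Mb=(M^t\xi)\cdot b$. Moreover, since $M$ is invertible on $\mathbb{R}^d$, one has $Mb\in M\theta \iff b\in\theta$. Therefore the condition ``$\xi\cdot v=0\iff v\in M\theta$ for all $v\in\bigcup M\mathcal{B}$'' is equivalent to ``$(M^t\xi)\cdot b=0\iff b\in\theta$ for all $b\in\bigcup\mathcal{B}$,'' which is exactly the statement that $M^t\xi\in\mathcal{Q}(\theta)$.

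Thus $\xi\in\mathcal{Q}_M(M\theta)$ iff $M^t\xi\in\mathcal{Q}(\theta)$, i.e. iff $\xi\in(M^t)^{-1}\mathcal{Q}(\theta)$, proving the claimed set equality. There is no real obstacle here: the statement is essentially a change-of-variables reformulation, and the only points to check are the bijectivity of $b\mapsto Mb$ on the union of bases (immediate from $M\in GL(d,\mathbb{Z})$) and the adjoint formula for the dot product (which is how $M^t$ enters).
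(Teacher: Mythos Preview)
Your proof is correct and essentially identical to the paper's: both unwind the definition via the adjoint identity $\xi\cdot Mb=(M^t\xi)\cdot b$ and the bijection $b\mapsto Mb$, the only cosmetic difference being that the paper starts the chain of equivalences from $(M^t)^{-1}\mathcal{Q}(\theta)$ rather than from $\mathcal{Q}_M(M\theta)$.
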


\begin{proof}
This follows immediately from the definitions. Observe that $\xi\in
(M^{t})^{-1}\mathcal{Q}(\theta)$ if and only if $M^{t}\xi\in\mathcal{Q}%
(\theta)$ if and only if, for all $b\in\bigcup\limits_{\mathcal{B}%
\in\mathcal{F}^{\left(  d\right)  }}\mathcal{B}$,
\[
M^{t}\xi\cdot b=\xi\cdot Mb=0~\text{iff }b\in\theta,
\]
if and only if, for every $v\in\bigcup\limits_{\mathcal{B}\in\mathcal{F}%
^{\left(  d\right)  },}M\mathcal{B}$,
\[
\xi\cdot v=0\text{ iff }v\in M\theta\text{,}%
\]
if and only if $\xi\in\mathcal{Q}_{M}(M\theta)$.
\end{proof}

\begin{lemma}
\label{Lemma trasf simplesso generico}Let $\mathcal{P}$ be a simplex in
$\mathbb{R}^{d}$ with vertices $\mathbf{0},\mathbf{m}_{1},\ldots
,\mathbf{m}_{d}\in\mathbb{Z}^{d}$ and let $M\in GL(d,\mathbb{Z})$ be the
$d\times d$ matrix with columns $\mathbf{m}_{1},\mathbf{m}_{2},\ldots
,\mathbf{m}_{d}$, which maps the standard simplex $S_{d}$ onto $\mathcal{P}$.
Let $q\in C^{w+1}(\mathbb{R}^{d})$ with $w\in\mathbb{N}$, let $Q(x)=q(x)\chi
_{\tau\mathcal{P}}(x)$ with $\tau>0$, and let $q_{\tau,M}(x)=q(\tau Mx)$.
Then, following the definitions and notations of the previous section, for
every $\theta\in\Theta_{d}$ and $\xi\in\mathcal{Q}_{M}(M\theta)$,%
\begin{align*}
\widehat{Q}(\xi)  &  =\int_{\tau\mathcal{P}}q(x)e^{-2\pi ix\cdot\xi}dx\\
&  =\tau^{d}\det(M)\sum_{V\in\{1,2\}^{d}}\sum_{|J|\leqslant w,J\sqsubseteq
I_{V,\theta}}\frac{(-1)^{|I_{V,\theta}|}\left\langle \mu(V,I_{V,\theta
},J),q_{\tau,M}\right\rangle e^{-2\pi i\tau M\lambda_{V}\cdot\xi}}%
{\prod_{b_{k}\in\mathcal{B}_{V}\setminus\theta}\left(  2\pi i\tau Mb_{k}%
\cdot\xi\right)  ^{j_{k}+1}}\\
&  \quad+\tau^{d}\det(M)\mathcal{R}_{\theta,w}(q_{\tau,M},\tau M^{t}\xi).
\end{align*}
In the above formula we adopt the convention: $\mathcal{B}_{V}=\{b_{1}%
,\ldots,b_{d}\}$ is the basis associated with the multi-index $V=(v_{1}%
,v_{2},\ldots,v_{d})$, $I_{V,\theta}=(i_{1},\ldots,i_{d})\in\{0,1\}^{d}$ is
the multi-index such that $i_{k}=0$ if and only if $b_{k}\in\theta$, $J\sqsubseteq I_{V,\theta}$ means that $j_k=0$ if $i_k=0$. The
coefficients $\left\langle \mu(V,I_{V,\theta},J),q_{\tau,M}\right\rangle $ and
the remainder $\mathcal{R}_{\theta,w}(q_{\tau,M},\tau M^{t}\xi)$ are the ones
defined in Lemma \ref{Lemma trasf simplesso standard} and Definition
\ref{Def mu}. In particular they satisfy the following:

(i) the coefficients $\left\langle \mu(V,I_{V,\theta},J),q_{\tau,
M}\right\rangle $ satisfy the estimate%
\[
\left\vert \left\langle \mu(V,I_{V,\theta},J),q_{\tau, M}\right\rangle
\right\vert \leqslant c 2^{( d-1) \vert J\vert}\sup_{\vert\alpha\vert
\leqslant\vert J\vert}\sup_{x\in S_{d}}\left\vert \frac{\partial^{\alpha
}q_{\tau, M}}{\partial x^{\alpha}}( x) \right\vert .
\]

(ii) The remainder $\mathcal{R}_{\theta,w}( q_{\tau, M},\tau M^{t}\xi) $ has
the following property: for every $\Omega>1/( 2\pi) $ and every $\tau_{0}>0$
there exists a constant $c=c( d,\Omega,\tau_{0}) >0$ independent of $q_{\tau,
M}$ and $w$ such that for every $\tau>\tau_{0}$ the coefficients $\left\{
\chi_{\mathcal{Q}_{M}(M\theta)}( n) \mathcal{R}_{\theta,w}( q_{\tau, M},\tau
M^{t}n) \right\}  _{n\in\mathbb{Z}^{d}}$ are the Fourier coefficients of a
function on the torus $\mathbb{T}^{d}$ bounded by%
\[
c ( 2^{d-1}\Omega\tau^{-1}) ^{w+1}\sup_{w-d+2\leqslant\vert\alpha
\vert\leqslant w+1}\sup_{x\in S_{d}}\left\vert \frac{\partial^{\alpha}q_{\tau,
M}}{\partial x^{\alpha}}( x) \right\vert .
\]

\end{lemma}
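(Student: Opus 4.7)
The plan is to reduce this statement to the already-proved Lemma \ref{Lemma trasf simplesso standard} via the affine change of variables $x = \tau M y$, which sends $S_d$ to $\tau\mathcal{P}$. The Jacobian contributes a factor $\tau^d \det(M)$, and the relation $\tau M y \cdot \xi = y \cdot \tau M^t\xi$ converts the oscillatory exponential into the standard one, giving the identity
\[
\widehat{Q}(\xi) \;=\; \tau^d \det(M)\, \widehat{q_{\tau,M}\chi_{S_d}}(\tau M^t\xi).
\]
This is the sole analytic step; everything else is a translation of notation.

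Next, I would verify that the cone condition on $\xi$ is compatible with the standard lemma. The preceding lemma $\mathcal{Q}_M(M\theta) = (M^t)^{-1}\mathcal{Q}(\theta)$ immediately gives $\xi \in \mathcal{Q}_M(M\theta) \iff M^t\xi \in \mathcal{Q}(\theta) \iff \tau M^t\xi \in \mathcal{Q}(\theta)$ (since $\tau>0$ and $\mathcal{Q}(\theta)$ is a cone). I would then apply Lemma \ref{Lemma trasf simplesso standard} to $g = q_{\tau,M}$ at the point $\tau M^t\xi$, and read off the substitutions $e^{-2\pi i \lambda_V \cdot \tau M^t\xi} = e^{-2\pi i \tau M\lambda_V \cdot \xi}$ and $2\pi i\, b_k \cdot \tau M^t\xi = 2\pi i\, \tau Mb_k \cdot \xi$. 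Using the identification $\alpha(\theta,V,J) = (-1)^{|I_{V,\theta}|}\mu(V,I_{V,\theta},J)$ established above, and observing that the constraint ``$j_k = 0$ whenever $b_k \in \theta$'' of the standard lemma coincides with ``$J \sqsubseteq I_{V,\theta}$'' by the very definition of $I_{V,\theta}$, one recovers exactly the main-term expression displayed in the statement, with remainder $\tau^d\det(M)\mathcal{R}_{\theta,w}(q_{\tau,M}, \tau M^t\xi)$.

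For the estimates, bound (i) is nothing but part (ii) of Lemma \ref{Lemma trasf simplesso standard} applied to $g = q_{\tau,M}$, with no change. For bound (ii), the key observation is that $\chi_{\mathcal{Q}_M(M\theta)}(n) = \chi_{\mathcal{Q}(\theta)}(M^t n)$, so the sequence we must control, $\{\chi_{\mathcal{Q}_M(M\theta)}(n)\mathcal{R}_{\theta,w}(q_{\tau,M},\tau M^t n)\}_{n\in\mathbb{Z}^d}$, is obtained from $\{\chi_{\mathcal{Q}(\theta)}(m)\mathcal{R}_{\theta,w}(q_{\tau,M},\tau m)\}_{m\in\mathbb{Z}^d}$ by the bijective relabelling $m = M^t n$ of $\mathbb{Z}^d$ (bijective because $M \in GL(d,\mathbb{Z})$ forces $\det M = \pm 1$). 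By Lemma \ref{Lemma ovvio}(iii) with $y = 0$ and $T = M^t$, this relabelling maps the Fourier coefficients of a bounded function to the Fourier coefficients of the composition with the lattice isomorphism $(M^t)^{-t} = M^{-1}$, preserving the sup-norm bound. Hence (ii) is inherited verbatim from part (iii) of the standard lemma.

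The main obstacle is purely bookkeeping: one must carefully align the ``$j_k = 0$ if $b_k \in \theta$'' convention of the standard case with the ``$J \sqsubseteq I_{V,\theta}$'' convention here, verify that $\lambda_V$ and the basis vectors $b_k$ are correctly pushed forward by $M$ under transposition, and check that the change of index in the Fourier coefficients does not disturb the boundedness constant. No new analytic ideas are needed beyond the change of variables and an invocation of Lemma \ref{Lemma ovvio}(iii).
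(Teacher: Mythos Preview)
Your approach is essentially identical to the paper's: the change of variables $x=\tau My$ giving $\widehat{Q}(\xi)=\tau^{d}\det(M)\,\widehat{q_{\tau,M}\chi_{S_d}}(\tau M^{t}\xi)$, the cone compatibility via $\mathcal{Q}_{M}(M\theta)=(M^{t})^{-1}\mathcal{Q}(\theta)$, the substitution into Lemma~\ref{Lemma trasf simplesso standard}, and the identification $\alpha(\theta,V,J)=(-1)^{|I_{V,\theta}|}\mu(V,I_{V,\theta},J)$ all match the paper exactly.

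There is, however, a gap in your justification of (ii). You invoke Lemma~\ref{Lemma ovvio}(iii), which requires $T\in SL(d,\mathbb{Z})$, and justify this by asserting that $M\in GL(d,\mathbb{Z})$ forces $\det M=\pm1$. Under the standard meaning of $GL(d,\mathbb{Z})$ that would be correct, but in this paper the notation is used loosely for integer matrices with nonzero determinant: the columns $\mathbf{m}_{1},\ldots,\mathbf{m}_{d}$ of an arbitrary integer simplex need not give a unimodular $M$ (hence the explicit factor $\det(M)$ throughout, and the sublattice discussion in Appendix~B where $L\mathbb{Z}^{d}$ is treated as a proper sublattice). When $|\det M|>1$ the map $n\mapsto M^{t}n$ is only an injection of $\mathbb{Z}^{d}$ onto the proper subgroup $M^{t}\mathbb{Z}^{d}$, so Lemma~\ref{Lemma ovvio}(iii) does not apply.

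The paper closes this gap by citing Lemma~\ref{annichilatore} instead. The sequence $\{\chi_{\mathcal{Q}(\theta)}(m)\mathcal{R}_{\theta,w}(q_{\tau,M},\tau m)\}_{m\in\mathbb{Z}^{d}}$ gives the Fourier coefficients of a bounded function $F$; restricting to the subgroup $\mathcal{H}=M^{t}\mathbb{Z}^{d}$ yields, by Lemma~\ref{annichilatore}, coefficients of some $g$ with $\|g\|_{\infty}\le\|F\|_{\infty}$. Since $\widehat{g}$ is supported on $M^{t}\mathbb{Z}^{d}$, $g$ is actually $M^{-1}\mathbb{Z}^{d}$-periodic, so $h(x)=g(M^{-1}x)$ is $\mathbb{Z}^{d}$-periodic with the same sup norm and $\widehat{h}(n)=\widehat{F}(M^{t}n)$, which is exactly the sequence you need. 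This is a one-line fix once you replace the citation.
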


\begin{proof}
This lemma follows from Lemma \ref{Lemma trasf simplesso standard} via an
affine change of variables. Define%
\[
G( x) =Q( \tau Mx) =q( \tau Mx) \chi_{\tau\mathcal{P}}( \tau Mx) =q_{\tau, M}(
x) \chi_{S_{d}}( x) .
\]
Then,%
\begin{align*}
\widehat{Q}( \xi)  &  =\int_{\mathbb{R}^{d}}Q( x) e^{-2\pi i\xi\cdot x}%
dx=\tau^{d}\det( M) \int_{\mathbb{R}^{d}}Q( \tau Mx) e^{-2\pi i\xi\cdot\tau
Mx}dx\\
&  =\tau^{d}\det( M) \int_{\mathbb{R}^{d}}Q( \tau Mx) e^{-2\pi i\tau M^{t}%
\xi\cdot x}dx\\
&  =\tau^{d}\det( M) \widehat{G}( \tau M^{t}\xi) .
\end{align*}
Applying Lemma \ref{Lemma trasf simplesso standard} to the function $G( x) $
we obtain the desired expansion. The same lemma also shows that $\left\{
\chi_{\mathcal{Q}( \theta) }( n) \mathcal{R}_{\theta,w}( q_{\tau,M},\tau n)
\right\}  _{n\in\mathbb{Z}^{d}}$ are the Fourier coefficients of a function on
the torus bounded by%
\[
U ( 2^{d-1}\Omega\tau^{-1}) ^{w+1}\sup_{w-d+2\leqslant\vert\alpha
\vert\leqslant w+1}\sup_{x\in S_{d}}\left\vert \frac{\partial^{\alpha}q_{\tau,
M}}{\partial x^{\alpha}}( x) \right\vert
\]
where $U$ is the same constant that appears in Lemma
\ref{Lemma trasf simplesso standard}. By Lemma \ref{annichilatore} in Appendix
A, $\left\{  \mathcal{\chi}_{\mathcal{Q}(\theta)}( M^{t}n) \mathcal{R}%
_{\theta,w}( q_{\tau, M},\tau M^{t}n) \right\}  _{n\in\mathbb{Z}^{d}}$ are the
Fourier coefficients of a function on the torus satisfying the same bound.
\end{proof}

\section{Expansion in multivariate Bernoulli polynomials\label{Sect main res}}

In this section we shall prove our Theorem \ref{main-thm}. Let us
start with a lemma on the Fourier expansion of the multivariate Bernoulli polynomials.

\begin{lemma}
\label{Lemma Fourier multi Bernoulli}Let $J=(j_{1},j_{2},...,j_{d})$ be a
multi-index of non-negative integers and let $L\in GL(d,\mathbb{Z})$. If
$\mathfrak{B}_{J,L}(x)$ are as in Definition \ref{def:Bernoulli poly}, then,
for every $x\in\mathbb{R}^{d}$,
\[
\mathfrak{B}_{J,L}(x)=\lim_{\varepsilon\rightarrow0+}\left\{  (-1)^{|I|}%
\sum_{n\in\Delta(I,L)}\widehat{\varphi}(\varepsilon n)\dfrac{e^{2\pi in\cdot
x}}{(2\pi iLn)^{J}}\right\}  .
\]
Here, $I=(i_{1},\ldots,i_{d})$ with $i_{k}=0$ if $j_{k}=0$ and $i_{k}=1$ if
$j_{k}>0$, that is, $J\sqsubseteq I$, and the set $\Delta(I,L)$ is the subset of frequencies in
$\mathbb{Z}^{d}$ defined by%
\[
\Delta(I,L)=\left\{  n\in\mathbb{Z}^{d}:\ (Ln)_{k}=0\ \text{iff}%
\ i_{k}=0\right\}  .
\]
Finally, in the denominators
\[
(2\pi iLn)^{J}=(2\pi i\left(  Ln\right)  _{1})^{j_{1}}(2\pi i\left(
Ln\right)  _{2})^{j_{2}}\cdots(2\pi i\left(  Ln\right)  _{d})^{j_{d}}%
\]
we adopt the convention that $0^{0}=1$. In particular, all the denominators in
the Fourier expansion of $\mathfrak{B}_{J,L}(x)$ are different from zero.
\end{lemma}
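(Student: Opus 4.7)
The plan is to deduce the formula by combining Poisson summation (in the regularized form of Lemma \ref{Poisson}) with Euler's one-dimensional trigonometric expansion of $B_n(x)$. Keeping the factor $\widehat{\varphi}(\varepsilon n)$ inside the limit is what handles the (only) conditional convergence of the resulting series.

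First I would compute the ordinary Fourier transform of $B_{J,L}$ at integer frequencies. By the affine change of variables $y=(L^{-1})^{t}x$ one obtains
\[
\widehat{B_{J,L}}(n)=\widehat{B_{J}}(Ln),\qquad n\in\mathbb{Z}^{d}.
\]
Since $B_{J}$ is a tensor product supported on $[0,1)^{d}$, the right hand side factorizes as $\prod_{k=1}^{d}\widehat{B_{j_{k}}\chi_{[0,1)}}((Ln)_{k})$. For the one-dimensional factors, two cases arise. If $j_{k}=0$ then $\widehat{B_{0}\chi_{[0,1)}}(m)=\delta_{m,0}$. If $j_{k}\geqslant 1$ then, either from Euler's theorem in Section \ref{section-mordell} or by direct integration by parts, one gets $\widehat{B_{j_{k}}\chi_{[0,1)}}(m)=-(2\pi i m)^{-j_{k}}$ for $m\neq 0$ and $0$ for $m=0$. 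Multiplying these together and collecting the $(-1)^{|I|}$ factor gives exactly
\[
\widehat{B_{J,L}}(n)=\begin{cases} (-1)^{|I|}(2\pi iLn)^{-J}, & n\in\Delta(I,L),\\ 0, & \text{otherwise,}\end{cases}
\]
with the convention $0^{0}=1$. In particular, the selection rule $n\in\Delta(I,L)$ appears naturally and guarantees that all denominators are non-zero.

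Second, I would apply Lemma \ref{Poisson} to the compactly supported function $B_{J,L}$: for every $\varepsilon>0$,
\[
\sum_{n\in\mathbb{Z}^{d}}(\varphi_{\varepsilon}\ast B_{J,L})(x+n)=\sum_{n\in\mathbb{Z}^{d}}\widehat{\varphi}(\varepsilon n)\,\widehat{B_{J,L}}(n)\,e^{2\pi i n\cdot x}.
\]
Substituting the formula for $\widehat{B_{J,L}}(n)$ collapses the right hand side onto $\Delta(I,L)$ and produces precisely the expression claimed in the statement. As $\varepsilon\to 0+$, Lemma \ref{Def Q-tilde} shows that the left hand side converges pointwise to the regularized periodization $\mathfrak{B}_{J,L}(x)$ of Definition \ref{def:Bernoulli poly}: away from the jump set this is immediate, and on the jump set the radial symmetry of $\varphi$ reproduces exactly the solid-angle mean prescribed there.

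The main (and essentially only) obstacle is convergence. Without the cutoff $\widehat{\varphi}(\varepsilon n)$ the right hand side need not be absolutely summable (e.g.\ if $J$ has a single non-zero entry equal to $1$), so one cannot pass to the limit termwise and the symbol $\lim_{\varepsilon\to 0+}$ in the statement is essential. The approach above avoids this difficulty by keeping the regularization in place on both sides of Poisson summation throughout, and only after that does one recognize the two sides as the two ends of the claimed equality.
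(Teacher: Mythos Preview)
Your argument is correct and is essentially the paper's own proof: both compute the Fourier coefficients of $B_{J,L}$ by reducing to the tensor product of one-dimensional Bernoulli polynomials, then apply the mollified Poisson summation formula and let $\varepsilon\to 0+$ using the radial regularization built into Definition~\ref{def:Bernoulli poly}. Your remark that Lemma~\ref{Def Q-tilde} applies because $B_{J,L}$ is a polynomial times the characteristic function of a parallelepiped, and your explicit discussion of why the cutoff $\widehat{\varphi}(\varepsilon n)$ is needed, are welcome clarifications but do not depart from the paper's approach.
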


\begin{proof}
Recall that if $f(x)$ is an integrable function with Fourier transform
$\widehat{f}(\xi)$ and $L$ is a non-singular matrix, then $\widehat{f}(L\xi)$
is the Fourier transform of $|\det L|^{-1}f\left(  (L^{-1})^{t}x\right)  $.
Moreover, if $f(x)$ is a function with bounded support, the Poisson summation
formula gives
\[
|\det L|^{-1}\sum_{n\in\mathbb{Z}^{d}}\varphi_{\varepsilon}\ast f\left(
(L^{-1})^{t}(x+n)\right)  =\sum_{n\in\mathbb{Z}^{d}}\widehat{\varphi
}(\varepsilon n)\widehat{f}(Ln)e^{2\pi inx}.
\]
Observe that the series on the left is finite and the one on the right is
absolutely convergent, so that the application of the summation formula is
legitimate. Then the lemma follows by choosing $f(x)=B_{J}(x)=B_{j_{1}}%
(x_{1})\cdots B_{j_{d}}(x_{d})$. Indeed, for every $n$ in $\mathbb{Z}^{d}$,
one has
\begin{align*}
&  \int_{\mathbb{R}^{d}}B_{J}(x)e^{-2\pi in\cdot x}dx=\prod_{k=1}^{d}\int
_{0}^{1}B_{j_{k}}(x_{k})e^{-2\pi in_{k}\cdot x_{k}}dx_{k}\\
&  =\prod_{k=1}^{d}%
\begin{cases}
-1/\left(  2\pi in_{k}\right)  ^{j_{k}} & \text{if }j_{k}\neq0\text{ and
}n_{k}\neq0,\\
0 & \text{if }j_{k}\neq0\text{ and }n_{k}=0,\\
0 & \text{if }j_{k}=0\text{ and }n_{k}\neq0,\\
1 & \text{if }j_{k}=0\text{ and }n_{k}=0.
\end{cases}
\end{align*}
Hence, by the definition of $\Delta(I,L)$,
\[
\int_{\mathbb{R}^{d}}B_{J}(x)e^{-2\pi iLn\cdot x}dx=%
\begin{cases}
(-1)^{|I|}/(2\pi iLn)^{J} & \text{if }n\in\Delta(I,L),\\
0 & \text{if }n\notin\Delta(I,L).\
\end{cases}
\]

\end{proof}

We shall also need the following lemma.

\begin{lemma}
\label{Lemma unione}For a fixed $V\in\{ 1,2\} ^{d}$ and for every $I\in\{
0,1\} ^{d}$ we have%
\[
\bigcup_{\theta\in\Theta_{d}:I_{V,\theta}=I}\left[  \mathbb{Z}^{d}%
\cap\mathcal{Q}_{M}( M\theta) \right]  =\Delta\left(  I,( MD_{V}) ^{t}\right)
.
\]
Here if, as usual, $\mathcal{B}_{V}=\{b_{1},\ldots,b_{d}\}$ is the basis
associated with the multi-index $V$, then $I_{V,\theta}=( i_{1},\ldots,i_{d})
$ where $i_{k}=0$ if and only if $b_{k}\in\theta$, and $D_{V}$ is the matrix
with columns $b_{1},\ldots,b_{d}$.
\end{lemma}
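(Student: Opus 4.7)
The plan is to reduce everything to the defining condition of $\mathcal{Q}(\theta)$ through the identity $\mathcal{Q}_M(M\theta)=(M^t)^{-1}\mathcal{Q}(\theta)$ established in the previous lemma, and then match the resulting orthogonality conditions with the condition defining $\Delta(I,(MD_V)^t)$. The central observation is that, since $D_V=[b_1\,|\,b_2\,|\cdots|\,b_d]$, for every $n\in\mathbb{Z}^d$ one has
\[
\bigl((MD_V)^t n\bigr)_k = n\cdot Mb_k = M^t n\cdot b_k, \qquad k=1,\ldots,d.
\]
Therefore $n\in\Delta(I,(MD_V)^t)$ is equivalent to the condition ``$M^t n\cdot b_k=0$ iff $i_k=0$'' for every vector $b_k\in\mathcal{B}_V$.

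For the inclusion $\subseteq$, I would take $n\in\mathbb{Z}^d\cap\mathcal{Q}_M(M\theta)$ with $I_{V,\theta}=I$. The previous lemma yields $M^t n\in\mathcal{Q}(\theta)$, so by the very definition of $\mathcal{Q}(\theta)$, $M^t n\cdot b_k=0$ iff $b_k\in\theta$, and the latter is equivalent to $i_k=0$ by definition of $I_{V,\theta}$. The observation above then gives $n\in\Delta(I,(MD_V)^t)$.

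For the reverse inclusion $\supseteq$, given $n\in\Delta(I,(MD_V)^t)$, I would construct $\theta$ explicitly as
\[
\theta:=\operatorname{span}\Bigl\{\,b\in\bigcup_{\mathcal{B}\in\mathcal{F}^{(d)}}\mathcal{B}\ :\ M^t n\cdot b=0\,\Bigr\}.
\]
By its definition $\theta\in\Theta_d$, and since it is spanned by vectors lying in $(M^t n)^\perp$, one has $\theta\subseteq (M^t n)^\perp$. This single inclusion is what drives the rest: it forces $M^t n\cdot b=0$ for every $b\in\theta$, while membership of $b$ in the spanning family gives the converse implication in the definition of $\mathcal{Q}(\theta)$. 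Applied to $b=b_k\in\mathcal{B}_V$ this yields $b_k\in\theta$ iff $M^t n\cdot b_k=0$ iff $((MD_V)^t n)_k=0$ iff $i_k=0$, that is $I_{V,\theta}=I$; applied to all other $b\in\bigcup\mathcal{B}$ it yields $M^t n\in\mathcal{Q}(\theta)$, whence $n\in\mathcal{Q}_M(M\theta)$ by the previous lemma.

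The only non-routine point is ensuring that the explicitly constructed $\theta$ genuinely realizes the ``iff'' in the definition of $\mathcal{Q}(\theta)$; this is where the inclusion $\theta\subseteq(M^t n)^\perp$ (automatic from the span construction) provides the nontrivial half. Everything else is a straightforward unwinding of definitions, so once this observation is in place the two inclusions follow immediately.
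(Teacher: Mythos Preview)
Your proposal is correct and follows essentially the same approach as the paper's proof: both directions proceed by unwinding the orthogonality conditions, and for the reverse inclusion you construct exactly the same subspace $\theta$ (the span of all basis vectors $b$ with $M^t n\cdot b=0$, equivalently $Mb\cdot n=0$) that the paper denotes $\theta_m$. The only cosmetic difference is that you route through the identity $\mathcal{Q}_M(M\theta)=(M^t)^{-1}\mathcal{Q}(\theta)$ while the paper works directly with the definition of $\mathcal{Q}_M(M\theta)$; your explicit remark that $\theta\subseteq(M^t n)^\perp$ secures the ``iff'' is a slight elaboration on what the paper calls ``obvious''.
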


\begin{proof}
Assume that
\[
m\in\bigcup_{\theta\in\Theta_{d}:I_{V,\theta}=I}\left[  \mathbb{Z}^{d}%
\cap\mathcal{Q}_{M}( M\theta) \right]
\]
Then $m\in\mathcal{Q}_{M}( M\theta) $ for some $\theta$ such that
$I_{V,\theta}=I$. Thus, if $b_{k}\in\mathcal{B}_{V}$, then $m\cdot Mb_{k}=0$
if and only if $b_{k}\in\theta$, but since $I_{V,\theta}=I$, then $b_{k}%
\in\theta$ if and only if $i_{k}=0$. Thus $m\cdot Mb_{k}=0$ if and only if
$i_{k}=0$, which implies $m\in\Delta\left(  I,( MD_{V}) ^{t}\right)  $, since
\begin{align*}
\Delta\left(  I,( MD_{V}) ^{t}\right)   &  =\left\{  m\in\mathbb{Z}^{d}: \ (
MD_{V}) ^{t}m\cdot e_{k}=0\text{ iff }i_{k}=0\right\} \\
&  =\left\{  m\in\mathbb{Z}^{d}\text{: }\left(  ( MD_{V}) ^{t}m\right)
^{t}e_{k}=0\text{ iff }i_{k}=0\right\} \\
&  =\left\{  m\in\mathbb{Z}^{d}\text{: }m^{t}MD_{V}e_{k}=0\text{ iff }%
i_{k}=0\right\} \\
&  =\left\{  m\in\mathbb{Z}^{d}\text{: }m^{t}Mb_{k}=0\text{ iff }%
i_{k}=0\right\} \\
&  =\left\{  m\in\mathbb{Z}^{d}\text{: }m\cdot Mb_{k}=0\text{ iff }%
i_{k}=0\right\}  .
\end{align*}

Conversely, if $m\in\Delta\left(  I,( MD_{V}) ^{t}\right)  ,$ that is if
$m\in\mathbb{Z}^{d}$ is such that $Mb_{k}\cdot m=0$ if and only if $i_{k}=0,$
then, calling%
\[
\theta_{m}=\left\langle b\text{ in the bases}:Mb\cdot m=0\right\rangle ,
\]
we have $I_{V,\theta_{m}}=I$, (indeed, setting $I_{V,\theta_{m}}=(
r_{1},\ldots,r_{d}) $, we have $r_{k}=0$ if and only if $b_{k}\in\theta_{m}$
if and only if $Mb_{k}\cdot m=0$ if and only if $i_{k}=0$). Finally,
obviously, $m\in Q_{M}( M\theta_{m}) $.
\end{proof}
We are ready to prove our main result.
\begin{proof}
[Proof of Theorem \ref{main-thm}]Let $Q(x)=\chi_{\tau\mathcal{P}}(x)q(x)$. We
have%
\begin{align*}
\sum_{n\in\mathbb{Z}^{d}}\varphi_{\varepsilon}\ast Q(x+n)  &  =\sum
_{n\in\mathbb{Z}^{d}}\widehat{\varphi}(\varepsilon n)\widehat{Q}(n)e^{2\pi
in\cdot x}\\
&  =\sum_{\theta\in\Theta_{d}}\sum_{n\in\mathbb{Z}^{d}\cap\mathcal{Q}%
_{M}(M\theta)}\widehat{\varphi}(\varepsilon n)\widehat{Q}(n)e^{2\pi in\cdot
x}.
\end{align*}
Since the multiplier $\widehat{\varphi}(\varepsilon n)$ is
rapidly decreasing the series converge absolutely and the rearrangements of
the terms of the series are allowed. Then, by Lemma
\ref{Lemma trasf simplesso generico},
\begin{align*}
&  \sum_{n\in\mathbb{Z}^{d}}\varphi_{\varepsilon}\ast Q(x+n)\\
& =\!\!\sum_{\theta\in\Theta_{d}}\!\!\sum_{\substack{n\in\mathbb{Z}^{d}\cap\\\mathcal{Q}%
_{M}(M\theta)}}\!\!\!\!\widehat{\varphi}(\varepsilon n)\tau^{d}\det(M)\!\!\!\sum
_{V\in\{1,2\}^{d}}\!\sum_{|J|\leqslant w}\!\!\!\frac{(-1)^{|I_{V,\theta}|}\left\langle
\mu(V,I_{V,\theta},J),q_{\tau,M}\right\rangle e^{-2\pi i(\tau M\lambda
_{V})\cdot n}e^{2\pi in\cdot x}}{\prod_{b_{k}\in\mathcal{B}_{V}\setminus\theta}(2\pi i\tau
Mb_{k}\cdot n)^{j_{k}+1}}\\
&  \quad+\sum_{\theta\in\Theta_{d}}\sum_{n\in\mathbb{Z}^{d}\cap\mathcal{Q}%
_{M}(M\theta)}\widehat{\varphi}(\varepsilon n)\tau^{d}\det(M)\mathcal{R}%
_{\theta,w}(q_{\tau,M},\tau M^{t}n)e^{2\pi in\cdot x}\\
&  =\sum_{V\in\{1,2\}^{d}}\sum_{\theta\in\Theta_{d}}\Phi(V,\theta
)+\sum_{\theta\in\Theta_{d}}\Psi(\theta)
\end{align*}
where%
\begin{align*}
\Phi(V,\theta)=\tau^{d}\det(M)\sum_{|J|\leqslant w}  &  \left\langle
\mu(V,I_{V,\theta},J),q_{\tau,M}\right\rangle \\
&  \times(-1)^{|I_{V,\theta}|}\sum_{n\in\mathbb{Z}^{d}\cap\mathcal{Q}%
_{M}(M\theta)}\widehat{\varphi}(\varepsilon n)\frac{\,e^{2\pi i(x-\tau
M\lambda_{V})\cdot n}}{\prod_{b_{k}\in\mathcal{B}_{V}\setminus\theta}(2\pi
i\tau Mb_{k}\cdot n)^{j_{k}+1}}.
\end{align*}
Rearranging the sum we have%
\begin{align*}
\sum_{V\in\{1,2\}^{d}}\sum_{\theta\in\Theta_{d}}&\Phi(V,\theta)   =\sum
_{V\in\{1,2\}^{d}}\sum_{I\in\{0,1\}^{d}}\sum_{\theta\in\Theta_{d}:I_{V,\theta
}=I}\Phi(V,\theta)\\
&  =\sum_{V\in\{1,2\}^{d}}\sum_{I\in\{0,1\}^{d}}\sum_{\theta\in\Theta
_{d}:I_{V,\theta}=I}\tau^{d}\det(M)\sum_{|J|\leqslant w}\left\langle
\mu(V,I,J),q_{\tau,M}\right\rangle \\
&  \quad\quad\times(-1)^{|I|}\sum_{n\in\mathbb{Z}^{d}\cap\mathcal{Q}%
_{M}(M\theta)}\widehat{\varphi}(\varepsilon n)\frac{e^{2\pi i(x-\tau
M\lambda_{V})\cdot n}}{\prod_{b_{k}\in\mathcal{B}_{V}\setminus\theta}(2\pi
i\tau Mb_{k}\cdot n)^{j_{k}+1}}\\
&  =\det(M)\sum_{V\in\{1,2\}^{d}}\sum_{I\in\{0,1\}^{d}}\sum_{|J|\leqslant
w}\tau^{d}\left\langle \mu(V,I,J),q_{\tau,M}\right\rangle \\
&  \quad\times(-1)^{|I|}\sum_{\theta\in\Theta_{d}:I_{V,\theta}=I}\sum
_{n\in\mathbb{Z}^{d}\cap\mathcal{Q}_{M}(M\theta)}\widehat{\varphi}(\varepsilon
n)\frac{e^{2\pi i(x-\tau M\lambda_{V})\cdot n}}{\prod_{b_{k}\in\mathcal{B}%
_{V}\setminus\theta}(2\pi i\tau Mb_{k}\cdot n)^{j_{k}+1}}\\
&  =\det(M)\sum_{V\in\{1,2\}^{d}}\sum_{I\in\{0,1\}^{d}}\sum_{|J|\leqslant
w}\tau^{d-|J|-|I|}\left\langle \mu(V,I,J),q_{\tau,M}\right\rangle \\
&  \quad\times(-1)^{|I|}\sum_{\theta\in\Theta_{d}:I_{V,\theta}=I}\sum
_{n\in\mathbb{Z}^{d}\cap\mathcal{Q}_{M}(M\theta)}\widehat{\varphi}(\varepsilon
n)\frac{e^{2\pi i(x-\tau M\lambda_{V})\cdot n}}{\prod_{b_{k}\in\mathcal{B}%
_{V}}(2\pi iMb_{k}\cdot n)^{j_{k}+i_{k}}}%
\end{align*}
with the usual convention that in the denominators $0^{0}=1$. Now, since
$\mathbb{Z}^{d}\cap\left(  \bigcup_{\theta:I_{V,\theta}=I}\mathcal{Q}%
_{M}(M\theta)\right)  =\Delta\left(  I,(MD_{V})^{t}\right)  $ by Lemma
\ref{Lemma unione}, we have
\begin{align*}
\sum_{\theta\in\Theta_{d}:I_{V,\theta}=I}\sum_{n\in\mathbb{Z}^{d}%
\cap\mathcal{Q}_{M}(M\theta)}\widehat{\varphi}  &  (\varepsilon n)\frac
{e^{2\pi i(x-\tau M\lambda_{V})\cdot n}}{\prod_{b_{k}\in\mathcal{B}_{V}}(2\pi
iMb_{k}\cdot n)^{j_{k}+i_{k}}}\\
&  =\sum_{n\in\Delta\left(  I,(MD_{V})^{t}\right)  }\widehat{\varphi
}(\varepsilon n)\frac{e^{2\pi in\cdot(x-\tau M\lambda_{V})}}{\prod_{b_{k}%
\in\mathcal{B}_{V}}(2\pi iMb_{k}\cdot n)^{j_{k}+i_{k}}}.
\end{align*}
Observe that $Mb_{k}\cdot n=(b_{k}^{t}M^{t})n=(D_{V}^{t}M^{t})_{k}n=\left(
(MD_{V})^{t}n\right)  _{k}=\left(  (MD_{V})^{t}n\right)  \cdot e_{k}$. Hence,
\begin{align*}
&  (-1)^{|I|}\sum_{n\in\Delta\left(  I,(MD_{V})^{t}\right)  }\widehat{\varphi
}(\varepsilon n)\frac{e^{2\pi in\cdot(x-\tau M\lambda_{V})}}{\prod_{b_{k}%
\in\mathcal{B}_{V}}(2\pi iMb_{k}\cdot n)^{j_{k}+i_{k}}}\\
&  =(-1)^{|I|}\sum_{n\in\Delta\left(  I,(MD_{V})^{t}\right)  }\widehat
{\varphi}(\varepsilon n)\frac{e^{2\pi in\cdot(x-\tau M\lambda_{V})}}%
{\prod_{b_{k}\in\mathcal{B}_{V}}\left(  2\pi i\left(  (MD_{V})^{t}n\right)
\cdot e_{k}\right)  ^{j_{k}+i_{k}}}\\
&  =(-1)^{|I|}\sum_{n\in\Delta\left(  I,(MD_{V})^{t}\right)  }\widehat
{\varphi}(\varepsilon n)\frac{e^{2\pi in\cdot(x-\tau M\lambda_{V})}}{\left(
2\pi i(MD_{V})^{t}n\right)  ^{J+I}}\\
&  =\varphi_{\varepsilon}\ast\mathfrak{B}_{J+I,(MD_{V})^{t}}(x-\tau
M\lambda_{V}).
\end{align*}
Since $J\sqsubseteq I$, the vanishing
components of $J+I$ appear in the same spots as those of $I$, so that
$\Delta\left(  I,(MD_{V})^{t}\right)  =\Delta\left(  I+J,(MD_{V})^{t}\right)
$. Hence, the principal part becomes
\begin{align*}
&  \sum_{V\in\{1,2\}^{d}}\sum_{\theta\in\Theta_{d}}\Phi(V,\theta)\\
&  =\det(M)\!\!\!\sum_{V\in\{1,2\}^{d}}\sum_{I\in\{0,1\}^{d}}\sum_{|J|\leqslant
w}\tau^{d-|J|-|I|}\left\langle \mu(V,I,J),q_{\tau,M}\right\rangle
\varphi_{\varepsilon}\ast\mathfrak{B}_{J+I,(MD_{V})^{t}}(x-\tau M\lambda_{V}).
\end{align*}
Let us consider now the remainder%
\[
\Psi(\theta)=\tau^{d}\det(M)\sum_{n\in\mathbb{Z}^{d}\cap\mathcal{Q}%
_{M}(M\theta)}\widehat{\varphi}(\varepsilon n)\mathcal{R}_{\theta,w}%
(q_{\tau,M},\tau M^{t}n)e^{2\pi in\cdot x}.
\]
For every $\theta\in\Theta_{d}$, by Lemma \ref{Lemma trasf simplesso generico}%
, $\Psi(\theta)$ is a function bounded by%
\[
c\tau^{d-w-1}\det(M)(2^{d-2}\pi^{-1}+\delta)^{w+1}\sup_{w-d+2\leqslant
|\alpha|\leqslant w+1}\sup_{x\in S_{d}}\left\vert \frac{\partial^{\alpha
}q_{\tau,M}}{\partial x^{\alpha}}(x)\right\vert
\]
It follows that $\sum_{\theta\in\Theta_{d}}\Psi(\theta)$ is a bounded
function, with the same bound. Letting $\varepsilon\rightarrow0$ gives the
desired result.
\end{proof}

\section{Proofs of Theorems \ref{Thm 1} and \ref{Thm 2} \label{Sect proofs}}

Theorem \ref{Thm 1} and Theorem \ref{Thm 2} are corollaries of Theorem
\ref{main-thm}. In particular, Theorem \ref{Thm 1} follows applying the next result to a
decomposition of the given polytope into simplices.

\begin{theorem}
\label{Thm gammak simplex}Let $S_{d}$ be the standard simplex in
$\mathbb{R}^{d}$, let $\mathcal{P}=MS_{d}$ with $M\in GL(d,\mathbb{Z})$. Let
$p\in\mathbb{Z}^{d}$, let $w$ be a non-negative integer and let $f\in
C^{w+1}(\mathbb{R}^{d})$. Then, there exists a numerical sequence $\{\gamma
_{k}\}_{0<k\leqslant w/2}$ such that for every positive integer $N$ we have%
\[
N^{-d}\sum_{n\in\mathbb{Z}^{d}}\omega_{p+\mathcal{P}}(N^{-1}n)f(N^{-1}%
n)=\int_{p+\mathcal{P}}f(x)dx+\sum_{0<k\leqslant w/2}\gamma_{k}N^{-2k}%
+O(N^{-w-1}).
\]
More precisely, with the notation in Theorem \ref{main-thm},%
\[
\gamma_{k}=\det(M)\sum_{V\in\{1,2\}^{d}}\sum_{I\in\{0,1\}^{d}}\sum
_{J\sqsubseteq I,\ |I+J|=2k}\,\left\langle \mu(V,I,J),f(p+M\cdot)\right\rangle
\mathfrak{B}_{J+I,(MD_{V})^{t}}(0).
\]

\end{theorem}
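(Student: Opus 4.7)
The plan is to derive the statement directly from Theorem \ref{EM-d} by a change of variable that reduces the given weighted sum to the setting of that theorem. Using the translation and scaling invariances of the weight (namely $\omega_{A+t}(x)=\omega_A(x-t)$ and $\omega_{\lambda A}(\lambda x)=\omega_A(x)$, both of which follow at once from Lemma \ref{Def Q-tilde}), the substitution $n=m+Np$ turns the left-hand side into
$$N^{-d}\sum_{m\in\mathbb{Z}^d}\omega_{N\mathcal{P}}(m)\,q(m),\qquad q(x):=f(p+N^{-1}x).$$
Setting $\tau=N$, the dilated function $q_{\tau,M}(x)=q(NMx)=f(p+Mx)$ is independent of $N$, so all its partial derivatives on $S_d$ are bounded by a constant depending only on $f$, $p$, $M$, and $w$. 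Applying Theorem \ref{EM-d} to this $q$ with $x=0$ and multiplying by $N^{-d}$ gives
\begin{align*}
N^{-d}\sum_n \omega_{p+\mathcal{P}}(N^{-1}n)f(N^{-1}n)
&= \det(M)\!\!\!\sum_{\substack{V,\,I,\,J\sqsubseteq I\\|J|\leq w}}\!\!\!\!N^{-|I|-|J|}\,\langle \mu(V,I,J),f(p+M\cdot)\rangle\,\mathfrak{B}_{J+I,(MD_V)^t}(0) \\
&\quad + O(N^{-w-1}),
\end{align*}
where the bound on $\mathcal{R}_w$ in Theorem \ref{EM-d} yields $N^{-d}|\mathcal{R}_w|=O(N^{-w-1})$, and the summands of the principal sum with $|I|+|J|\geq w+1$ are themselves of order $N^{-w-1}$ and absorbed into the remainder.

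The second step is the parity cancellation that eliminates every term with $|I|+|J|$ odd, leaving exactly the even powers $N^{-2k}$. This I would read off from the Fourier expansion provided by Lemma \ref{Lemma Fourier multi Bernoulli}: for $K=J+I$, the constraint $J\sqsubseteq I$ forces the indicator of $K$ to coincide with $I$, hence
$$\mathfrak{B}_{K,L}(0)=\lim_{\varepsilon\to 0+}(-1)^{|I|}\sum_{n\in\Delta(I,L)}\widehat{\varphi}(\varepsilon n)\,(2\pi iLn)^{-K}.$$
The set $\Delta(I,L)$ is symmetric under $n\mapsto -n$, $\widehat{\varphi}$ is even, and $(2\pi iL(-n))^{K}=(-1)^{|K|}(2\pi iLn)^K$; pairing each $n$ with $-n$ therefore yields $\mathfrak{B}_{K,L}(0)=(-1)^{|K|}\mathfrak{B}_{K,L}(0)$, so $\mathfrak{B}_{K,L}(0)=0$ whenever $|K|=|I|+|J|$ is odd. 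Grouping the surviving contributions according to $2k=|I|+|J|$ produces exactly the coefficients $\gamma_k$ asserted in the statement.

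Finally, it remains to identify the $k=0$ contribution with $\int_{p+\mathcal{P}}f$. When $|I|+|J|=0$ one must have $I=J=0$, and inspection of Definition \ref{Def mu} shows that $\mu(V,0,0)$ vanishes unless every $v_h=1$; for $V=(1,\dots,1)$ the functional is simply integration over $S_d$, while $D_V$ is the identity matrix, so $(MD_V)^t=M^t$. The function $B_{0,M^t}(x)=|\det M|^{-1}\chi_{M[0,1)^d}(x)$ is the normalized indicator of a fundamental domain for the index-$|\det M|$ sublattice $M\mathbb{Z}^d$ of $\mathbb{Z}^d$, so its $\mathbb{Z}^d$-periodization is constant equal to $1$ almost everywhere, which gives $\mathfrak{B}_{0,M^t}(0)=1$ after regularization. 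A change of variables $y=p+Mx$ then converts $\det(M)\int_{S_d}f(p+Mx)\,dx$ into $\int_{p+\mathcal{P}}f(x)\,dx$, completing the identification of the leading term. The only genuinely non-routine step in the whole argument is the parity cancellation, but with Lemma \ref{Lemma Fourier multi Bernoulli} in hand this reduces to a one-line $n\mapsto -n$ symmetry, so I expect no serious obstacle.
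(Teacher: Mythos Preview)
Your proof is correct and follows essentially the same route as the paper: apply Theorem \ref{EM-d} with $\tau=N$ so that $q_{\tau,M}(x)=f(p+Mx)$ is independent of $N$, use the $n\mapsto -n$ symmetry in Lemma \ref{Lemma Fourier multi Bernoulli} to kill the odd-degree Bernoulli values at $0$, and identify the $I=J=0$ term with the integral via Definition \ref{Def mu}. The only cosmetic difference is that you absorb the translation by $p$ at the outset through the substitution $n=m+Np$, whereas the paper first treats $p=0$ and then reduces the general case to it at the end; the two are equivalent.
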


\begin{proof}
Assume first $p=0$. Since%
\[
N^{-d}\sum_{n\in\mathbb{Z}^{d}}\omega_{\mathcal{P}}(N^{-1}n)f(N^{-1}%
n)=N^{-d}\sum_{n\in\mathbb{Z}^{d}}\omega_{N\mathcal{P}}(n)f(N^{-1}n),
\]
we apply Theorem \ref{EM-d} with $\tau=N$ to the function $q_{N}%
(x)=f(N^{-1}x)$. Let $g_{N,M}(x)=q_{N}(NMx)$, then%
\begin{align*}
&  N^{-d}\sum_{n\in\mathbb{Z}^{d}}\omega_{N\mathcal{P}}(n)f(N^{-1}n)\\
=  &  \det(M)\sum_{V\in\{1,2\}^{d}}\sum_{I\in\{0,1\}^{d}}\sum_{|J|\leqslant
w,J\sqsubseteq I}N^{-|J|-|I|}\left\langle \mu(V,I,J),g_{N,M}\right\rangle
\mathfrak{B}_{J+I,(MD_{V})^{t}}(0)\\
&  \quad+N^{-d}\mathcal{R}_{w}(0)
\end{align*}
with
\[
\left\vert N^{-d}\mathcal{R}_{w}(x)\right\vert \leqslant cN^{-w-1}%
\det(M)(2^{d-2}\pi^{-1}+\delta)^{w+1}\sup_{w-d+2\leqslant|\alpha|\leqslant
w+1}\sup_{x\in S_{d}}\left\vert \frac{\partial^{\alpha}g_{N,M}}{\partial
x^{\alpha}}(x)\right\vert.
\]
Since $g_{N,M}(x)=f\left(  Mx\right)  $, then%
\begin{align*}
&  N^{-d}\sum_{n\in\mathbb{Z}^{d}}\omega_{MS_{d}}(N^{-1}n)f(N^{-1}n)\\
=  &  \det(M)\sum_{V\in\{1,2\}^{d}}\sum_{I\in\{0,1\}^{d}}\sum_{|J|\leqslant
w,J\sqsubseteq I}N^{-|J|-|I|}\left\langle \mu(V,I,J),f\left(  M\cdot\right)
\right\rangle \mathfrak{B}_{J+I,(MD_{V})^{t}}(0)\\
&  +N^{-d}\mathcal{R}_{w}(0).
\end{align*}
Now observe that when $I=(0,\ldots,0)$ and therefore $J=(0,\ldots,0)$ we have
\[
\mu(V,I,J)=0
\]
if $V\neq(1,\ldots,1)$, whereas when $V=(1,\ldots,1)$ we have
\[
\left\langle \mu(V,I,J),f\left(  M\cdot\right)  \right\rangle =\int_{S_{d}%
}f(Mx)dx=\det(M)^{-1}\int_{MS_{d}}f(x)dx.
\]
Also observe that 
\[
\mathfrak{B}_{J+I,(MD_{V})^{t}}(0)=0
\]
whenever $|J|+|I|$ is odd. Indeed, since ${\Delta(I+J,(MD_{V})^{t})}$ is a cone and $\varphi$ is radial,
by Lemma \ref{Lemma Fourier multi Bernoulli} we have%
\begin{align*}
\mathfrak{B}_{I+J,(MD_{V})^{t}}(0)  &  =\lim_{\varepsilon\rightarrow
0+}\left\{  (-1)^{|I|}\sum_{n\in\Delta(I+J,(MD_{V})^{t})}\dfrac{\widehat
{\varphi}(\varepsilon n)}{\left(  2\pi i(MD_{V})^{t}n\right)  ^{I+J}}\right\}
\\
&  =\lim_{\varepsilon\rightarrow0+}\left\{  (-1)^{|I|}\sum_{n\in\Delta\left(
I+J,(MD_{V})^{t}\right)  }\dfrac{\widehat{\varphi}(\varepsilon n)}{(-2\pi
i(MD_{V})^{t}n)^{I+J}}\right\} \\
&  =(-1)^{|I+J|}\mathfrak{B}_{I+J,(MD_{V})^{t}}(0).
\end{align*}
Therefore,
\begin{align*}
&  N^{-d}\sum_{n\in\mathbb{Z}^{d}}f(N^{-1}n)\omega_{MS_{d}}(N^{-1}%
n)=\int_{MS_{d}}f(x)dx\\
&  +\sum_{k\geq1}N^{-2k}\Big(  \det(M)\sum_{V\in\{1,2\}^{d}}\sum
_{I\in\{0,1\}^{d}}\sum_{\substack{|J|\leqslant w,J\sqsubseteq I,\\\text{ }\left\vert
I+J\right\vert =2k}}\left\langle \mu(V,I,J),f\left(  M\cdot\right)
\right\rangle \mathfrak{B}_{J+I,(MD_{V})^{t}}(0)\Big) \\
&  +N^{-d}\mathcal{R}_{w}(0)\\
&  =\int_{MS_{d}}f(x)dx+\sum_{0<2k\leqslant w}\gamma_{k}N^{-2k}+O(N^{-w-1})
\end{align*}
where
\[
\gamma_{k}=\det(M)\sum_{V\in\{1,2\}^{d}}\sum_{I\in\{0,1\}^{d}}\sum
_{J\sqsubseteq I,\ |I+J|=2k}\left\langle \mu(V,I,J),f\left(  M\cdot\right)
\right\rangle \mathfrak{B}_{J+I,(MD_{V})^{t}}(0).
\]
Now assume $p\neq0$. Then%
\begin{align*}
&  N^{-d}\sum_{n\in\mathbb{Z}^{d}}\omega_{p+\mathcal{P}}(N^{-1}n)f(N^{-1}%
n)=N^{-d}\sum_{n\in\mathbb{Z}^{d}}\omega_{\mathcal{P}}(N^{-1}n-p)f(N^{-1}n)\\
&  =N^{-d}\sum_{n\in\mathbb{Z}^{d}}\omega_{\mathcal{P}}(N^{-1}n)f(N^{-1}n+p).
\end{align*}
Hence, the case $p\neq0$ follows from the case $p=0$ applied to the function
$f(x+p)$.
\end{proof}
Theorem \ref{Thm 1} and Theorem \ref{Thm 2} are now easily deduced.
\begin{proof}
[Proof of Theorem \ref{Thm 1}] We use Theorem
\ref{Thm gammak simplex} and the additivity of $S_{N}(f,\mathcal{P})$ with
respect to $\mathcal{P}$.
\end{proof}

\begin{proof}
[Proof of Theorem \ref{Thm 2}]By Theorem \ref{Thm 1},%
\[
S_{N}\left(  f,\mathcal{P}\right)  =\int_{\mathcal{P}}f\left(  x\right)
dx+\sum_{0<k\leqslant w/2}\gamma_{k}N^{-2k}+O\left(  N^{-w-1}\right)  .
\]

Then%
\begin{align*}
&  \sum_{0\leqslant j\leqslant w/2}c_{j}S_{2^{j}N}( f,\mathcal{P})\\
&  =\bigg(  \sum_{0\leqslant j\leqslant w/2}c_{j}\bigg)  \int_{\mathcal{P}}f(
x) dx+\sum_{0<k\leqslant w/2}\gamma_{k}N^{-2k}\bigg(  \sum_{0\leqslant
j\leqslant w/2}c_{j}2^{-2kj}\bigg)  +O( N^{-w-1})
\end{align*}
and the conclusion follows since the Vandermonde system is solvable.
\end{proof}

\section{Appendix A: Some basic facts on the harmonic analysis on commutative
groups}

\label{section-appendix}

The following results on the harmonic analysis on groups, subgroups and
quotient spaces are well known (see \cite[Section 2.7]{Rudin}). We include the
case of the torus for the reader's convenience.

\begin{definition}
Let $\mathcal{H}$ be a subgroup of $\mathbb{Z}^{d}$. The annihilator of
$\mathcal{H}$ is the compact subgroup $\mathcal{H}^{\perp}$ of $\mathbb{T}%
^{d}$ given by%
\[
\mathcal{H}^{\perp}=\left\{  t\in\mathbb{T}^{d}:\forall h\in\mathcal{H}%
,\ t\cdot h\in\mathbb{Z}\right\}  =\left\{  h\in\mathbb{Z}^{d}:\forall
t\in\mathcal{H}^{\perp}\text{, }e^{2\pi it\cdot h}=1\right\}  .
\]

\end{definition}

\begin{lemma}
\label{Dual anni}We have%
\[
\mathcal{H}=\left\{  h\in\mathbb{Z}^{d}:\forall t\in\mathcal{H}^{\perp}\text{,
}t\cdot h\in\mathbb{Z}\right\}  .
\]

\end{lemma}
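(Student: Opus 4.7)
The inclusion $\mathcal{H} \subseteq \{h \in \mathbb{Z}^d : \forall t \in \mathcal{H}^\perp,\ t \cdot h \in \mathbb{Z}\}$ is immediate from the definition of $\mathcal{H}^\perp$, so the content of the lemma is the reverse inclusion. My plan is to reduce to a diagonal model via Smith normal form and then verify the statement coordinate-wise.

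Since $\mathcal{H}$ is a subgroup of the free abelian group $\mathbb{Z}^d$, the structure theorem (Smith normal form for the inclusion map) yields a $T \in GL(d,\mathbb{Z})$, an integer $r \leqslant d$, and positive integers $d_1, \ldots, d_r$ such that
\[
T\mathcal{H} = d_1\mathbb{Z} \times \cdots \times d_r\mathbb{Z} \times \{0\}^{d-r}.
\]
The pairing $(t,h) \mapsto t\cdot h$ modulo $\mathbb{Z}$ is $GL(d,\mathbb{Z})$-equivariant: if we substitute $h \mapsto Th$ and $t \mapsto (T^{-1})^t t$, then $t \cdot h$ is unchanged, $\mathcal{H}$ is replaced by $T\mathcal{H}$, and $\mathcal{H}^\perp$ is replaced by $(T\mathcal{H})^\perp$. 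The statement is therefore invariant under this substitution, so we may assume that $\mathcal{H}$ itself has the diagonal form above.

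In this normalized situation one has
\[
\mathcal{H}^\perp = \left\{ t \in \mathbb{T}^d : d_i t_i \in \mathbb{Z} \text{ for } 1 \leqslant i \leqslant r \right\},
\]
with no constraint on $t_{r+1}, \ldots, t_d$. Suppose $h \in \mathbb{Z}^d$ satisfies $t \cdot h \in \mathbb{Z}$ for every such $t$. Testing against $t = s\, e_i$ with $i > r$ and arbitrary $s \in \mathbb{R}/\mathbb{Z}$ forces $h_i = 0$; testing against $t = (1/d_i)\, e_i$ with $i \leqslant r$ forces $d_i \mid h_i$. Hence $h \in \mathcal{H}$.

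The only non-routine point is the reduction step, i.e.\ checking that passing to $T\mathcal{H}$ changes $\mathcal{H}^\perp$ exactly to $(T\mathcal{H})^\perp$ via $t \mapsto (T^{-1})^t t$; once this is in hand, the verification in the diagonal model is elementary. I do not anticipate any substantive obstacle beyond quoting Smith normal form.
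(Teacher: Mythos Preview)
Your proof is correct. The reduction via Smith normal form is sound: since $T\in GL(d,\mathbb{Z})$, the matrix $(T^{-1})^t$ also lies in $GL(d,\mathbb{Z})$ and hence induces an automorphism of $\mathbb{T}^d$, so the substitution $t\mapsto (T^{-1})^t t$ really does carry $\mathcal{H}^\perp$ bijectively onto $(T\mathcal{H})^\perp$ while preserving the pairing. The diagonal case is then dispatched exactly as you say.

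The paper's argument is different in execution, though similar in spirit. Rather than diagonalizing $\mathcal{H}$ via Smith normal form, the paper writes $\mathcal{H}=B\mathbb{Z}^q$ with $B$ a $d\times q$ integer matrix of full rank $q$, permutes coordinates so that $B^t=[C\ D]$ with $C$ an invertible $q\times q$ block, and then parametrizes $\mathcal{H}^\perp$ explicitly as $\{(C^{-1}(z-Dt_2),t_2):z\in\mathbb{Z}^q,\ t_2\in\mathbb{T}^{d-q}\}$. Testing a candidate $m=(m_1,m_2)$ first with $t_2=0$ yields $m_1=C^t h$ for some $h\in\mathbb{Z}^q$, and then varying $t_2$ forces $m_2=D^t h$, so $m=Bh\in\mathcal{H}$. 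Your approach invokes a slightly heavier structural result (adapted bases/Smith form) but in return the endgame is cleaner and coordinate-wise; the paper stays closer to bare linear algebra at the cost of a more hands-on computation. Either is perfectly adequate for this standard duality fact.
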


This is a particular case of Lemma 2.1.3 in \cite{Rudin}. The following is a
direct elementary proof.

\begin{proof}
Let $h\in\mathcal{H}$, then by definition $t\cdot h\in\mathbb{Z}$ for every
$t\in\mathcal{H}^{\perp}$. To show the converse observe that since every
subgroup of $\mathbb{Z}^{d}$ is a lattice, there exists an integer $d\times q$
matrix $B$, with $q\leqslant d$, of maximal rank such that%
\[
\mathcal{H}=\left\{  Bz:z\in\mathbb{Z}^{q}\right\}  .
\]
Then%
\begin{align*}
\mathcal{H}^{\perp}  &  =\left\{  t\in\mathbb{T}^{d}:\forall z\in
\mathbb{Z}^{q}\text{, }Bz\cdot t\in\mathbb{Z}\right\}  =\left\{
t\in\mathbb{T}^{d}:\forall z\in\mathbb{Z}^{q}\text{, }z\cdot B^{t}%
t\in\mathbb{Z}\right\} \\
&  =\left\{  t\in\mathbb{T}^{d}:B^{t}t\in\mathbb{Z}^{q}\right\}  .
\end{align*}
Since $B$ has rank $q$ we can assume without loss of generality that there
exists a $q\times q$ invertible matrix $C$ and a $q\times\left(  d-q\right)  $
matrix $D$ such that $B^{t}=\left[
\begin{array}
[c]{cc}%
C & D
\end{array}
\right]  $. Hence for $t=\left(  t_{1},t_{2}\right)  $,
\[
B^{t}t=Ct_{1}+Dt_{2}=z\in\mathbb{Z}^{q}.
\]
It follows that%
\[
\mathcal{H}^{\perp}=\left\{  \left(  C^{-1}(z-Dt_{2}),t_{2}\right)
:z\in\mathbb{Z}^{q},t_{2}\in\mathbb{T}^{d-q}\right\}  .
\]
Now, let $m\in\mathbb{Z}^{d}$ such that for every $t\in\mathcal{H}^{\perp}$ we
have $m\cdot t\in\mathbb{Z}$. Then, if $m=\left(  m_{1},m_{2}\right)  $, for
every $z\in\mathbb{Z}^{q},t_{2}\in\mathbb{T}^{d-q}$ we have%
\begin{equation}
\left(  m_{1},m_{2}\right)  \cdot\left(  C^{-1}(z-Dt_{2}),t_{2}\right)
=m_{1}\cdot C^{-1}z-m_{1}\cdot C^{-1}Dt_{2}+t_{2}\cdot m_{2}\in\mathbb{Z}.
\label{1}%
\end{equation}
Let $t_{2}=0$. Then $m_{1}\cdot C^{-1}z\in\mathbb{Z}$ and hence $\left(
C^{-1}\right)  ^{t}m_{1}\cdot z\in\mathbb{Z}$ for every $z\in\mathbb{Z}^{q}$.
Therefore $\left(  C^{-1}\right)  ^{t}m_{1}\in\mathbb{Z}^{q}$. It follows that
$m_{1}=C^{t}h$ for some $h\in\mathbb{Z}^{q}$. From (\ref{1}) we obtain that
\begin{align*}
&  C^{t}h\cdot C^{-1}z-C^{t}h\cdot C^{-1}Dt_{2}+t_{2}\cdot m_{2}\\
&  =h\cdot z-h\cdot Dt_{2}+t_{2}\cdot m_{2}%
\end{align*}
is an integer for every $t_{2}\in\mathbb{T}^{d-q}$. It follows that for every
$t_{2}\in\mathbb{T}^{d-q}$ we have%
\[
(m_{2}-D^{t}h)\cdot t_{2}\in\mathbb{Z},
\]
this implies that $m_{2}=D^{t}h$ and therefore $m=Bh\in\mathcal{H}$.
\end{proof}

Let $d\mu$ be the Haar measure on $\mathcal{H}^{\perp}$. Since $\mathcal{H}%
^{\perp}$ is compact we can assume that $\left\vert d\mu\right\vert =1$.

\begin{lemma}
\label{Lemma int Hperp}With the normalized Haar measure $d\mu$ on
$\mathcal{H}^{\perp}$, for every $m\in\mathbb{Z}^{d}$ we have%
\[
\int_{\mathcal{H}^{\perp}}e^{2\pi im\cdot t}d\mu( t) =
\begin{cases}
1 & m\in\mathcal{H},\\
0 & m\not \in \mathcal{H}.
\end{cases}
\]

\end{lemma}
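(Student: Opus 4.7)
The plan is to treat the two cases in the definition of $\mathcal{H}^{\perp}$ separately, using only (a) the definition of $\mathcal{H}^{\perp}$, (b) Lemma \ref{Dual anni}, and (c) translation invariance of the Haar measure $d\mu$ on the compact group $\mathcal{H}^{\perp}$. This is the standard character-orthogonality argument in the Pontryagin-dual setting, restricted to the torus.

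First, if $m\in\mathcal{H}$, then by the very definition of $\mathcal{H}^{\perp}$, for every $t\in\mathcal{H}^{\perp}$ one has $m\cdot t\in\mathbb{Z}$, so $e^{2\pi i m\cdot t}=1$ pointwise on $\mathcal{H}^{\perp}$; integrating the constant function $1$ against the normalized Haar measure gives the value $1$.

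Second, suppose $m\notin\mathcal{H}$. By Lemma \ref{Dual anni}, the characterization
\[
\mathcal{H}=\{h\in\mathbb{Z}^{d}:\forall t\in\mathcal{H}^{\perp},\ t\cdot h\in\mathbb{Z}\}
\]
forces the existence of some $t_{0}\in\mathcal{H}^{\perp}$ with $m\cdot t_{0}\notin\mathbb{Z}$, i.e.\ $e^{2\pi i m\cdot t_{0}}\neq 1$. Now use translation invariance of $d\mu$ by the element $t_{0}\in\mathcal{H}^{\perp}$:
\[
\int_{\mathcal{H}^{\perp}}e^{2\pi i m\cdot t}\,d\mu(t)=\int_{\mathcal{H}^{\perp}}e^{2\pi i m\cdot(t+t_{0})}\,d\mu(t)=e^{2\pi i m\cdot t_{0}}\int_{\mathcal{H}^{\perp}}e^{2\pi i m\cdot t}\,d\mu(t).
\]
Hence $(1-e^{2\pi i m\cdot t_{0}})\int_{\mathcal{H}^{\perp}}e^{2\pi i m\cdot t}\,d\mu(t)=0$, and since the first factor is nonzero, the integral vanishes.

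There is essentially no obstacle: the only nontrivial input is Lemma \ref{Dual anni} (the biduality statement $\mathcal{H}^{\perp\perp}=\mathcal{H}$), which was already proved just above, and the translation invariance of the Haar measure on the compact subgroup $\mathcal{H}^{\perp}\subseteq\mathbb{T}^{d}$, which is part of its defining property. The slight subtlety worth noting is that $t_{0}$ must genuinely lie in $\mathcal{H}^{\perp}$ so that $t\mapsto t+t_{0}$ preserves the domain of integration; this is exactly what Lemma \ref{Dual anni} provides.
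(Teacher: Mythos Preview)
Your proof is correct and follows essentially the same argument as the paper: handle $m\in\mathcal{H}$ directly from the definition of $\mathcal{H}^{\perp}$, and for $m\notin\mathcal{H}$ invoke Lemma~\ref{Dual anni} to obtain $t_{0}\in\mathcal{H}^{\perp}$ with $e^{2\pi i m\cdot t_{0}}\neq 1$, then use translation invariance of the Haar measure to force the integral to vanish.
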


\begin{proof}
The case $m\in\mathcal{H}$ is immediate since $e^{2\pi im\cdot t}=1$ for every
$t\in\mathcal{H}^{\perp}$. Let $m\not \in \mathcal{H}$. By Lemma
\ref{Dual anni} there exists $t_{0}\in\mathcal{H}^{\perp}$ such that $e^{2\pi
im\cdot t_{0}}\neq1$. By the invariance of the Haar measure we have%
\[
\int_{\mathcal{H}^{\perp}}e^{2\pi im\cdot t}d\mu(t)=\int_{\mathcal{H}^{\perp}%
}e^{2\pi im\cdot(t_{0}+t)}d\mu(t)=e^{2\pi im\cdot t_{0}}\int_{\mathcal{H}%
^{\perp}}e^{2\pi im\cdot t}d\mu(t).
\]
Hence
\[
\int_{\mathcal{H}^{\perp}}e^{2\pi im\cdot t}d\mu(t)=0.
\]

\end{proof}

\begin{lemma}
\label{annichilatore}Let $\mathcal{H}$ be a subgroup of $\mathbb{Z}^{d}$ and
let $d\mu$ be the normalized Haar measure on the annihilator $\mathcal{H}%
^{\perp}$. In particular $d\mu$ is a probability measure on $\mathbb{T}^{d}$.
Let $f\in L^{1}(\mathbb{T}^{d})$ and let $g(s)=\mu\ast f(s)$, that is%
\[
g(s)=\int_{\mathcal{H}^{\perp}}f(s-t)d\mu(t).
\]
Then,

(i) $\left\Vert g\right\Vert _{\infty}\leqslant\left\Vert f\right\Vert
_{\infty}$;

(ii)%
\[
\widehat{g}(m)=%
\begin{cases}
\widehat{f}(m) & m\in\mathcal{H},\\
0 & m\not \in \mathcal{H}.
\end{cases}
\]

\end{lemma}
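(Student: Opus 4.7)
The plan is to establish both parts by direct computation, since all the machinery has already been set up in the preceding lemmas.

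For part $(i)$, I would simply apply the triangle inequality under the integral sign. Since $d\mu$ is a probability measure supported on $\mathcal{H}^{\perp}\subseteq\mathbb{T}^{d}$, for every $s\in\mathbb{T}^{d}$ one has
\[
|g(s)|\leqslant\int_{\mathcal{H}^{\perp}}|f(s-t)|\,d\mu(t)\leqslant\Vert f\Vert_{\infty}\int_{\mathcal{H}^{\perp}}d\mu(t)=\Vert f\Vert_{\infty}.
\]
This is essentially the standard fact that convolution with a probability measure is an $L^{\infty}$ contraction; no subtlety is hidden here.

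For part $(ii)$, I would compute the Fourier coefficients of $g$ directly. First I would apply Fubini's theorem (which is justified because $d\mu$ is a finite measure and $f\in L^{1}(\mathbb{T}^{d})$) to interchange the integration in $s$ and in $t$, obtaining
\[
\widehat{g}(m)=\int_{\mathbb{T}^{d}}\!\int_{\mathcal{H}^{\perp}}\!f(s-t)\,d\mu(t)\,e^{-2\pi im\cdot s}\,ds=\int_{\mathcal{H}^{\perp}}\!\!\Big(\int_{\mathbb{T}^{d}}\!f(s-t)e^{-2\pi im\cdot s}ds\Big)d\mu(t).
\]
The inner integral, after the change of variables $s\mapsto s+t$, is $\widehat{f}(m)\,e^{-2\pi im\cdot t}$, so
\[
\widehat{g}(m)=\widehat{f}(m)\int_{\mathcal{H}^{\perp}}e^{-2\pi im\cdot t}\,d\mu(t).
\]

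Finally I would invoke Lemma \ref{Lemma int Hperp} to evaluate the remaining integral. That lemma is stated for $e^{2\pi im\cdot t}$, but since $\mathcal{H}$ is a subgroup of $\mathbb{Z}^{d}$ we have $m\in\mathcal{H}\iff -m\in\mathcal{H}$, so replacing $m$ by $-m$ in the statement gives
\[
\int_{\mathcal{H}^{\perp}}e^{-2\pi im\cdot t}\,d\mu(t)=\begin{cases}1 & m\in\mathcal{H},\\ 0 & m\notin\mathcal{H}.\end{cases}
\]
This yields $(ii)$ immediately. There is no real obstacle in this argument; the only point requiring mild care is the application of Fubini, and the fact that Lemma \ref{Lemma int Hperp} applies equally to the complex conjugate character because annihilators of subgroups are themselves groups.
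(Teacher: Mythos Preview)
Your proof is correct and follows essentially the same approach as the paper: both argue that (i) is the trivial $L^\infty$ contraction of convolution with a probability measure, and both obtain (ii) from $\widehat{g}(m)=\widehat{\mu}(m)\widehat{f}(m)$ together with Lemma \ref{Lemma int Hperp}. You simply spell out the Fubini step and the sign remark more explicitly than the paper does.
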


\begin{proof}
$(i)$ follows from the fact that the convolution with a probability measure is
an operator with norm $1$ on $L^{\infty}\left(  \mathbb{T}^{d}\right)  $.
$(ii)$ follows from the fact that $\widehat{\mu\ast f}(m)=\widehat{\mu
}(m)\widehat{f}(m)$ and%
\[
\widehat{\mu}(m)=\int_{\mathcal{H}^{\perp}}e^{-2\pi im\cdot t}d\mu(t)=%
\begin{cases}
1 & m\in\mathcal{H},\\
0 & m\not \in \mathcal{H}.
\end{cases}
\]

\end{proof}

\section{Appendix B: Bernoulli polynomials and Lerch Zeta functions}

\label{section-B}

Here we give a different description of the functions $\mathfrak{B}_{J,L}(x)$.
Such functions were defined (Definition \ref{def:Bernoulli poly}) starting
with a product of Bernoulli polynomials, restricting this product to the unit
cube, composing it with an affine transformation and finally periodizing. One
may ask if these operations commute and if these functions can be obtained as
a linear combination of affine transformation of the periodic multivariate
Bernoulli polynomials $\mathfrak{B}_{J,\text{Id}}$ (here Id denotes the
identity matrix).

We start from the Fourier expansion%
\[
\mathfrak{B}_{J,L}(x)=\lim_{\varepsilon\rightarrow0+}(-1)^{|I|}\sum
_{n\in\Delta(I,L)}\widehat{\varphi}(\varepsilon n)\frac{e^{2\pi inx}}{(2\pi
iLn)^{J}}%
\]
with the usual conventions on the notation (Lemma
\ref{Lemma Fourier multi Bernoulli}). In particular, the multi-index $I=(i_{1},\ldots,i_{d})$ is such that $i_{k}=0$ if $j_{k}=0$ and $i_{k}=1$ if
$j_{k}>0$. Recall that in the points of
discontinuity the definition of $\mathfrak{B}_{J,L}$ is by regularization and
that $L\in GL(d,\mathbb{Z})$. Assume now that $x$ is a point of continuity, so
that the mollifier $\varphi$ may not be taken to be necessarily radial. More
precisely, we may set $\varphi(x)=|\det L|^{-1}\psi\left(  (L^{T}%
)^{-1}x\right)  \ $where%
\[
\psi(x)=\eta(x_{1})\ldots\eta(x_{d})
\]
and $\eta$ is a non-negative smooth function with compact support and integral
one. In particular
\[
\widehat{\varphi}(\xi)=\widehat{\psi}(L\xi).
\]
Since $L$ has integer entries, $L$ has a unique (column) Hermite normal form
$H$, that is, $L=HU,$ where $H$ is a lower triangular matrix with positive
coefficients on the diagonal and such that all the other coefficients are
nonnegative and smaller than the diagonal coefficient in the same row, whereas
$U$ is a unimodular integer matrix. The invertibility of the linear map
$n\mapsto Un$ in $\mathbb{Z}^{d}$ immediately implies that the lattice
$L\mathbb{Z}^{d}$ coincides with the lattice $H\mathbb{Z}^{d}$, and more
specifically%
\[
\left\{  Ln:n\in\Delta(I,L)\right\}  =\left\{  HUn:n\in\Delta(I,L)\right\}
=\left\{  Hm:m\in\Delta(I,H)\right\}  .
\]
Thus, setting $y=(L^{-1})^{T}x$, one obtains%
\begin{align*}
\mathfrak{B}_{J,L}(x) &  =\lim_{\varepsilon\rightarrow0+}(-1)^{|I|}\sum
_{n\in\Delta(I,L)}\widehat{\varphi}(\varepsilon n)\frac{e^{2\pi in\cdot x}%
}{(2\pi iLn)^{J}}\\
&  =\lim_{\varepsilon\rightarrow0+}(-1)^{|I|}\sum_{n\in\Delta(I,L)}%
\widehat{\varphi}(\varepsilon L^{-1}Ln)\frac{e^{2\pi iLn\cdot y}}{(2\pi
iLn)^{J}}\\
&  =\lim_{\varepsilon\rightarrow0+}(-1)^{|I|}\sum_{m\in\Delta(I,H)}%
\widehat{\varphi}(\varepsilon L^{-1}Hm)\frac{e^{2\pi iHm\cdot y}}{(2\pi
iHm)^{J}}\\
&  =\lim_{\varepsilon\rightarrow0+}(-1)^{|I|}\sum_{m\in\Delta(I,H)}%
\widehat{\psi}(\varepsilon Hm)\frac{e^{2\pi iHm\cdot y}}{(2\pi iHm)^{J}}.
\end{align*}
Let now $H=(h_{j,k})$, $k_{j}=\prod_{s=j}^{d}h_{s,s}$ for all $j=1,\ldots,d$,
and set $K=\text{diag}(k_{1},\ldots,k_{d})$ to be the corresponding diagonal
matrix. We claim that $K\mathbb{Z}^{d}\subseteq H\mathbb{Z}^{d}$. Indeed, it
suffices to show that all vectors $Ke_{j}$ belong to $H\mathbb{Z}^{d}$.
Obviously, $Ke_{d}=h_{d,d}e_{d}=He_{d}$. By induction, assuming that
$Ke_{s}\in H\mathbb{Z}^{d}$ for all $s=j+1,j+2,\ldots,d$, let us show that
$Ke_{j}\in H\mathbb{Z}^{d}$. We have%
\begin{align*}
Ke_{j} &  =h_{j,j}\cdots h_{d,d}e_{j}\\
&  =\sum_{s=j}^{d}h_{s,j}h_{j+1,j+1}\ldots h_{d,d}e_{s}-\sum_{s=j+1}%
^{d}h_{s,j}h_{j+1,j+1}\ldots h_{d,d}e_{s}\\
&  =H(h_{j+1,j+1}\ldots h_{d,d}e_{j})-\sum_{s=j+1}^{d}h_{s,j}h_{j+1,j+1}%
...\left(  h_{s,s}\ldots h_{d,d}e_{s}\right)  \\
&  =H(h_{j+1,j+1}\ldots h_{d,d}e_{j})-\sum_{s=j+1}^{d}h_{s,j}h_{j+1,j+1}%
...h_{s-1,s-1}Ke_{s}\in H\mathbb{Z}^{d}%
\end{align*}
and the claim is proved. Observe that there is a finite number of different
integer translates of $K\mathbb{Z}^{d}$ (precisely $k_{1}k_{2}\ldots k_{d}$).
Take any point of $H\mathbb{Z}^{d}$ which is not in $K\mathbb{Z}^{d}$, say
$v^{(1)}$. By linearity it follows that $v^{(1)}+K\mathbb{Z}^{d}$ is contained
in $H\mathbb{Z}^{d}$ and is disjoint from $K\mathbb{Z}^{d}$. Take again a
second vector in $H\mathbb{Z}^{d}$ which is not in $K\mathbb{Z}^{d}%
\cup(v^{(1)}+K\mathbb{Z}^{d}),$ say $v^{(2)}$. Then $v^{(2)}+K\mathbb{Z}^{d}$
is contained in $H\mathbb{Z}^{d}$ (and is disjoint from $K\mathbb{Z}^{d}%
\cup(v^{(1)}+K\mathbb{Z}^{d})$). We can iterate this procedure until we
exhaust all of $H\mathbb{Z}^{d}$. In other words, we have%
\[
L\mathbb{Z}^{d}=H\mathbb{Z}^{d}=\bigcup\limits_{\ell=1}^{\mathcal{L}}%
(v^{(\ell)}+K\mathbb{Z}^{d})
\]
where the union is a disjoint union. Thus, recalling that $y=(L^{-1})^{T}x$,
\begin{align*}
&  \mathfrak{B}_{J,L}(x)=\lim_{\varepsilon\rightarrow0+}(-1)^{|I|}\sum
_{m\in\Delta(I,H)}\widehat{\psi}(\varepsilon Hm)\frac{e^{2\pi iHmy}}{(2\pi
iHm)^{J}}\\
&  =\lim_{\varepsilon\rightarrow0+}(-1)^{|I|}\sum_{\ell=1}^{\mathcal{L}}%
\sum_{\substack{m\in\mathbb{Z}^{d}\\v_{s}^{(\ell)}+(Km)_{s}=0\text{ iff }%
i_{s}=0}}\widehat{\psi}\left(  \varepsilon(v^{(\ell)}+Km)\right)
\frac{e^{2\pi i(v^{(\ell)}+Km)y}}{\left(  2\pi i(v^{(\ell)}+Km)\right)  ^{J}%
}\\
&  =\lim_{\varepsilon\rightarrow0+}(-1)^{|I|}\sum_{\ell=1}^{\mathcal{L}}%
\prod_{\substack{s=1\\\text{s.t. }i_{s}=1}}^{d}e^{2\pi iv_{s}^{(\ell)}y_{s}%
}\sum_{\substack{m_{s}\in\mathbb{Z}\\m_{s}\neq-\frac{v_{s}^{(\ell)}}{k_{s}}%
}}\widehat{\eta}\left(  \varepsilon(v_{s}^{(\ell)}+k_{s}m_{s})\right)
\frac{e^{2\pi ik_{s}m_{s}y_{s}}}{\left(  2\pi i(v_{s}^{(\ell)}+k_{s}%
m_{s})\right)  ^{j_{s}}}\\
&  =\sum_{\ell=1}^{\mathcal{L}}\prod_{\substack{s=1\\\text{s.t. }i_{s}=1}%
}^{d}(2\pi ik_{s})^{-j_{s}}e^{2\pi iv_{s}^{(\ell)}y_{s}}\times\\
&  \quad\times\lim_{\varepsilon\rightarrow0+}(-1)\sum_{\substack{m_{s}%
\in\mathbb{Z}\\m_{s}\neq-\frac{v_{s}^{(\ell)}}{k_{s}}}}\widehat{\eta}\left(
\varepsilon k_{s}\left(  \frac{v_{s}^{(\ell)}}{k_{s}}+m_{s}\right)  \right)
\frac{e^{2\pi im_{s}(k_{s}y_{s})}}{\left(  \frac{v_{s}^{(\ell)}}{k_{s}}%
+m_{s}\right)  ^{j_{s}}}\\
&  =\sum_{\ell=1}^{\mathcal{L}}\prod_{\substack{s=1\\\text{s.t. }i_{s}=1}%
}^{d}e^{2\pi iv_{s}^{(\ell)}y_{s}}\left(  2\pi ik_{s}\right)  ^{-j_{s}%
}L_{j_{s}}\left(  k_{s}y_{s},\frac{v_{s}^{(\ell)}}{k_{s}}\right)
\end{align*}
where for $j\geq1$ we set
\[
L_{j}(x,r)=-\lim_{\varepsilon\rightarrow0+}\sum_{n\in\mathbb{Z\setminus
}\{-r\}}\widehat{\eta}\left(  \varepsilon(n+r)\right)  \frac{e^{2\pi inx}%
}{(n+r)^{j}}.
\]
Observe that for $r\in\mathbb{Z}$
\[
L_{j}(x,r)=e^{-2\pi irx}(2\pi i)^{j}B_{j}(x)
\]
where $B_{j}(x)$ is the $j$-th Bernoulli polynomial, whereas when
$r\notin\mathbb{Z}$ the function $L_{j}(x,r)$ is related to the Lerch Zeta
function
\[
\mathfrak{L}(x,j,r)=\sum_{n=0}^{+\infty}\frac{e^{2\pi inx}}{(n+r)^{j}}.
\]
Indeed, formally,%
\[
L_{j}(x,r)=-\sum_{\substack{n\in\mathbb{Z}\\n+r\neq0}}\frac{e^{2\pi inx}%
}{(n+r)^{j}}=-\mathfrak{L}(x,j,r)-(-1)^{j}\mathfrak{L}(-x,j,-r)+r^{-j}.
\]

Moreover, when $r=p/q$ is rational, it is not difficult to write $L_{j}(x,r)$
in terms of periodic Bernoulli polynomials. Indeed, one can verify that for
every periodic integrable function $f\left(  x\right)  $,%
\[
\frac{1}{q}\sum_{a=0}^{q-1}e^{-2\pi i\frac{p}{q}\left(  x+a\right)  }f\left(
\frac{x+a}{q}\right)  =\sum_{n\in\mathbb{Z}}\widehat{f}\left(  nq+p\right)
e^{2\pi inx}.
\]
Therefore, with $f\left(  x\right)  =B_{j}\left(  x\right)  $,%
\begin{align*}
L_{j}&(x,p/q)  =-\sum_{\substack{n\in\mathbb{Z}\\n+p/q\neq0}}\frac{e^{2\pi
inx}}{(n+p/q)^{j}}=\left(  2\pi iq\right)  ^{j}\sum_{\substack{n\in
\mathbb{Z}\\nq+p\neq0}}\frac{-1}{\left(  2\pi i(nq+p)\right)  ^{j}}e^{2\pi
inx}\\
&  =\left(  2\pi iq\right)  ^{j}\sum_{n\in\mathbb{Z}}\widehat{B}_{j}\left(
nq+p\right)  e^{2\pi inx}=\left(  2\pi iq\right)  ^{j}e^{-2\pi ix\frac pq}\frac
{1}{q}\sum_{a=0}^{q-1}e^{-2\pi ia\frac pq}B_{j}\left(  \frac{x+a}{q}\right)  .
\end{align*}
Thus,%
\begin{align*}
\mathfrak{B}_{J,L}(x) &  =\sum_{\ell=1}^{\mathcal{L}}\prod
_{\substack{s=1\\\text{s.t. }i_{s}=1}}^{d}e^{2\pi iv_{s}^{(\ell)}y_{s}}\left(
2\pi ik_{s}\right)  ^{-j_{s}}L_{j_{s}}\left(  k_{s}y_{s},\frac{v_{s}^{(\ell)}%
}{k_{s}}\right)  \\
&  =\sum_{\ell=1}^{\mathcal{L}}\prod_{\substack{s=1\\\text{s.t. }i_{s}=1}%
}^{d}\frac{1}{k_{s}}\sum_{a=0}^{k_{s}-1}e^{-2\pi i\frac{v_{s}^{(\ell)}}{k_{s}%
}a}B_{j_{s}}\left(  y_{s}+\frac{a}{k_{s}}\right).
\end{align*}

Now, recalling that $K=\textrm{diag}(k_1,\ldots, k_d)$, if we set $K^I=\textrm{diag}(k_1^{i_1},\ldots, k_d^{i_d})$ so that $k_s$ is replaced with $1$ whenever $i_s=0$, then
\begin{align*}
\mathfrak{B}_{J,L}(x) &  =\sum_{\ell=1}^{\mathcal{L}}\prod_{\substack{s=1\\\text{s.t. }i_{s}=1}%
}^{d}\frac{1}{k_{s}}\sum_{a=0}^{k_{s}-1}e^{-2\pi i\frac{v_{s}^{(\ell)}}{k_{s}%
}a}B_{j_{s}}\left(  y_{s}+\frac{a}{k_{s}}\right)\\
&= \sum_{\ell=1}^{\mathcal{L}}\sum_{a_{1}=0}^{k_{1}^{i_1}-1}\frac{1}{k_{1}^{i_1}}e^{-2\pi
i\frac{v_{1}^{(\ell)}}{k_{1}^{i_1}}a_{1}}B_{j_{1}}\left(  y_{1}+\frac{a_{1}}{k_{1}^{i_1}%
}\right)  \cdots\sum_{a_{d}=0}^{k_{d}^{i_d}-1}\frac{1}{k_{d}^{i_d}}e^{-2\pi i\frac
{v_{d}^{(\ell)}}{k_{d}^{i_d}}a_{d}}B_{j_{d}}\left(  y_{d}+\frac{a_{d}}{k_{d}^{i_d}%
}\right)  \\
&=\sum_{\ell=1}^{\mathcal{L}}\det\left(  K^I\right)  ^{-1}\sum_{0\leqslant
A\leqslant K^I-1}e^{-2\pi i((K^I)^{-1})A\cdot v^{(\ell)}}\mathfrak{B}_{J,\text{Id}%
}\left(  (L^{-1})^{T}x+(K^I)^{-1}A\right),
\end{align*}
where $0\leqslant A=(a_1,\ldots,a_d)\leqslant K^I-1$ means $0\leqslant a_{s}\leqslant k_{s}^{i_s}-1$.

\bibliographystyle{plain}
\bibliography{TrasformataSd-bib}

\end{document}